\newtheorem{theorem}{Theorem}[section]
\newtheorem{conjecture}[theorem]{Conjecture}
\newtheorem{corollary}[theorem]{Corollary}
\newtheorem{lemma}[theorem]{Lemma}
\newtheorem{proposition}[theorem]{Proposition}
\newtheorem{remark}[theorem]{Remark}
\newtheorem{definition}[theorem]{Definition}
\newcommand{\bb}[1]{\mathbb{#1}}
\newcommand{\mc}[1]{\mathcal{#1}}
\newcommand{\mf}[1]{\mathfrak{#1}}
\title[A Casselman-Osborne Theorem for rational Cherednik algebras]{A Casselman-Osborne Theorem
for rational Cherednik algebras}
\author{Jing-Song Huang and Kayue Daniel Wong}
\address[Huang]{Department of Mathematics, Hong Kong University of Science and
technology, Clear Water Bay, Kowloon, Hong Kong SAR, China}
\email{mahuang@ust.hk}
\address[Wong]{Department of Mathematics, Hong Kong University of Science and
technology, Clear Water Bay, Kowloon, Hong Kong SAR, China}
\email{makywong@ust.hk}
\thanks{The authors would like to thank Stephen Griffeth and the anonymous referees for valuable comments on an earlier version of this manuscript. The research described in this paper is supported by grants from
Research Grant Council of HKSAR and National Science Foundation of China.}
\begin{document}

\begin{abstract}
We define Lie algebra cohomology associated with the half-Dirac operators for
representations of rational Cherednik algebras
and show that it has property described in the Casselman-Osborne Theorem
by establishing a version of the Vogan's conjecture for the half-Dirac operators.
Moreover, we study the relationship between Lie algebra cohomology
and Dirac cohomology in analogy of the representations for semisimple Lie algebras.
\end{abstract}

\maketitle

%%%%%%%%%%%%  1 %%%%%%%%%%
\section{Introduction}
%%%%%%%%%%%%%%%%%%%%%%%%
Dirac operator plays a pivotal role in mathematics and  theoretical physics. In representation theory, Dirac operator was used
 for geometric construction of discrete series representations by Parthasarathy \cite{P}, Atiyah and Schmid \cite{AS}.  In later 1990's Vogan \cite{V} formulated a conjecture on the Dirac operator in Lie algebra setting that reveals an interesting algebraic nature of the Dirac operator.  This conjecture was verified by Pand\v zi\'c and the first named author \cite{HP2}.
The Vogan's conjecture has been generalized to Kostant's cubic Dirac operator \cite{Ko3}, as well as various other setting in  affine Lie algebras \cite{KMP},  Lie superalgebras \cite{HP3},  and
in particular to graded affine Hecke algebras \cite{BCT}. More recently, Ciubotaru has extended the definition of Dirac operator and Vogan's conjecture further to Drinfeld's graded Hecke algebras including symplectic reflection algebras \cite{C1}.
The Dirac cohomology $H_D(M)$ of an irreducible Harish-Chandra module $M$ determines the infinitesimal character of $M$.
Determination of Dirac cohomology $H_D(M)$ leads insight to many classical subjects such as Lie algebra cohomology
\cite{HP1}, and it has applications in branching rules \cite{HPZ} and harmonic analysis \cite{H}.

The work of Etingof and Ginzburg on symplectic reflection algebras  \cite{EG}
 has inspired interaction of representation theory with
 algebraic combinatorics, integrable systems and algebraic geometry.
Apart from a finite list of small rank exceptions, there are only two families of symplectic reflection algebras:
 one is associated with real or complex refection groups, namely rational Cherednik algebras and the other is the wreath product.
 The representation theory of rational Cherednik algebras has more analogue of semisimple Lie algebras, and we
 focus on this case.
Let $\mf h$ be a finite dimensional $\bb C$-vector space and $W\subset GL(\mf{h})$ a complex reflection group.
The rational Cherednik algebras ${\bf H}_{t,c}(W,\mf h)$ with parameter  $t\in \bb C$ and $W$-invariant functions $c$ defined on the set of reflections of $W$
 is the symplectic reflection algebras with $W$ acting on $V=\mf h\oplus\mf h^*$ with the naturally defined $W$-invariant symplectic form (cf. Section \ref{sec:halfdirac}).
We denote it by ${\bf H}_{t,c}$ if $\mf h$ and $W$ are clear, and simply by $\bf H$ if both $t$ and $c$ are also fixed.

\smallskip
The purpose of this paper is to study  the $\mf{h}^*$-cohomology $H^{\bullet}(\mf{h}^*,M) := \oplus_i H^i(\mf{h}^*,M)$ and the $\mf{h}$-homology
$H_{\bullet}(\mf{h},M) := \oplus_i H_i(\mf{h},M)$ (see Section \ref{sec:halfdirac} for the definitions) as $W$-modules.  They are
naturally associated with cohomology defined by the half-Dirac operators $D_x$ and $D_y$ with twist
of a genuine character $\chi$ of $\widetilde{W}$ (see Proposition \ref{prop:dpartial}).
We show that they have nice properties in analogue of semisimple Lie algebras obtained in \cite{HPR} and \cite{HPR'},
especially we prove an analogue of the Casselman-Osborne Theorem for semisimple modules and a Hodge decomposition theorem for unitarizable modules.
Moreover, we obtain some results on relations between $H_D(M)$ and Lie algebra cohomology $H^{\bullet}(\mf{u}^*,M)$
and homology $H_{\bullet}(\mf{u},M)$ in analogy of the results on category $\mc{O}^{\mf{p}}$ obtained by Xiao and the first named author \cite{HX}.

\smallskip
We now describe our main results more precisely. Most of necessary definitions are introduced
in Sections 2 and 3. We note that Theorems A to C remain true upon replacing $H^{\bullet}(\mf{h}^*,M)$ with $H_{\bullet}(\mf{h},M)$.
Our first main result is an analogue of the Casselman-Osborne Theorem for  $H^{\bullet}(\mf{h}^*,M)$ and $H_{\bullet}(\mf{h},M)$.

\smallskip
\noindent \textbf{Theorem A (Theorem \ref{cor:wonly}).} \textit{
Let $\mc{B}$ be an abelian subalgebra of ${\bf H} \otimes C(V)$ defined before the statement of Theorem \ref{cor:voganmorphism}.
Suppose $\sigma \otimes \beta$ is an isotypical component in  $H^{\bullet}(\mf{h}^*,M)$, where $\sigma$ is an irreducible $W$-module and $\beta$ is a $\mc{B}$-character. Then \textit{Vogan's morphism}
$$\zeta_d^*:\mathrm{Irr}(\widetilde{W}) \to \mathrm{Spec}\ \mc{B}$$
satisfies the following condition
$$\zeta_d^*(\sigma \otimes \chi) = \beta.$$
Here $\chi: \widetilde{W} \to \bb{C}^{\times}$ is a genuine one-dimensional character of a double cover $p: \widetilde{W} \to W$ of $W$, satisfying $\chi^2(\widetilde{w}) = {\det}_{\mf{h}^*}(p(\widetilde{w}))$.}\\

Then we study the relation between Lie algebra cohomology $H^{\bullet}(\mf{h}^*,M)$ and Dirac cohomology $H_D(M)$ introduced in \cite{C1}.
More precisely, we have the following inclusion of $\widetilde{W}$-module homomorphisms.

\smallskip
\noindent \textbf{Theorem B (Theorem \ref{thm:incprop}).} \textit{
Let $M$ be a ${\bf H}$-module so that $D^2$ acts semisimply on $M \otimes S$, where $S$ is the spinor module (Definition \ref{def:spinor}). Then there is an injective $\widetilde{W}$-module homomorphism:} $H_D(M) \hookrightarrow H^{\bullet}(\mf{h}^*, M) \otimes \chi.$

\smallskip
If $M$ is unitary, then the inclusion in Theorem B is an isomorphism. It follows from a Hodge decomposition theorem for unitary modules.

\noindent \textbf{Theorem C (Theorem \ref{thm:unitary}).} \textit{ Let $M$ be a $\ast$-unitary module in the sense of \cite{ES}. Then one has\\
(a)\ $H_D(M) = \ker D = \ker D^2$;\\
(b)\ $M \otimes S = \ker D \oplus \mathrm{im}\ D_x \oplus \mathrm{im}\ D_y$;\\
(c)\ $\ker D_x = \ker D \oplus \mathrm{im}\ D_x$, $\ker D_y = \ker D \oplus \mathrm{im}\ D_y$,\\
where $D_x$, $D_y$ are half Dirac operators defined in Definition \ref{def:halfdirac} satisfying $D = D_x + D_y$. Consequently, }
$$H_D(M) = \ker D = \ker D_x/ \mathrm{im}\ D_x \cong H^{\bullet}(\mf{h}^*,M) \otimes \chi.$$

In the last two sections we study the Lie algebra cohomology of certain types of ${\bf H}_{t,c}$-modules. Section 6 concerns about the category $\mc{O}$ of ${\bf H}_{1,c}$-modules (see \cite{GGOR}). For example, we show that if $M(\sigma) := {\bf H}_{1,c} \otimes_{S(\mf{h}) \rtimes \bb{C}[W]} \sigma$ is a standard object in category $\mc{O}$, then
$$H_D(M(\sigma)) \cong H^{\bullet}(\mf{h}^*,M(\sigma)) \otimes \chi \cong H_{\bullet}(\mf{h},M(\sigma)) \otimes \chi \cong  \sigma \otimes \chi^{-1}.$$
This is a refinement of Proposition 5.6 in \cite{C1}. Moreover, if $L(\mathrm{triv})$ is the finite-dimensional irreducible quotient of
$M(\mathrm{triv})$ given in Proposition 1.3 of \cite{BEG}, then
$$H_D(L(\mathrm{triv})) \cong H^{\bullet}(\mf{h}^*,L(\mathrm{triv})) \otimes \chi \cong H_{\bullet}(\mf{h},L(\mathrm{triv})) \otimes \chi \cong \wedge^{\bullet}\mf{h} \otimes \chi^{-1}.$$
In other words, the inclusion in Theorem B is also an isomorphism for all standard modules and finite-dimensional modules in category $\mathcal{O}$. \\

Section 7 deals with the baby Verma modules for ${\bf H}_{0,c}$ (see \cite{G2}). We show that
if $\overline{L}(\sigma)$ be the irreducible head of a baby Verma module such that as $W$-modules,
$$H^{\bullet}(\mf{h}^*, \overline{L}(\sigma)) \cong \bigoplus_{i=1}^{k} \nu_i$$
with all $\nu_i$ being irreducible, then the whole set $\{ \nu_i \otimes {\det}_{\mf{h}^*} \ |\ i = 1, \dots, k \}$
is contained in the same Calogero-Moser cell (\cite{G2} Section 7). Combining with Theorem B, this gives an alternative proof of Corollary 5.10 in \cite{C1}.

%%%%%%%%%%%%%%  2 %%%%%%%%%%%%%%%%%
\section{Preliminaries} \label{sec:halfdirac}
%%%%%%%%%%%%%%%%%%%%%%%%%%%%%%%%
We begin this section by recalling the definition of rational Cherednik algebras ${\bf H}_{t,c}$ given in \cite{EG} and \cite{GGOR}.

\begin{definition}\label{def:cherednik}
Let $W$ be a complex reflection group acting on a complex vector space $\mf{h}$, i.e.,
$W$ is a group generated by the pseudo-reflections $s \in \mc{R}$ fixing a hyperplane $H_s \in \mf{h}$.
Let $\alpha_s \in \mf{h}^*$ be a non-zero vector so that the $W$-invariant symmetric pairing $\langle\ ,\ \rangle$
between $\mf{h}$ and $\mf{h}^*$ gives $\langle y,\alpha_s \rangle  = 0$ for all $y \in H_s$. Similarly,
we can define $\alpha_s^{\vee} \in \mf{h}$ corresponding to the action of $s$ on $\mf{h}^*$.
Set $V = \mf{h} \oplus \mf{h}^*$. The rational Cherednik algebra ${\bf H}_{t,c}$ associated to $\mf{h}$, $W$,
with parameters $t \in \bb{C}$ and $W$-invariant functions $c:\mc{R} \to \bb{C}$ is defined as the quotient of $S(V) \rtimes \bb{C}[W]$ by the relation
$$[y,x] = t\langle y, x \rangle - \sum_{s \in \mc{R}} c(s) \frac{\langle y, \alpha_s \rangle \langle \alpha_s^{\vee}, x \rangle}{\langle \alpha_s^{\vee}, \alpha_s \rangle} s$$
for all $y \in \mf{h}$ and $x \in \mf{h}^*$.
\end{definition}

Let ${\bf H}_{t,c}$ be the rational Cherednik algebra corresponding to $W, \mf{h}$ with parameters $t$ and $c$. Let $\{y_1, \dots, y_n\}$ be a basis of $\mf{h}$, and $\{x_1, \dots, x_n\}$ be the corresponding dual basis of $\mf{h}^*$. In \cite{GGOR}, a Casimir-type element $\mathbf{h}$ is defined by
$$\mathbf{h} := \sum_i (x_iy_i + y_ix_i) = 2\sum_i x_iy_i + nt - \sum_{s \in \mc{R}}c(s)s \in {\bf H}_{t,c}^W.$$
It gives a natural grading on ${\bf H}_{t,c}$ when $t = 1$. Note that the definition of $\mathbf{h}$ does not depend on our choice of basis. Following \cite{GGOR},
we make a shift and define $\Omega_{{\bf H}_{t,c}}$ in the following.
\begin{definition}\label{eq:euler}
$$\Omega_{{\bf H}_{t,c}} :=  \mathbf{h} - \sum_{s \in \mc{R}} c(s) \frac{1+\lambda_s }{1 - \lambda_s}s = 2\sum_i x_iy_i + nt - \sum_{s \in \mc{R}}\frac{2c(s)}{1 - \lambda_s} s \in {\bf H}_{t,c}^W,$$
where $\lambda_s = {\det}_{\mf{h}}(s) \in \bb{C}^{\times}$.
\end{definition}

We now define the Lie algebra (co)homology of a ${\bf H}_{t,c}$-module, analogous to the case of the $\mf{n}$ and $\overline{\mf{n}}$-(co)homology of a $\mf{g}$-module studied in \cite{K}.
\begin{definition}
Let $M$ be an ${\bf H}_{t,c}$-module. The {\bf $p^{th}$} {\bf $\mf{h}^*$-cohomology group} $H^p(\mf{h}^*, M)$ of $M$ 
arises as the $p^{th}$ derived functor of the covariant, left exact functor
$$M\mapsto M^{\mf{h}^*}=H^0((\mf{h}^*,M)$$
and can be identified with the $p^{th}$ cohomology group of the cochain complex
\begin{align*}
0 \to Hom_{\mf{h}^*}(\wedge^0\mf{h}^*,M) \xrightarrow{d_0} Hom_{\mf{h}^*}(\wedge^1\mf{h}^*,M) \xrightarrow{d_1}  \dots  \xrightarrow{d_{n-1}} Hom_{\mf{h}^*}(\wedge^n\mf{h}^*,M) \to 0,
\end{align*}
where the differential is defined by
$$d_pf(x_{i_1} \wedge \dots \wedge x_{i_p}) := \sum_j (-1)^j x_{i_j} \cdot f(x_{i_1} \wedge \dots \wedge \widehat{x_{i_l}} \wedge \dots \wedge x_{i_p}).$$
If we identify $Hom_{\mf{h}^*}(\wedge^p\mf{h}^*,M)$ with $M \otimes \wedge^p \mf{h}$,  then $H^p(\mf{h}^*,M)$ is also identified with the $p^{th}$ cohomology of the complex
\begin{align} \label{def:d}
0 \to M \otimes \wedge^0 \mf{h} \xrightarrow{d_0} M \otimes \wedge^1 \mf{h} \xrightarrow{d_1}  \dots  \xrightarrow{d_{n-1}} M \otimes \wedge^n \mf{h} \to 0,
\end{align}
where the differential is defined by
$$d_p(m \otimes y_{i_1} \wedge \dots \wedge y_{i_p}) := \sum_j x_j \cdot m \otimes y_j \wedge y_{i_1} \wedge  \dots \wedge y_{i_p}.$$
\end{definition}
The boundary map $d_p$ is well-defined because of the following proposition.

\begin{proposition} \label{prop:hstarcohomology} \mbox{}\\
(a) The definition of $d_p$ is independent of the choice of basis of $\mf{h}$ and $\mf{h}^*$.\\
(b) Treating $M$ and $\wedge^{p} \mf{h}$ as $W$-modules, then $d_p$ commutes with the action of $W$ on $M \otimes \wedge^p \mf{h}$.\\
(c) $d_{p+1} d_p = 0$.\\
Consequently, $H^p(\mf{h}^*,M)$ is a $W$-module for all $p$.
\end{proposition}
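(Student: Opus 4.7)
The plan is to handle (a), (b), (c) in sequence, with the only nontrivial input being (i) the commutativity of the $x_i$ inside $\mathbf{H}_{t,c}$, and (ii) the $W$-invariance of the canonical pairing between $\mathfrak{h}$ and $\mathfrak{h}^*$.

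For (a), I would observe that the element $\Omega := \sum_j x_j \otimes y_j \in \mathfrak{h}^* \otimes \mathfrak{h}$ is exactly the canonical tensor corresponding to $\mathrm{id}_{\mathfrak{h}} \in \mathrm{End}(\mathfrak{h})$ under the isomorphism $\mathfrak{h}^* \otimes \mathfrak{h} \cong \mathrm{End}(\mathfrak{h})$, and in particular does not depend on the choice of dual bases $\{x_j\}, \{y_j\}$. The map $d_p$ is just the action of $\Omega$ on $M \otimes \wedge^p \mathfrak{h}$ in which the $\mathfrak{h}^*$-factor acts on $M$ via the $\mathbf{H}_{t,c}$-structure and the $\mathfrak{h}$-factor is wedged on the left. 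Since $\Omega$ itself is basis-independent, so is $d_p$. Concretely, for another basis $y_j' = \sum_k a_{jk} y_k$ with dual basis $x_j' = \sum_k (a^{-1})_{kj} x_k$, a one-line computation using $\sum_j (a^{-1})_{kj} a_{jl} = \delta_{kl}$ reproduces $d_p$.

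For (b), the same element $\Omega$ is $W$-invariant because the pairing $\langle\,,\,\rangle: \mathfrak{h}\times\mathfrak{h}^* \to \mathbb{C}$ is $W$-invariant, so $\Omega$ lies in $(\mathfrak{h}^* \otimes \mathfrak{h})^W$. Moreover, because $\mathbb{C}[W] \subset \mathbf{H}_{t,c}$ and conjugation by $w \in W$ acts on the subspace $\mathfrak{h}^* \subset \mathbf{H}_{t,c}$ via the natural $W$-action, we have $w \cdot (x_j \cdot m) = (w \cdot x_j) \cdot (w \cdot m)$ inside $M$. Applying $w$ to $d_p(m \otimes y_{i_1} \wedge \cdots \wedge y_{i_p})$ and using $W$-invariance of $\Omega$ then gives $w \cdot d_p = d_p \cdot w$.

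For (c), I would directly compute
\begin{equation*}
d_{p+1}d_p(m \otimes y_{i_1} \wedge \cdots \wedge y_{i_p}) = \sum_{j,k} x_k x_j \cdot m \otimes y_k \wedge y_j \wedge y_{i_1} \wedge \cdots \wedge y_{i_p}.
\end{equation*}
The only defining relations in $\mathbf{H}_{t,c}$ involving $\mathfrak{h}^*$ are the $[y,x]$ relations of Definition \ref{def:cherednik}, which do not impose any commutator among elements of $\mathfrak{h}^*$; hence $S(\mathfrak{h}^*) \subset \mathbf{H}_{t,c}$ is commutative and $x_k x_j = x_j x_k$. Since $y_k \wedge y_j = -y_j \wedge y_k$, swapping the summation indices $j \leftrightarrow k$ shows the above double sum equals its own negative, hence vanishes. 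The consequence that $H^p(\mathfrak{h}^*, M)$ is a $W$-module then follows immediately from (b) and (c). The only point requiring any care is the verification in (a) that $\Omega$ really is basis-independent (and the bookkeeping in (b)); everything else is formal.
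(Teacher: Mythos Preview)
Your proof is correct and follows essentially the same approach as the paper: the paper also verifies (a) by a direct change-of-basis computation, proves (b) by unwinding $w\cdot d_p$ using $wx_j\cdot m=w(x_j)\cdot wm$ and then invoking (a) (which is exactly your $W$-invariance of $\Omega$), and proves (c) by the same symmetry/antisymmetry argument. Your packaging via the canonical tensor $\Omega=\sum_j x_j\otimes y_j\in(\mathfrak{h}^*\otimes\mathfrak{h})^W$ is a slightly cleaner way of saying the same thing.
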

\begin{proof}
(a)\ Suppose we have another basis $y_i'$ of $\mf{h}$ given by $y_i' = \sum_j A_{ji}y_j$ for some invertible $A = (A_{ij})$. Then the corresponding dual basis $x_i'$ of $\mf{h}^*$ must be given by $x_i' = \sum_k B_{ki}x_i$, where $B = (A^{-1})^T$. Then it is easy to check that
$$\sum_j x_j' \cdot m \otimes y_j' \wedge y_{i_1} \wedge  \dots \wedge y_{i_p} = \sum_j x_j \cdot m \otimes y_j \wedge y_{i_1} \wedge  \dots \wedge y_{i_p}.$$
Hence, $d_p$ is independent of the basis of $\mf{h}$.\\
(b) For all $w \in W$,
\begin{align*}
w\cdot d_p (m \otimes y_{i_1} \wedge  \dots \wedge y_{i_p}) &= w \sum_j x_j \cdot m \otimes y_j \wedge y_{i_1} \wedge  \dots \wedge y_{i_p}\\
&= \sum_j wx_j \cdot m \otimes w(y_j) \wedge w(y_{i_1}) \wedge  \dots \wedge w(y_{i_p})\\
&= \sum_j w(x_j)w \cdot m \otimes w(y_j) \wedge w(y_{i_1}) \wedge  \dots \wedge w(y_{i_p})\\
&= d_p (w \cdot m \otimes w(y_{i_1}) \wedge  \dots \wedge w(y_{i_p}))\\
&= d_p (w \cdot (m \otimes y_{i_1} \wedge  \dots \wedge y_{i_p})),
\end{align*}
where the third equality follows from the definition of ${\bf H}_{t,c}$, and fourth equality follows from (a).\\
(c) By the definition of $d_p$,
\begin{align*}
d_{p+1}d_p(m \otimes y_{i_1} \wedge \dots \wedge y_{i_p}) &= \sum_{i,j} x_i x_j \cdot m \otimes y_i \wedge y_j \wedge y_{i_1} \wedge  \dots \wedge y_{i_p}\\
&= \sum_{i<j} x_i x_j \cdot m \otimes (y_i \wedge y_j + y_j \wedge y_i) \wedge y_{i_1} \wedge  \dots \wedge y_{i_p}\\
& \ \ + \sum_{i} x_i^2 \cdot m \otimes (y_i \wedge y_i) \wedge y_{i_1} \wedge  \dots \wedge y_{i_p},
\end{align*}
which is equal to $0$ since $y_i \wedge y_j = - y_j \wedge y_i$ for all $i, j$.
\end{proof}

\begin{definition}
Let $M$ be an ${\bf H}_{t,c}$-module. The {\bf $p^{th}$} {\bf $\mf{h}$-homology group} $H_p(\mf{h}, M)$ of $M$ arises as the $p^{th}$  
derived functor of the covaraint, right exact functor 
$$M\mapsto M/\mf{h}M=H_0(\mf{h},M)$$ 
on the category of $\mf{h}$-modules.  It can be calculated as the $p^{th}$ homology group of the chain complex
\begin{align} \label{def:partial}
0 \to M \otimes \wedge^n \mf{h} \xrightarrow{\partial_n} M \otimes \wedge^{n-1} \mf{h} \xrightarrow{\partial_{n-1}}  \dots \xrightarrow{\partial_2} M \otimes \mf{h} \xrightarrow{\partial_1} M \to 0,
\end{align}
where the differential is defined by
\begin{align*}
\partial_p(m \otimes y_{i_1} \wedge \dots \wedge y_{i_p}) &:= \sum_k (-1)^k y_{i_k} \cdot m \otimes y_{i_1} \wedge \dots \wedge \widehat{y_{i_k}} \wedge \dots \wedge y_{i_p}\\
&= \sum_k \sum_j (-1)^k \langle x_j ,y_{i_k} \rangle y_j \cdot m \otimes  y_{i_1} \wedge \dots \wedge \widehat{y_{i_k}} \wedge \dots \wedge y_{i_p}.
\end{align*}
\end{definition}
We have the following proposition whose proof is similar to that of $\mf{h}^*$-cohomology.
\begin{proposition} \mbox{}\\
(a) The definition of $\partial_p$ is independent of the choice of basis of $\mf{h}$ and $\mf{h}^*$.\\
(b) Treating $M$ and $\wedge^{p} \mf{h}$ as $W$-modules, then $\partial_p$ commutes with the action of $W$ on $M \otimes \wedge^{p} \mf{h}$.\\
(c) $\partial_{p-1} \partial_p = 0$.\\
Consequently, $H_p(\mf{h},M)$ is a $W$-module.
\end{proposition}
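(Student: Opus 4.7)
The plan is to mirror the proof of Proposition \ref{prop:hstarcohomology}, adapting each of the three steps to the homological setting. The differentials $\partial_p$ are built from the action of $y \in \mf{h}$ on $M$ together with contraction on the exterior factor, in contrast to the cohomological $d_p$ which multiplies by $x_j \in \mf{h}^*$ on $M$ and wedges with $y_j$. The key algebraic inputs remain the same: $\mf{h}$ is a commutative subalgebra of ${\bf H}_{t,c}$, and $W$ acts by algebra automorphisms preserving $\mf{h}$. The arguments will therefore be essentially formal.

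For (a), I would observe that the first expression
$$\partial_p(m \otimes y_{i_1} \wedge \dots \wedge y_{i_p}) = \sum_k (-1)^k y_{i_k} \cdot m \otimes y_{i_1} \wedge \dots \wedge \widehat{y_{i_k}} \wedge \dots \wedge y_{i_p}$$
is manifestly independent of any basis of $\mf{h}$, since it involves only wedge products and the intrinsic action of elements $y_{i_k}$. The dual basis $\{x_j\}$ enters only through the identity $y_{i_k}=\sum_j \langle x_j, y_{i_k}\rangle y_j$ used to put $\partial_p$ into the second form; checking that this rewriting is independent of the chosen pairing of bases is a one-line computation identical in spirit to the change-of-basis verification in Proposition \ref{prop:hstarcohomology}(a).

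For (b), I would carry out the same string of equalities as in Proposition \ref{prop:hstarcohomology}(b). Writing $w \cdot \partial_p(m \otimes y_{i_1} \wedge \dots \wedge y_{i_p})$, commute $w$ past each $y_{i_k}\cdot m$ using $w y_{i_k} = (w\cdot y_{i_k})\, w$ in ${\bf H}_{t,c}$, and recognize the resulting expression as $\partial_p\bigl(w\cdot (m \otimes y_{i_1}\wedge \dots \wedge y_{i_p})\bigr)$. The fact that $W$ preserves $\mf{h}$ (so $w\cdot y_{i_k} \in \mf{h}$) is what makes the computation go through, and no commutation relation between $\mf{h}$ and $\mf{h}^*$ is needed here.

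For (c), which I expect to be the step worth writing carefully, the computation
$$\partial_{p-1}\partial_p(m \otimes y_{i_1} \wedge \dots \wedge y_{i_p}) = \sum_{k}\sum_{l\neq k} \pm\, y_{i_l} y_{i_k}\cdot m \otimes y_{i_1}\wedge \dots \widehat{y_{i_l}}\dots \widehat{y_{i_k}}\dots \wedge y_{i_p}$$
splits into pairs indexed by $\{k,l\}$ with $k\neq l$. The only nontrivial point, and the analogue of using $y_i\wedge y_j = -y_j\wedge y_i$ in Proposition \ref{prop:hstarcohomology}(c), is that $y_{i_l}y_{i_k} = y_{i_k}y_{i_l}$ in ${\bf H}_{t,c}$. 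This holds because the defining relation in Definition \ref{def:cherednik} only constrains $[y,x]$ for $y \in \mf{h}$, $x \in \mf{h}^*$, so $\mf{h} \subset {\bf H}_{t,c}$ generates a commutative subalgebra (inherited from $S(\mf{h})\subset S(V)\rtimes\bb{C}[W]$). A careful sign bookkeeping — the standard Koszul sign argument — then shows the $(k,l)$ and $(l,k)$ contributions cancel, giving $\partial_{p-1}\partial_p = 0$. Combined with (b), this proves that each $H_p(\mf{h},M)$ inherits a $W$-module structure.
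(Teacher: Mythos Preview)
Your proposal is correct and follows exactly the approach the paper indicates: the paper does not spell out a proof but simply states that it is similar to that of Proposition~\ref{prop:hstarcohomology}, and your three-part argument carries out precisely that adaptation, with the correct observation that commutativity of $\mf{h}$ inside ${\bf H}_{t,c}$ replaces the antisymmetry of the wedge product used in part~(c) of the cohomological case.
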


From now on, we write $H^{\bullet}(\mf{h}^*,M) := \oplus_i H^{i}(\mf{h}^*,M)$ and $H_{\bullet}(\mf{h},M) := \oplus_i H_{i}(\mf{h},M)$ as the ungraded sum of the (co)homology spaces. Similarly, we can define $\mf{h}^*$-homology $H_{\bullet}(\mf{h}^*,M)$ and $\mf{h}$-cohomology $H^{\bullet}(\mf{h},M)$. They are related to $\mf{h}^*$-cohomology and $\mf{h}$-homology respectively by the Poincar\'{e} duality:
\begin{proposition}[Poincar\'{e} duality]\label{poincare}
Let $\mf{c} = \mf{h}$ or $\mf{h}^*$, then the perfect pairing $\wedge^p \mf{c} \times \wedge^{n-p} \mf{c} \to \wedge^n \mf{c}$ gives a Poincar\'{e} duality between the $H^{i}(\mf{c}, M)$ and the homology $H_{n -i}(\mf{c}, M)$ defined above. More precisely, for all $i = 1, \dots n$,
$$H_{i}(\mf{c}, M) \cong H^{n - i}(\mf{c}, M) \otimes \wedge^n \mf{c},\ \ \ \text{or}\ \ H^{i}(\mf{c}, M) \cong H_{n - i}(\mf{c}, M) \otimes \wedge^n \mf{c}^*$$
as $W$-modules.
\end{proposition}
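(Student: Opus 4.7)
The plan is to produce an explicit $W$-equivariant isomorphism of (co)chain complexes induced by the perfect pairing on $\wedge^\bullet \mf{c}$, and then pass to (co)homology. I focus on the case $\mf{c} = \mf{h}^*$; the case $\mf{c} = \mf{h}$ is entirely symmetric, obtained by interchanging the roles of $\mf{h}$ and $\mf{h}^*$ throughout. Recall that $H^\bullet(\mf{h}^*, M)$ is computed from the complex $(M \otimes \wedge^\bullet \mf{h}, d)$ of \eqref{def:d}, while $H_\bullet(\mf{h}^*, M)$ is, by analogy, the homology of the complex $(M \otimes \wedge^\bullet \mf{h}^*, \partial^*)$ with
$$\partial_p^*(m \otimes x_{i_1} \wedge \cdots \wedge x_{i_p}) = \sum_k (-1)^k\, x_{i_k} \cdot m \otimes x_{i_1} \wedge \cdots \wedge \widehat{x_{i_k}} \wedge \cdots \wedge x_{i_p}.$$

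The first step is to write down the canonical isomorphism of $W$-modules
$$\phi_p : \wedge^p \mf{h} \;\xrightarrow{\sim}\; \wedge^{n-p} \mf{h}^* \otimes \wedge^n \mf{h}$$
arising from the perfect pairing $\wedge^p \mf{h} \otimes \wedge^{n-p} \mf{h} \to \wedge^n \mf{h}$. Explicitly, fixing $\omega := y_1 \wedge \cdots \wedge y_n$, the map sends $y_I := y_{i_1} \wedge \cdots \wedge y_{i_p}$ (for $I = \{i_1 < \cdots < i_p\}$) to $\epsilon_I\, x_{I^c} \otimes \omega$, where $x_{I^c}$ is the sorted wedge of $x_j$ with $j \in I^c := \{1,\dots,n\}\setminus I$ and $\epsilon_I \in \{\pm 1\}$ is the sign of the $(p, n-p)$-shuffle ordering $(I, I^c)$. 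Tensoring by $\operatorname{id}_M$ gives $\Phi_p := \operatorname{id}_M \otimes \phi_p : M \otimes \wedge^p \mf{h} \to (M \otimes \wedge^{n-p} \mf{h}^*) \otimes \wedge^n \mf{h}$, which is $W$-equivariant because the pairing is $W$-invariant and the action of $W$ on $\wedge^n \mf{h}$ (the one-dimensional character $\det_\mf{h}$) commutes with everything else.

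The heart of the proof is the claim that $\Phi_\bullet$ intertwines the differentials up to sign, i.e.,
$$\Phi_{p+1} \circ d_p \;=\; \pm\,(\partial^*_{n-p} \otimes \operatorname{id}_{\wedge^n \mf{h}}) \circ \Phi_p.$$
I would verify this by direct term-by-term comparison. Applied to $m \otimes y_I$, the left-hand side expands using $d_p(m \otimes y_I) = \sum_{j \notin I} x_j \cdot m \otimes y_j \wedge y_I$ into a sum, over $j \in I^c$, of terms of shape $x_j \cdot m \otimes x_{(I\cup\{j\})^c} \otimes \omega$. The right-hand side expands via $\partial^*_{n-p}$, which strips off each $x_j$ (for $j \in I^c$) from $x_{I^c}$ and moves it across the tensor to act on $m$. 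The indexing sets of terms match exactly, and what remains is to reconcile the signs contributed by $\epsilon_I$, $\epsilon_{I \cup \{j\}}$, the reordering of $y_j \wedge y_I$, and $(-1)^k$ from $\partial^*_{n-p}$. This is a routine but fiddly shuffle-sign identity and constitutes the main (and essentially the only) technical obstacle in the argument.

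Granting the sign identity, $\Phi_\bullet$ is a $W$-equivariant isomorphism of complexes, once one reverses the grading on one side. Passing to (co)homology yields
$$H^p(\mf{h}^*, M) \;\cong\; H_{n-p}(\mf{h}^*, M) \otimes \wedge^n \mf{h}$$
as $W$-modules, which is one of the two stated forms of the duality; the other form follows by tensoring with $\wedge^n \mf{h}^*$ and using the canonical trivialization $\wedge^n \mf{h} \otimes \wedge^n \mf{h}^* \cong \bb{C}$. The case $\mf{c} = \mf{h}$ is handled identically via the mirror isomorphism $\wedge^p \mf{h}^* \cong \wedge^{n-p} \mf{h} \otimes \wedge^n \mf{h}^*$.
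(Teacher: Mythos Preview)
The paper states this proposition without proof, so there is no argument to compare against. Your approach is the standard one for Poincar\'{e} duality of (co)homology of an abelian Lie algebra: the wedge pairing induces a $W$-equivariant isomorphism $\wedge^p\mf{h}\cong\wedge^{n-p}\mf{h}^*\otimes\wedge^n\mf{h}$, and a direct computation shows that under this identification the cochain differential $d$ on $M\otimes\wedge^\bullet\mf{h}$ matches (up to a sign depending only on $p$) the chain differential $\partial^*$ on $M\otimes\wedge^\bullet\mf{h}^*$ tensored with $\mathrm{id}_{\wedge^n\mf{h}}$. Your outline is correct, and the sign identity you flag as ``routine but fiddly'' is indeed the only nontrivial verification; it goes through exactly as you describe (in the sanity check $n=2$, $p=0$ one finds the sign is $-1$). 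Once that is done the isomorphism of complexes yields the stated $W$-module isomorphism on (co)homology, and the alternate form follows by tensoring with the dual determinant character as you indicate.
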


We now recall the Clifford algebra and spinor module of $V = \mf{h} \oplus \mf{h}^*$. Define a $W$-invariant bilinear product on $V$ by $\langle\ ,\ \rangle$ by $\langle x_i, x_j \rangle = \langle y_i , y_j \rangle = 0$, $\langle x_i , y_j\rangle = \delta_{ij}$ (this is the same as the pairing given in Definition \ref{def:cherednik}). The \textbf{Clifford algebra} $C(V)$ with respect to $\langle\ ,\ \rangle$ is the tensor algebra of $V$ subject to the relations
$$x_ix_j + x_jx_i = y_iy_j + y_jy_i = 0,\ x_iy_j + y_jx_i = -2\delta_{ij}.$$
There is a natural injection $\widetilde{W} \hookrightarrow \mathrm{Pin}(V) \hookrightarrow C(V)^{\times}$, where $p: \widetilde{W} \to W$ is the double cover of $W$ given by the pull-back of the Pin cover $p: \mathrm{Pin}(V) \to O(V)$. For any $s \in \mc{R}$, let
\begin{align} \label{taus}
\mu_s = \frac{\sqrt{\lambda_s} - 1/\sqrt{\lambda_s}}{2\langle \alpha_s^{\vee}, \alpha_s\rangle} \alpha_s \alpha_s^{\vee} + \sqrt{\lambda_s} \in C(V),
\end{align}
where $\sqrt{\lambda_s}$ is a choice of the square root of $\lambda_s = \det_{\mf{h}}(s)$. Then the calculations in Lemma 4.6 of \cite{C1} shows that $\{ \pm \mu_s \} = p^{-1}(s) \subset \widetilde{W}$, and $\{ \pm \mu_s | s \in \mc{R} \}$ generate $\widetilde{W}$.\\

\begin{definition} \label{def:spinor}
The {\bf spinor module} $S$ corresponding to the Clifford algebra $C(V)$ can be realized as $S \cong \wedge^{\bullet} \mf{h}$ as vector spaces. The action of $C(V)$  on $S$ is defined by
\begin{align*}
x(y_{i_1} \wedge \dots \wedge y_{i_p}) &= 2 \sum_j(-1)^j \langle x,y_{i_j} \rangle y_{i_1} \wedge \dots \wedge \widehat{y_{i_j}} \wedge \dots \wedge y_{i_p}, \ \ x \in \mf{h}^*;\\
y(y_{i_1} \wedge \dots \wedge y_{i_p}) &= y \wedge y_{i_1} \wedge \dots \wedge y_{i_p}, \ \ y \in \mf{h}.
\end{align*}
\end{definition}
The following proposition describes $S$ as a $\widetilde{W}$-module:
\begin{proposition} \label{prop:hstar}
Every $\widetilde{w} \in \widetilde{W}$ preserves every $\wedge^l \mf{h} \subset S$. In particular, for every $v_i \in \mf{h}$,
$$\widetilde{w} \cdot v_1 \wedge v_2 \wedge \dots \wedge v_l = \chi(\widetilde{w}) \cdot (p(\widetilde{w})(v_1) \wedge p(\widetilde{w})(v_2) \wedge \dots \wedge p(\widetilde{w})(v_l))$$
with $\chi$ being a genuine one-dimensional $\widetilde{W}$-module satisfying $\chi^2(\widetilde{w}) = {\det}_{\mf{h}^*}(p(\widetilde{w}))$. Therefore, as $\widetilde{W}$-modules,
\begin{align} \label{eq:spinor}
S \cong \chi \otimes \wedge^{\bullet}\mf{h},
\end{align}
where $\wedge^{\bullet}\mf{h}$ is considered as a $\widetilde{W}$-module that factors through $p: \widetilde{W} \to W$.
\end{proposition}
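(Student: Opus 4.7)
The plan is to reduce everything to the generators $\mu_s$ and then verify the formula by a direct calculation with the Clifford action. Since $\{\pm \mu_s : s \in \mc{R}\}$ generates $\widetilde{W}$ by the cited computation in Lemma 4.6 of \cite{C1}, and both sides of the claimed equality are multiplicative in $\widetilde{w}$, it suffices to check the statement for $\widetilde{w} = \mu_s$ and then verify consistency, which will automatically force $\chi$ to be a well-defined one-dimensional character.

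First I would compute $\mu_s \cdot 1 \in \wedge^0 \mf{h} = \bb{C}$. From Definition \ref{def:spinor}, $\alpha_s^\vee(1) = \alpha_s^\vee$ and then $\alpha_s(\alpha_s^\vee) = -2\langle \alpha_s, \alpha_s^\vee\rangle$. Substituting into (\ref{taus}) gives $\mu_s \cdot 1 = -(\sqrt{\lambda_s} - 1/\sqrt{\lambda_s}) + \sqrt{\lambda_s} = 1/\sqrt{\lambda_s}$. This identifies the scalar $\chi(\mu_s) := 1/\sqrt{\lambda_s}$, and one immediately verifies $\chi(\mu_s)^2 = 1/\lambda_s = {\det}_{\mf{h}^*}(s)$, as required.

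Next I would compute $\mu_s \cdot (v_1 \wedge \cdots \wedge v_l)$ for arbitrary $v_i \in \mf{h}$ and compare it with $\chi(\mu_s) \cdot (s(v_1) \wedge \cdots \wedge s(v_l))$. On the one hand, applying the Clifford action of $\alpha_s^\vee$ (which wedges) followed by $\alpha_s$ (which contracts with the factor $-2$) to $v_1 \wedge \cdots \wedge v_l$ yields a multiple of $v_1 \wedge \cdots \wedge v_l$ plus a sum of terms in which some $v_k$ is replaced by $\alpha_s^\vee$ with an explicit coefficient. On the other hand, the reflection formula $s(v) = v - \frac{(1-\lambda_s)\langle \alpha_s, v\rangle}{\langle \alpha_s, \alpha_s^\vee\rangle}\alpha_s^\vee$, together with the observation that $\alpha_s^\vee \wedge \alpha_s^\vee = 0$ kills all higher-order terms in the multi-linear expansion of $s(v_1)\wedge \cdots \wedge s(v_l)$, produces exactly the same shape of answer. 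The key numerical identity to verify is that the ratio of the two coefficients is precisely $\chi(\mu_s) = 1/\sqrt{\lambda_s}$, which reduces to the elementary identity $\sqrt{\lambda_s} - 1/\sqrt{\lambda_s} = (\lambda_s - 1)/\sqrt{\lambda_s}$.

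This calculation simultaneously proves the preservation of $\wedge^l \mf{h}$, the explicit formula for the $\widetilde{W}$-action, and the character property of $\chi$ on generators. Multiplicativity of both sides in $\widetilde{w}$ then shows that $\chi$ extends to a genuine one-dimensional character of $\widetilde{W}$ and that the formula holds on all of $\widetilde{W}$; the isomorphism $S \cong \chi \otimes \wedge^{\bullet}\mf{h}$ in (\ref{eq:spinor}) is an immediate reformulation. The main subtlety is bookkeeping the signs from the $(-1)^j$ in Definition \ref{def:spinor}, the $(-1)^{k-1}$ that appears when moving $\alpha_s^\vee$ to the front of a wedge, and the relative sign coming from $\langle \alpha_s, \alpha_s^\vee\rangle$ versus $\langle \alpha_s^\vee, \alpha_s\rangle$; everything else is a routine verification.
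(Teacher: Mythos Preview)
Your proposal is correct and follows essentially the same strategy as the paper: reduce to the generators $\pm\mu_s$ and verify the formula by a direct Clifford-action computation. The only difference is that the paper first chooses a basis $\{y_1,\dots,y_n\}$ of $\mf{h}$ adapted to $s$ (with $y_1=\alpha_s^\vee$ and $y_2,\dots,y_n$ fixed by $s$), which collapses the calculation of $\alpha_s\alpha_s^\vee\cdot(y_{l_1}\wedge\cdots\wedge y_{l_k})$ into two trivial cases and sidesteps the sign bookkeeping you anticipate.
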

\begin{proof}
We only need to prove the proposition for all generators $\pm \mu_s \in \widetilde{W}$.
Let $\alpha_s^{\vee} \in \mf{h}$, $\alpha_s \in \mf{h}^*$ be as in Definition \ref{def:cherednik},
and fix a basis $\{y_1, \dots, y_n\}$ of $\mf{h}$ with $y_1 = \alpha_s^{\vee}$, $y_2, \dots, y_n \in \ker (Id - s)|_{\mf{h}}$. Then
$$s(y_1) = \lambda_s y_1,\ \ \ \ \ s(y_i) = y_i \text{ for }\ i >1.$$
For $1 \leq l_1 < \dots < l_k \leq n$, it is easy to check that
$$\alpha_s \alpha_s^{\vee} \cdot y_{l_1} \wedge \dots \wedge y_{l_k} =
\begin{cases}
    0,& \text{if } l_1 = 1;\\
    -2\langle \alpha_s^{\vee}, \alpha_s\rangle y_{l_1} \wedge \dots \wedge y_{l_k},      & \text{otherwise}.
\end{cases} $$
Using \eqref{taus}, $\pm \mu_s$ acts on $y_{l_1} \wedge \dots \wedge y_{l_k}$ by $\pm \mu_s \cdot y_{l_1} \wedge \dots \wedge y_{l_k}
= \frac{\pm 1}{\sqrt{\lambda_s}} s(y_{l_1}) \wedge \dots \wedge s(y_{l_k})$ in both cases. Hence the result follows from the fact that $(\frac{\pm 1}{\sqrt{\lambda_s}})^2 = \lambda_s^{-1} = {\det}_{\mf{h}^*}(s)$.
\end{proof}

\begin{definition}\label{def:halfdirac}
Let $D_x$, $D_y$ be elements  in ${\bf H} \otimes C(V)$ given by
$$D_x := \sum_i x_i \otimes y_i; \ \ \ \ \ D_y:= \sum_i y_i \otimes x_i.$$
If $M$ is a ${\bf H}_{t,c}$-module, then $D_x$, $D_y$ acts on $M \otimes S$ by
$$D_x(m \otimes s) =  \sum_{i} x_i\cdot m \otimes y_is;\ \ \ \ \ D_y(m \otimes s) =  \sum_{i} y_i\cdot m \otimes x_is.$$
\end{definition}

\begin{theorem}[\cite{C1} Lemma 2.4, Proposition 4.9] \label{thm:dsquare}   \mbox{} We have \\
(a)\ $D_x$ and $D_y$ are independent of the choice of basis of $\mf{h}$.\\
(b)\ Let $\Delta: \bb{C}[\widetilde{W}] \to {\bf H}_{t,c} \otimes C(V)$ be the diagonal map $\widetilde{w} \mapsto p(\widetilde{w}) \otimes \widetilde{w}$. Then $D_x$ and $D_y$ commute with $\Delta(\bb{C}[\widetilde{W}])$. \\
(c)\ $D_x^2 = D_y^2 = 0$.\\
Consequently, $\ker D_x/\mathrm{im}\ D_x$ and $\ker D_y/\mathrm{im}\ D_y$ are naturally $\widetilde{W}$-modules.
\end{theorem}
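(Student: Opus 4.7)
The plan is to verify parts (a), (b), (c) by short computations in the same spirit as Proposition \ref{prop:hstarcohomology}, and then read off the ``consequently'' clause. Throughout I will focus on $D_x$; the argument for $D_y$ is obtained by interchanging the roles of $\mf{h}$ and $\mf{h}^*$ and noting that the Clifford relations on $\mf{h}^*$ and on $\mf{h}$ are symmetric (both $\{x_i,x_j\}=0$ and $\{y_i,y_j\}=0$ in $C(V)$), and likewise $[x_i,x_j]=[y_i,y_j]=0$ in ${\bf H}_{t,c}$.

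For (a), the element $D_x=\sum_i x_i\otimes y_i$ is the image in ${\bf H}_{t,c}\otimes C(V)$ of the canonical element of $\mf{h}^*\otimes \mf{h}$ corresponding to $\mathrm{id}_{\mf h}$. Concretely, if $y_i'=\sum_j A_{ji}y_j$ is another basis with dual $x_i'=\sum_k (A^{-1})^T_{ki}x_k$, then $\sum_i x_i'\otimes y_i'=\sum_{j,k}\bigl(\sum_i A_{ji}(A^{-1})^T_{ki}\bigr)x_k\otimes y_j=\sum_j x_j\otimes y_j$, exactly as in the proof of Proposition \ref{prop:hstarcohomology}(a).

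For (b), since $\{\pm\mu_s:s\in \mc R\}$ generate $\widetilde{W}$, it suffices to check that $D_x$ commutes with $\Delta(\widetilde{w})=p(\widetilde{w})\otimes\widetilde{w}$ for each such generator. The key input is the defining property of the Pin cover: for every $v\in V\subset C(V)$, conjugation by $\widetilde{w}\in\widetilde{W}\subset C(V)^\times$ acts by $\widetilde{w}\,v\,\widetilde{w}^{-1}=p(\widetilde{w})(v)$. Therefore
\begin{equation*}
\Delta(\widetilde{w})\,D_x\,\Delta(\widetilde{w})^{-1}=\sum_i p(\widetilde{w})(x_i)\otimes p(\widetilde{w})(y_i),
\end{equation*}
and because $\{p(\widetilde{w})(y_i)\}$ is again a basis of $\mf{h}$ with dual basis $\{p(\widetilde{w})(x_i)\}$, part (a) identifies the right hand side with $D_x$.

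For (c), expand $D_x^2=\sum_{i,j}x_ix_j\otimes y_iy_j$. In the first tensor factor $x_ix_j=x_jx_i$ because $\mf{h}^*$ is a commutative subalgebra of ${\bf H}_{t,c}$, while in $C(V)$ we have $y_iy_j=-y_jy_i$ and $y_i^2=0$. Pairing indices $(i,j)$ with $(j,i)$ shows that each pair contributes $x_ix_j\otimes(y_iy_j+y_jy_i)=0$, and the diagonal terms vanish by $y_i^2=0$; the analogous computation yields $D_y^2=0$. The ``consequently'' clause is then formal: by (b) the maps $D_x,D_y\colon M\otimes S\to M\otimes S$ are $\Delta(\widetilde{W})$-equivariant, and by (c) they are differentials, so their cohomologies $\ker D_x/\mathrm{im}\,D_x$ and $\ker D_y/\mathrm{im}\,D_y$ inherit canonical $\widetilde{W}$-module structures. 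There is no serious obstacle here; the only subtlety to handle carefully is invoking the Pin-cover conjugation identity in (b), making sure $y_i$ is treated as an element of $C(V)$ rather than of ${\bf H}_{t,c}$ when computing $\widetilde{w}\,y_i\,\widetilde{w}^{-1}$.
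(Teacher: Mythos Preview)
Your proof is correct. The paper does not supply its own argument for this theorem---it simply cites \cite{C1}---so there is nothing to compare against here; your direct verification via basis-independence of the canonical tensor, the Pin-cover conjugation identity, and the symmetry/antisymmetry cancellation is exactly the expected computation and mirrors the style of Proposition~\ref{prop:hstarcohomology}.
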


Recall from Proposition \ref{prop:hstar} that we can identify $S$ with $\wedge^{\bullet}\mf{h} \otimes \chi$ as $\widetilde{W}$-modules. With this identification,
$$D_x=d\otimes \chi; \ \ \ D_y=2\partial \otimes \chi.$$
Thus, we have the following proposition relating the above operators with the Lie algebra (co)homology.

\begin{proposition} \label{prop:dpartial}
There are $\widetilde{W}$-module isomorphisms:
$$\ker D_x/\mathrm{im}\ D_x  \cong H^{\bullet}(\mf{h}^*,M) \otimes \chi \ \ \text{and} \ \  \ker D_y/\mathrm{im}\ D_y \cong H_{\bullet}(\mf{h},M) \otimes \chi.$$
\end{proposition}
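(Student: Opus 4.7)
The plan is to verify the identifications $D_x = d \otimes \chi$ and $D_y = 2\partial \otimes \chi$ asserted in the paragraph preceding the proposition; once these identifications are established, the isomorphisms of $\widetilde{W}$-modules are immediate.

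First I would fix a basis $\{y_1,\ldots,y_n\}$ of $\mf{h}$ with dual basis $\{x_1,\ldots,x_n\}$ of $\mf{h}^*$, and use the identification $S \cong \chi \otimes \wedge^{\bullet}\mf{h}$ as $\widetilde{W}$-modules provided by Proposition \ref{prop:hstar}. Under this identification, since $\chi$ is one-dimensional, it suffices to compute how $D_x$ and $D_y$ act on $M \otimes \wedge^{\bullet}\mf{h}$. For $D_x$, the Clifford action from Definition \ref{def:spinor} gives $y_i \cdot (y_{j_1}\wedge\cdots\wedge y_{j_k}) = y_i \wedge y_{j_1}\wedge\cdots\wedge y_{j_k}$, so
\begin{align*}
D_x(m \otimes y_{j_1}\wedge\cdots\wedge y_{j_k}) &= \sum_i x_i\cdot m \otimes y_i \wedge y_{j_1}\wedge\cdots\wedge y_{j_k} \\
&= d_k(m \otimes y_{j_1}\wedge\cdots\wedge y_{j_k}),
\end{align*}
matching exactly the differential of the $\mf{h}^*$-cohomology complex \eqref{def:d}.

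Next, for $D_y$, I would use the Clifford contraction formula from Definition \ref{def:spinor}, namely $x_i \cdot (y_{j_1}\wedge\cdots\wedge y_{j_k}) = 2\sum_l (-1)^l \langle x_i, y_{j_l}\rangle\, y_{j_1}\wedge\cdots\wedge \widehat{y_{j_l}}\wedge\cdots\wedge y_{j_k}$. A direct calculation, collapsing $\sum_i \langle x_i, y_{j_l}\rangle y_i = y_{j_l}$ via the dual-basis pairing, yields
\begin{align*}
D_y(m \otimes y_{j_1}\wedge\cdots\wedge y_{j_k}) &= 2\sum_l (-1)^l y_{j_l}\cdot m \otimes y_{j_1}\wedge\cdots\wedge \widehat{y_{j_l}}\wedge\cdots\wedge y_{j_k} \\
&= 2\partial_k(m \otimes y_{j_1}\wedge\cdots\wedge y_{j_k}),
\end{align*}
which is twice the boundary map in the chain complex \eqref{def:partial}. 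Hence as operators on $M \otimes \wedge^{\bullet}\mf{h}$, we have $D_x = d$ and $D_y = 2\partial$.

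Finally, I would extract the cohomology/homology consequence: since $D_x$ and $D_y$ commute with $\Delta(\bb{C}[\widetilde{W}])$ by Theorem \ref{thm:dsquare}(b), their kernels and images are $\widetilde{W}$-submodules of $M \otimes S \cong (M\otimes\wedge^{\bullet}\mf{h})\otimes\chi$. The factor of $2$ in $D_y = 2\partial$ is harmless for taking kernels and images, so
\begin{align*}
\ker D_x/\mathrm{im}\, D_x &\cong (\ker d/\mathrm{im}\, d)\otimes\chi \cong H^{\bullet}(\mf{h}^*, M)\otimes\chi,\\
\ker D_y/\mathrm{im}\, D_y &\cong (\ker\partial/\mathrm{im}\,\partial)\otimes\chi \cong H_{\bullet}(\mf{h}, M)\otimes\chi,
\end{align*}
with $\widetilde{W}$-equivariance inherited from Proposition \ref{prop:hstar}. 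There is no substantial obstacle here; the only place to be careful is bookkeeping with the sign conventions of the Clifford action and confirming that the scalar $2$ appearing in $D_y$ does not disturb the homology, but neither of these introduces genuine difficulty.
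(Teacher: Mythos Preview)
Your proposal is correct and follows exactly the approach the paper takes: the paper simply asserts the identifications $D_x = d \otimes \chi$ and $D_y = 2\partial \otimes \chi$ in the paragraph immediately preceding the proposition and treats the statement as an immediate consequence, whereas you have carefully written out the verification of those identifications using the explicit Clifford action from Definition~\ref{def:spinor}. There is nothing to add.
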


%%%%%%%%%%  3    %%%%%%%%%%%%%%%%%
\section{An analogue of Casselman-Osborne Theorem} \label{sec:voganconjecture}
%%%%%%%%%%%%%%%%%%%%%%%%%%%%%%%%
In this section, we prove a version of Casselman-Osborne Theorem for Lie algebra cohomology
of ${\bf H}_{t,c}$ and associated with $D_x$ and $D_y$.
It relates the `central character' of a ${\bf H}_{t,c}$-module $M$ (denoted as $\mc{B}$-character below) to
the central characters of $\bb{C}[W]$-modules $H^{\bullet}(\mf{h}^*,M)$ and $H_{\bullet}(\mf{h},M)$.

We set ${\bf H}={\bf H}_{t,c}$.
Recall that we have the following Poincar\'{e}-Birkhoff-Witt decomposition ${\bf H} \cong S(\mf{h}) \otimes \bb{C}[W] \otimes S(\mf{h}^*)$ as vector spaces. We define a $\bb{C}^*$-action on ${\bf H}$ by
$$\lambda \cdot x = \lambda^{-1} x\ , \ \lambda \cdot y = \lambda y\ ,\  \lambda \cdot w = w$$
for all $x \in \mf{h}^*$, $y \in \mf{h}$ and $w \in W$.  We also define a $\bb{C}^*$-action on $C(V) \cong \wedge^{\bullet} \mf{h} \otimes \wedge^{\bullet} \mf{h}^*$ by
$$\lambda \cdot x = \lambda^{-1} x\ , \ \lambda \cdot y = \lambda y$$
for all $x \in \mf{h}^*$, $y \in \mf{h}$.

\begin{definition}\label{def:cstar} \mbox{}We define a subalgebra $\mathrm{\mathbf{A}}$ of ${\bf H}\otimes C(V)$ by setting
$$\mathrm{\mathbf{A}} := ({\bf H} \otimes C(V))^{\bb{C}^*}.$$
\end{definition}

It is easy to check that we have the following  $\Delta(\bb{C}[\widetilde{W}])$-module isomorphism
$$\mathrm{\mathbf{A}} \cong \bigoplus_{k_1 + l_1 = k_2 + l_2} (S^{k_1}(\mf{h}) \otimes \bb{C}[W] \otimes S^{k_2}(\mf{h}^*)) \otimes (\wedge^{l_1} \mf{h} \otimes \wedge^{l_2} \mf{h}^*) \subset {\bf H} \otimes C(V).$$

We have a filtration ${\bf H}^0 \subset {\bf H}^1 \subset {\bf H}^2 \subset \dots$ on ${\bf H}$ by taking $\deg(x) = \deg(y) = 1$ for $x \in \mf{h}^*$, $y \in \mf{h}$ and $\deg(w) =0$ for all $w \in W$. Then the graded algebra is given by $gr({\bf H}) \cong {\bf H}_{0,0} \cong S(V) \rtimes \bb{C}[W]$. With the filtration on ${\bf H}$, define the filtration $\mathrm{\mathbf{A}}^{0} \subset \mathrm{\mathbf{A}}^{1} \subset \dots$ of $\mathrm{\mathbf{A}}$, where $\mathrm{\mathbf{A}}^{n} = \mathrm{\mathbf{A}} \cap {\bf H}^n \otimes C(V)$. So we have a graded algebra
$$gr(\mathrm{\mathbf{A}}) \cong \bigoplus_{k_1 + l_1 = k_2 + l_2} (S^{k_1}(\mf{h}) \otimes S^{k_2}(\mf{h}^*))\rtimes \bb{C}[W] \otimes (\wedge^{l_1} \mf{h} \otimes \wedge^{l_2} \mf{h}^*)\ \ \ \subset\ \ \  {\bf H}_{0,0} \otimes C(V).$$

%(b)\ The above definition of $\mathrm{\mathbf{A}}$ coincides with $\mathrm{\mathbf{A}}:= Z_{{\bf H} \otimes C(V)}(\widetilde{\Omega}_{\bf H})$ defined by Ciubotaru \cite{C1}
%in case of rational Cherednik algebras with $t \neq 0$. Indeed, by Equation \eqref{id:grad}, if $f_1 \in S^{k_1}(\mf{h})$, $f_2 \in S^{k_2}(\mf{h}^*)$, $g_1 \in \wedge^{l_1}\mf{h}$ and $g_2 \in \wedge^{l_2}\mf{h}^*$, then
%\begin{align} \label{eq:xygrade}
%[\widetilde{\Omega}_{{\bf H}}, f_1f_2 \otimes g_1g_2] = -2t(k_1 + l_1 - k_2 - l_2)f_1f_2 \otimes g_1g_2.
%\end{align}
%Therefore, $f_1f_2 \otimes g_1g_2 \in \mathrm{\mathbf{A}}$ iff $k_1 + l_1 = k_2 + l_2$. \\

By definition, we have $\Delta(\bb{C}[\widetilde{W}]), D_x, D_y$ are all contained in $\mathbf{A}$.
Therefore, $\widetilde{W}$ acts on $\mathrm{\mathbf{A}}$ by conjugation.
We denote by $\mathrm{\mathbf{A}}^{\widetilde{W}}$ the subalgebra of $\widetilde{W}$-invariants in $\mathrm{\mathbf{A}}$. By Proposition \ref{prop:dpartial}(b), $D_x$, $D_y \in \mathrm{\mathbf{A}}^{\widetilde{W}}$. Define the maps $\delta_d, \delta_{\partial}:  \mathrm{\mathbf{A}}^{\widetilde{W}} \to \mathrm{\mathbf{A}}^{\widetilde{W}}$ by
\begin{align}\label{eq:delta}
\delta_da = D_xa - \epsilon(a)D_x,\ \ \ \ \ \delta_{\partial}a = D_y a - \epsilon(a)D_y,
\end{align}
where $\epsilon(a) = a$ if $a \in {\bf H} \otimes C^{even}(V)$ and $\epsilon(a) = -a$ if $a \in {\bf H} \otimes C^{odd}(V)$.

%\begin{remark}
%In \cite{BCT} or \cite{C1}, $\mathrm{\mathbf{A}^{\mathrm{triv}}}$, $\mathrm{\mathbf{A}^{\det}}$ are introduced as follows:
%\begin{align*}
%\mathrm{\mathbf{A}^{\mathrm{triv}}}& := \{ a \in \mathrm{\mathbf{A}}\ |\ \Delta(\widetilde{w}) \cdot a = a \cdot \Delta(\widetilde{w}) \} = \mathrm{\mathbf{A}}^{\widetilde{W}},\\
%\mathrm{\mathbf{A}^{\det}}& :=\{ a \in \mathrm{\mathbf{A}}\ |\ \Delta(\widetilde{w}) \cdot a = {\det}_V(p(\widetilde{w}))a \cdot \Delta(\widetilde{w}) \}.
%\end{align*}
%However, in the setting of rational Cherednik algebras, $\det_V(p(\widetilde{w})) = 1$ since $\det_{\mf{h}}(s)\cdot \det_{\mf{h}^*}(s) = 1$ for all $s \in \mc{R}$.  Hence $\mathrm{\mathbf{A}^{\mathrm{triv}}} = \mathrm{\mathbf{A}^{\det}} = \mathrm{\mathbf{A}}^{\widetilde{W}}$.
%\end{remark}

The main theorem of this section is the following:
\begin{theorem}\label{thm:voganmorphism}
For $\delta_d, \delta_{\partial}:  \mathrm{\mathbf{A}}^{\widetilde{W}} \to \mathrm{\mathbf{A}}^{\widetilde{W}}$ defined in \eqref{eq:delta}, we have
$$\ker \delta_{d} = \mathrm{im}\ \delta_{d} \oplus  \Delta(\bb{C}[\widetilde{W}]^{\widetilde{W}}).$$
The results hold analogously for $\delta_{\partial}$.
\end{theorem}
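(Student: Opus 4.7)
The plan is to follow the filtration strategy originally used by Huang-Pand\v{z}i\'{c} \cite{HP2} in the semisimple Lie algebra setting and adapted to Hecke-type algebras in \cite{BCT} and \cite{C1}. First I would verify that $\delta_d$ is genuinely a differential: writing $\delta_d(a) = [D_x, a]_{\mathrm{super}}$ with respect to the Clifford $\bb{Z}/2$-grading, the identity $D_x^2 = 0$ from Theorem \ref{thm:dsquare}(c) yields $\delta_d^2 = 0$, while Theorem \ref{thm:dsquare}(b) gives $[D_x, \Delta(\widetilde{w})]_{\mathrm{super}} = 0$ for every $\widetilde{w} \in \widetilde{W}$, so the easy inclusion $\mathrm{im}\ \delta_d + \Delta(\bb{C}[\widetilde{W}]^{\widetilde{W}}) \subseteq \ker \delta_d$ follows. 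The sum is direct because $\delta_d$ strictly raises the filtration inherited from ${\bf H}^{\bullet}$, and any putative non-zero element of the intersection would have a minimal-degree graded piece both in $\mathrm{im}\ \overline{\delta_d}$ and in the image of $\Delta$, contradicting the graded identity established below.

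For the harder inclusion, I would induct on the filtration degree $n$ of $\mathrm{\mathbf{A}}$. Given $a \in \mathrm{\mathbf{A}}^{\widetilde{W}} \cap \mathrm{\mathbf{A}}^n \cap \ker \delta_d$, consider its image $\overline{a} \in gr^n(\mathrm{\mathbf{A}})^{\widetilde{W}}$; the map $\delta_d$ descends to a graded differential $\overline{\delta_d}$ given by super-commutator with $\overline{D}_x = \sum_i x_i \otimes y_i \in gr(\mathrm{\mathbf{A}})$. Assuming the graded analogue
$$\ker \overline{\delta_d} = \mathrm{im}\ \overline{\delta_d} \oplus \Delta(\bb{C}[\widetilde{W}]^{\widetilde{W}})$$
holds on $gr(\mathrm{\mathbf{A}})^{\widetilde{W}}$, one writes $\overline{a} = \overline{\delta_d}(\overline{b}) + \Delta(z)$, lifts $\overline{b}$ to some $b \in \mathrm{\mathbf{A}}^{n-1}$, and applies the inductive hypothesis to $a - \delta_d(b) - \Delta(z) \in \mathrm{\mathbf{A}}^{\widetilde{W}} \cap \mathrm{\mathbf{A}}^{n-1} \cap \ker \delta_d$. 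The base case $n=0$ reduces to $\mathrm{\mathbf{A}}^0 = \bb{C}[W]$, where $\delta_d(w) = \sum_i (x_i w - w x_i) \otimes y_i$ vanishes only on scalars, matching the degree-$0$ part of $\Delta(\bb{C}[\widetilde{W}]^{\widetilde{W}})$.

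The main obstacle will be establishing the graded identity. On $gr(\mathrm{\mathbf{A}})$, the symbol $\overline{D}_x$ and its partner $\overline{D}_y = \sum_i y_i \otimes x_i$ are both odd and nilpotent of order two, and the anti-commutator $\Omega := \overline{D}_x \overline{D}_y + \overline{D}_y \overline{D}_x$ is $\widetilde{W}$-invariant and acts semisimply on $gr(\mathrm{\mathbf{A}})$. Following the Kostant-style harmonic picture exploited in \cite{C1} for the full Dirac operator, the cohomology of $\overline{\delta_d}$ is captured by canonical harmonic representatives lying in $\ker \Omega \cap \ker \overline{\delta_d}$. The technical crux is identifying the $\widetilde{W}$-invariant harmonic cocycles exactly with $\Delta(\bb{C}[\widetilde{W}]^{\widetilde{W}})$: this requires handling simultaneously the Clifford super-structure on $C(V)$, the PBW decomposition ${\bf H} \cong S(\mf{h}) \otimes \bb{C}[W] \otimes S(\mf{h}^*)$, and the $\widetilde{W}$-equivariance, in close parallel to the full Dirac computation in \cite{C1}. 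The analogous statement for $\delta_{\partial}$ then follows by the symmetry exchanging $\mf{h}$ with $\mf{h}^*$ and $D_x$ with $D_y$, with no further graded computation required.
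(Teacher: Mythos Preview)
Your filtration-and-induction skeleton is correct and matches the paper's proof exactly: verify $\delta_d^2 = 0$, establish the easy inclusion, reduce to the associated graded, and climb back up by induction on filtration degree. The divergence---and a genuine gap---is entirely in how you propose to handle the graded identity $\ker \overline{\delta_d} = \mathrm{im}\ \overline{\delta_d} \oplus \Delta(\bb{C}[\widetilde{W}]^{\widetilde{W}})$.

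Your harmonic Laplacian carries no information. In $gr(\mathrm{\mathbf{A}}) \subset (S(V) \rtimes \bb{C}[W]) \otimes C(V)$ the factors $x_i, y_j \in S(V)$ already commute, so
\[
\Omega = \overline{D}_x\overline{D}_y + \overline{D}_y\overline{D}_x \;=\; \sum_{i,j} x_i y_j \otimes (y_i x_j + x_j y_i) \;=\; -2\sum_i x_i y_i \otimes 1,
\]
and $\sum_i x_i y_i \in S(V)^W$ is central in $S(V)\rtimes\bb{C}[W]$. Hence $\Omega$ is central in $gr(\mathrm{\mathbf{A}})$. By super-Jacobi the operator $\overline{\delta_d}\,\overline{\delta_{\partial}} + \overline{\delta_{\partial}}\,\overline{\delta_d}$ is super-commutator with $\Omega$, which is therefore identically zero: every cocycle is ``harmonic'' and nothing is cut down. (This is exactly where the half-Dirac situation differs from the full Dirac computation in \cite{C1}: there $D^2$ picks up a genuine $\Delta(\bb{C}[\widetilde{W}])$-term which survives at the graded level and acts nontrivially. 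Here $D_x^2 = 0$ kills that mechanism.) Nor can you rescue this by letting $\Omega$ act by left multiplication: multiplication by a nonunit in the integral domain $S(V)$ is injective but not surjective, hence not semisimple.

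The paper's argument at this step is purely algebraic and bypasses any Laplacian. One writes $a \in \ker\overline{\delta_d}$ along the PBW basis as $a = \sum_i a_i$ with each $a_i$ supported on a single group element $w_i \in W$; since $\overline{\delta_d}$ does not mix group elements, each $a_i$ is separately a cocycle. Right-multiplying by $\Delta(\widetilde{w_i}^{-1})$ strips off the group part and lands the problem in $S(\mf{h})\otimes S(\mf{h}^*)\otimes \wedge^{\bullet}\mf{h}\otimes\wedge^{\bullet}\mf{h}^*$, where $\overline{\delta_d}$ becomes the standard Koszul differential on the $S(\mf{h}^*)\otimes\wedge^{\bullet}\mf{h}^*$ factor. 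Acyclicity of the Koszul complex in positive degree forces $a_i\Delta(\widetilde{w_i}^{-1}) \in \mathrm{im}\ \overline{\delta_d} \oplus \bb{C}$, and reassembling yields the decomposition. So the ingredients you list (PBW, Clifford structure, $\widetilde{W}$-equivariance) do all appear, but the engine is Koszul acyclicity, not Hodge theory.

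A small side point: your base case is mis-stated. The filtration is only on the $\mathbf{H}$-factor, so $\mathrm{\mathbf{A}}^0 = \bb{C}[W]\otimes \bigoplus_l \wedge^l\mf{h}\otimes\wedge^l\mf{h}^*$, not $\bb{C}[W]$; the paper simply absorbs degree zero into the graded statement rather than treating it separately.
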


For the rest of this section, we will only prove the theorem for $\delta_d$. The proof of $\delta_{\partial}$ is analogous to that of $\delta_d$.

\begin{lemma}\label{lem:subalg} \mbox{} We have\\
(a)\ $\delta_d^2 = 0$ and consequently $\mathrm{im}\ \delta_d \subset \ker \delta_d$.\\
(b)\ The map $\delta_d$ is an odd derivation, i.e. $\delta_d(ab) = \delta_d(a)b + \epsilon(a) \delta_d(b)$. Therefore, if $a, b \in \ker \delta_d$, $ab \in \ker \delta_d$ and $\ker \delta_d$ is a subalgebra of $\mathrm{\mathbf{A}}^{\widetilde{W}}$.\\
\end{lemma}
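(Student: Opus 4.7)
The key observation I would start from is that $D_x = \sum_i x_i \otimes y_i$ is homogeneous of odd Clifford degree (each $y_i$ has Clifford degree one), so $\epsilon(D_x) = -D_x$. With respect to the $\mathbb{Z}/2$-grading on $\mathbf{A}$ induced by the Clifford grading, the operator $\delta_d$ is then nothing but the graded (super) commutator with $D_x$: for $a$ Clifford-homogeneous,
\begin{equation*}
\delta_d(a) \;=\; D_x a - \epsilon(a) D_x \;=\; [D_x, a]_s.
\end{equation*}
Once this is recognized, both (a) and (b) become formal consequences of standard super-algebra identities, and the only things to actually verify are that $\delta_d$ really does land in $\mathbf{A}^{\widetilde{W}}$ and that everything in sight squares to zero.

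For part (b), I would just compute: take $a$ Clifford-homogeneous and $b$ arbitrary, and expand
\begin{equation*}
\delta_d(a) b + \epsilon(a)\,\delta_d(b) \;=\; \bigl(D_x a - \epsilon(a) D_x\bigr) b + \epsilon(a)\bigl(D_x b - \epsilon(b) D_x\bigr).
\end{equation*}
The two middle terms $-\epsilon(a) D_x b$ and $+\epsilon(a) D_x b$ cancel, and what remains is $D_x ab - \epsilon(a)\epsilon(b)\, ab D_x = D_x(ab) - \epsilon(ab) D_x = \delta_d(ab)$, since $\epsilon$ is multiplicative on Clifford-homogeneous elements. So $\delta_d$ is an odd derivation. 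The subalgebra statement for $\ker \delta_d$ then follows immediately from the derivation identity: if $\delta_d(a) = \delta_d(b) = 0$, then $\delta_d(ab) = 0$.

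For part (a), the cleanest path is the graded Jacobi identity, which gives $\delta_d^2 = [D_x,[D_x,\cdot]_s]_s = \tfrac12 [[D_x,D_x]_s,\cdot]_s$; since $D_x$ is odd, $[D_x,D_x]_s = 2 D_x^2$, which vanishes by Theorem \ref{thm:dsquare}(c). If one prefers to avoid invoking Jacobi, a two-line direct check works: for even $a$, $\delta_d^2(a) = D_x(D_x a - a D_x) + (D_x a - a D_x)D_x = D_x^2 a - a D_x^2 = 0$, and for odd $a$ the signs rearrange into the same cancellation. Either way, $\mathrm{im}\,\delta_d \subset \ker \delta_d$ is automatic.

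The remaining bookkeeping is to note that $\delta_d$ actually preserves $\mathbf{A}^{\widetilde{W}}$, so the statement makes sense. This is because $D_x$ commutes with $\Delta(\mathbb{C}[\widetilde{W}])$ by Theorem \ref{thm:dsquare}(b), hence conjugation by any $\Delta(\widetilde{w})$ commutes with left and right multiplication by $D_x$, so $\delta_d$ carries $\widetilde{W}$-invariants to $\widetilde{W}$-invariants. There is no real obstacle here; the entire lemma is essentially the observation that $\delta_d$ is the differential of the super-commutator complex attached to the odd, square-zero element $D_x \in \mathbf{A}^{\widetilde{W}}$, and the proof of $\delta_{\partial}$ proceeds identically since $D_y$ is also odd and satisfies $D_y^2 = 0$.
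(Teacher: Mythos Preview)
Your proof is correct and follows essentially the same approach as the paper: both (a) and (b) are direct computations using the definition $\delta_d(a) = D_x a - \epsilon(a)D_x$, the multiplicativity of $\epsilon$, and the vanishing $D_x^2 = 0$. The paper's proof is terser---for (a) it simply writes $\delta_d^2 a = D_x^2 a - a D_x^2 = 0$ (suppressing the parity case-split you spell out), and for (b) it runs the same four-term expansion in the opposite direction---but your added framing of $\delta_d$ as the super-commutator $[D_x,\cdot]_s$ and your remark on $\widetilde{W}$-invariance are helpful context rather than a genuinely different route.
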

\begin{proof}\mbox{} (a) By the definition of $\delta_d$, $\delta_d^2a = D_x^2 a -aD_x^2 = 0- 0 = 0$.\\
(b) $\delta_d(ab) = D_x(ab) - \epsilon(ab)D_x = D_xab - \epsilon(a)D_xb + \epsilon(a)D_xb - \epsilon(a)\epsilon(b)D_x = \delta_d(a)b + \epsilon(a) \delta_d(b)$.
\end{proof}

Note that the action $\delta_d$ increases the filtration by $1$, while the action of $\Delta(\widetilde{w})$ preserves the filtration. So $gr({\mathrm{\mathbf{A}}})^{\widetilde{W}} = gr(\mathrm{\mathbf{A}}^{\widetilde{W}})$, and $\delta_d$ descend to the map
$$\overline{\delta_d}: gr(\mathrm{\mathbf{A}})^{\widetilde{W}} \to gr(\mathrm{\mathbf{A}})^{\widetilde{W}}.$$

\begin{lemma} \label{lem:voganinclusion}  Let $\overline{\delta_d}: gr(\mathrm{\mathbf{A}})^{\widetilde{W}} \to gr(\mathrm{\mathbf{A}})^{\widetilde{W}}$ be defined as in Equation \eqref{eq:delta}. Then the following holds: \\
(a)\ $\Delta(\bb{C}[\widetilde{W}]^{\widetilde{W}})  \subset \ker \overline{\delta_d}$.\\
(b)\ $\mathrm{im}\ \overline{\delta_d} \cap \Delta(\bb{C}[\widetilde{W}]^{\widetilde{W}}) = 0$, hence
$$\mathrm{im}\ \overline{\delta_d}\ \oplus\ \Delta(\bb{C}[\widetilde{W}]^{\widetilde{W}}) \subset \ker \overline{\delta_d}.$$
\end{lemma}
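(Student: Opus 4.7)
The plan is to prove (a) by showing that $\delta_d$ actually kills every element of $\Delta(\bb{C}[\widetilde{W}])$, not merely the $\widetilde{W}$-invariants, and to derive (b) from a one-line filtration-degree count. The direct-sum inclusion then follows by combining (a), (b), and the identity $\overline{\delta_d}^2 = 0$, which is the associated graded of Lemma \ref{lem:subalg}(a).

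For (a), the key observation is a parity argument. From formula \eqref{taus}, each generator $\mu_s$ of $\widetilde{W}$ is a sum of a scalar and the degree-$2$ element $\alpha_s\alpha_s^{\vee}$, so $\mu_s \in C^{even}(V)$. Since $C^{even}(V)$ is a subalgebra, the entire image of $\widetilde{W}$ in $C(V)^\times$ is even, and therefore $\Delta(\widetilde{w}) = p(\widetilde{w}) \otimes \widetilde{w}$ is even in the $\bb{Z}/2$-grading on ${\bf H} \otimes C(V)$. The parity operator $\epsilon$ acts trivially on such an element, so
$$\delta_d(\Delta(\widetilde{w})) = D_x\,\Delta(\widetilde{w}) - \Delta(\widetilde{w})\,D_x = [D_x, \Delta(\widetilde{w})] = 0$$
by Theorem \ref{thm:dsquare}(b). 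Restricting to the $\widetilde{W}$-invariants and passing to the associated graded, the same vanishing holds for $\overline{\delta_d}$, giving (a).

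For (b), I would compare filtration degrees. Since $p(\widetilde{w}) \in \bb{C}[W]$ has ${\bf H}$-filtration $0$, and the Clifford tensor factor does not enter the definition of the filtration on $\mathbf{A}$, the algebra $\Delta(\bb{C}[\widetilde{W}])$ sits entirely in $\mathbf{A}^0$, hence in the degree-$0$ part of $gr(\mathbf{A})$. On the other hand $D_x = \sum_i x_i \otimes y_i$ lies in $\mathbf{A}^1 \setminus \mathbf{A}^0$, so $\delta_d$ strictly raises the filtration by $1$, and consequently $\overline{\delta_d}$ is a map of graded degree $+1$ on $gr(\mathbf{A})^{\widetilde{W}}$. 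Thus $\mathrm{im}\,\overline{\delta_d}$ is concentrated in strictly positive degrees, and its intersection with $\Delta(\bb{C}[\widetilde{W}]^{\widetilde{W}}) \subset gr_0(\mathbf{A})^{\widetilde{W}}$ must vanish. The claimed direct-sum inclusion then drops out of (a) combined with $\overline{\delta_d}^2 = 0$.

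There is no substantive obstacle here; the whole argument is essentially bookkeeping. One must keep three structures separate: the ${\bf H}$-filtration (which defines $gr$), the Clifford parity (which defines $\epsilon$), and the $\bb{C}^*$-grading (which cuts out $\mathbf{A}$). The only point worth verifying carefully is that the Pin-type embedding of $\widetilde{W}$ lands inside $C^{even}(V)$, so that the super-commutator defining $\delta_d$ reduces to the ordinary commutator on $\Delta(\bb{C}[\widetilde{W}])$.
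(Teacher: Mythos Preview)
Your proof is correct and follows essentially the same approach as the paper's. Part (a) is identical (parity of $\mu_s$ reduces $\delta_d$ to the ordinary commutator, which vanishes by Theorem~\ref{thm:dsquare}(b)); for part (b) the paper phrases the disjointness as ``every summand in $\mathrm{im}\,\overline{\delta_d}$ carries an $x_i$ in its $S(V)$-component,'' which is exactly your graded-degree $+1$ observation read off in the PBW basis.
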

\begin{proof}
(a) For all $\widetilde{w} \in \widetilde{W}$, Theorem \ref{thm:dsquare}(b) says that $\Delta(\widetilde{w})D_x - D_x\Delta(\widetilde{w}) = 0$. Also, recall $\widetilde{w}$ is generated by $\pm \mu_s \in C^{even}(V)$. Hence $\epsilon(\Delta(\widetilde{w})) = \Delta(\widetilde{w})$ and $\Delta(\bb{C}[\widetilde{W}]^{\widetilde{W}}) \subset \ker \overline{\delta_d}$.\\

(b) We have already seen that $\overline{\delta_d}^2 = 0$. So $\mathrm{im}\ \overline{\delta_d} \subset \ker \overline{\delta_d}$. On the other hand, every summand in $\mathrm{im}\ \delta_d$ must have an $x_i \in \mf{h}^*$ in its $S(V)$-component, while every element in $\Delta(\bb{C}[\widetilde{W}]^{\widetilde{W}})$ does not contain any $\mf{h}^*$ factor. Hence they must be mutually disjoint.
\end{proof}

\begin{proposition}\label{prop:bardeltadecomp}  We have
$$\ker \overline{\delta_d} = \mathrm{im}\ \overline{\delta_d}\ \oplus\  \Delta(\bb{C}[\widetilde{W}]^{\widetilde{W}}).$$
\end{proposition}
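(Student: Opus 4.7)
The plan is to compute the cohomology of $\overline{\delta_d}$ on all of $gr(\mathbf{A})$ before taking $\widetilde{W}$-invariants, identify it with the image of $\Delta$, and then apply $(-)^{\widetilde{W}}$. By Lemma \ref{lem:voganinclusion} the sum $\mathrm{im}\,\overline{\delta_d}\oplus\Delta(\bb{C}[\widetilde{W}]^{\widetilde{W}})$ is already contained in $\ker\overline{\delta_d}$; only the reverse inclusion remains. A direct graded super-commutator calculation, using commutativity of $S(V)$ in the graded algebra, the crossed-product relation $wx=w(x)w$, and basis independence of $\sum_i x_i\otimes y_i$, yields
$$\overline{\delta_d}(gw\otimes a_2)=\sum_i gx_iw\otimes\bigl(y_ia_2-(-1)^{|a_2|}a_2w^{-1}(y_i)\bigr)$$
for $g\in S(V)$, $w\in W$, $a_2\in C(V)$. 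This shows in particular that the decomposition of $gr(\mathbf{A})$ indexed by the ``$W$-component on the ${\bf H}$-side'' is preserved by $\overline{\delta_d}$, so the cohomology splits as a sum over $w\in W$.

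For fixed $w$ the differential is a $w$-twisted Koszul-type operator, and the key simplification comes from the Pin-group identity $\widetilde{w}^{-1}y_i\widetilde{w}=w^{-1}(y_i)$ inside $C(V)$. Substituting $a_2=b_2\widetilde{w}$ converts the twisted operator into the untwisted super-commutator:
$$\overline{\delta_d}(gw\otimes b_2\widetilde{w})=\Bigl(\sum_i gx_iw\otimes[y_i,b_2]_s\Bigr)\cdot(1\otimes\widetilde{w}),$$
so right multiplication by $\Delta(\widetilde{w})$ intertwines the $w$-complex with the $w=1$ complex. In the untwisted case, identify $C(V)\cong\wedge\mf{h}\otimes\wedge\mf{h}^*$ as $\bb{C}^*$-modules; the super-derivation $[y_i,\cdot]_s$ annihilates $\wedge\mf{h}$ and acts on $\wedge\mf{h}^*$ as $-2\,\iota_{y_i}$. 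The only non-trivial part of $\overline{\delta_d}|_{w=1}$ is then the classical Koszul differential on $S(\mf{h}^*)\otimes\wedge^{\bullet}\mf{h}^*$, whose cohomology is $\bb{C}$ concentrated in bidegree $(0,0)$. The $\bb{C}^*$-invariance constraint $k_1+l_1=k_2+l_2$ then forces $(k_1,l_1)=(0,0)$ as well, so the cohomology of the $w$-component is one-dimensional, spanned by $w\otimes\widetilde{w}=\Delta(\widetilde{w})$.

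Summing over $w\in W$ gives $\ker\overline{\delta_d}=\mathrm{im}\,\overline{\delta_d}\oplus\mathrm{span}\{\Delta(\widetilde{w}):\widetilde{w}\in\widetilde{W}\}$, and the right-hand summand is precisely $\Delta(\bb{C}[\widetilde{W}])$ (the map $\Delta$ identifies $-\widetilde{w}$ with $-\Delta(\widetilde{w})$ and is otherwise faithful). Taking $\widetilde{W}$-invariants is compatible with cohomology since $|\widetilde{W}|$ is invertible, and the conjugation action $\Delta(\widetilde{w}_0)\Delta(\widetilde{w})\Delta(\widetilde{w}_0)^{-1}=\Delta(\widetilde{w}_0\widetilde{w}\widetilde{w}_0^{-1})$ selects class functions on $\widetilde{W}$, yielding precisely $\Delta(\bb{C}[\widetilde{W}]^{\widetilde{W}})$. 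I expect the main technical obstacle to be rigorously carrying out the Koszul-style computation in the presence of both the $W$-twist and the full $\bb{C}^*$-bi-grading, and verifying that the intertwining $b_2\mapsto b_2\widetilde{w}$ respects the Clifford parity $|a_2|$ across the different $w$-components; once this is settled the identification with $\Delta(\bb{C}[\widetilde{W}]^{\widetilde{W}})$ is essentially formal.
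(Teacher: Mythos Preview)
Your proposal is correct and follows essentially the same route as the paper: decompose $gr(\mathbf{A})$ by the $W$-component on the ${\bf H}$-side, untwist each $w$-piece by right multiplication by $\Delta(\widetilde{w})$ (the paper does this via the derivation property $\overline{\delta_d}(a_i\Delta(\widetilde{w_i}^{-1})\cdot\Delta(\widetilde{w_i}))=\overline{\delta_d}(a_i\Delta(\widetilde{w_i}^{-1}))\cdot\Delta(\widetilde{w_i})$, you via the explicit substitution $a_2=b_2\widetilde{w}$), reduce to the Koszul complex on $S(\mf{h}^*)\otimes\wedge^\bullet\mf{h}^*$, use the $\bb{C}^*$-constraint $k_1+l_1=k_2+l_2=0$ to conclude the cohomology is $\bb{C}\cdot\Delta(\widetilde{w})$, and finally pass to $\widetilde{W}$-invariants. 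Your concern about Clifford parity is harmless since each $\widetilde{w}$ lies in $C^{even}(V)$ (being a product of the even elements $\mu_s$), so $(-1)^{|b_2\widetilde{w}|}=(-1)^{|b_2|}$.
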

\begin{proof}
By the above lemma, we just need to show
$$\ker \overline{\delta_d} \subset \mathrm{im}\ \overline{\delta_d}\ \oplus\ \Delta(\bb{C}[\widetilde{W}]^{\widetilde{W}}).$$
Suppose $a \in \ker \overline{\delta_d}$, write $a = a_1 + \dots + a_{|W|} \in gr(\mathrm{\mathbf{A}})^{\widetilde{W}}$ where $a_i$ is the sum of the elements of the form $fw_i \otimes g$ for $f \in S^{k_1}(\mf{h}) \otimes S^{k_2}(\mf{h}^*)$, $w_i \in W$ and $g \in \wedge^{l_1} \mf{h} \otimes \wedge^{l_2}\mf{h}^*$ with $k_1 + l_1 = k_2 + l_2$.\\

For simplicity of notation, we write $\overline{\delta_d}(\alpha) = D_x\alpha - \epsilon(\alpha) D_x$ for all elements $\alpha \in gr(\mathrm{\mathbf{A}})$. Then $\overline{\delta_d}(a) = \overline{\delta_d}(a_1) + \dots + \overline{\delta_d}(a_{|W|}) = 0$. Note that $\overline{\delta_d}$ does not change the $w_i$ component on each $a_i$. Hence, we must have
$$\overline{\delta_d}(a_i) = 0, \ \ \text{for all} \ i.$$
The summands in $a_i$ are of the form
$$fw_i \otimes g = (f \otimes g\cdot \widetilde{w_i}^{-1})(w_i \otimes \widetilde{w_i}) = [(fw_i \otimes g )\cdot \Delta (\widetilde{w_i}^{-1})]\Delta( \widetilde{w_i})$$
for some $\widetilde{w_i} \in \widetilde{W}$ satisfying $p(\widetilde{w_i}) = w_i$. Now $\overline{\delta_d}(a_i) = 0$ means
$$0 = \overline{\delta_d}[a_i \cdot \Delta(\widetilde{w_i}^{-1})\cdot \Delta(\widetilde{w_i})] = \overline{\delta_d}(a_i \cdot \Delta(\widetilde{w_i}^{-1}))\cdot \Delta(\widetilde{w_i}) \pm (a_i \cdot \Delta(\widetilde{w_i}^{-1})) \cdot \overline{\delta_d}(\Delta( \widetilde{w_i})),$$
but we know $\overline{\delta_d}(\Delta(\widetilde{w_i})) = 0$ since $D_x$ commutes with $\Delta(\bb{C}[\widetilde{W}])$, hence
$$\overline{\delta_d}(a_i \Delta(\widetilde{w_i}^{-1})) = 0,\ \ \ \ a_i \Delta(\widetilde{w_i}^{-1}) \in (S^{k_1}(\mf{h}) \otimes S^{k_2}(\mf{h}^*)) \otimes (\wedge^{l_1} \mf{h} \otimes \wedge^{l_2}\mf{h}^*),$$
with $k_1 + l_1 = k_2 + l_2$.\\

It follows that $\overline{\delta_d}$ is  the differential in the Koszul complex
$$\bigoplus_m \bigoplus_{k_2 + l_2 = m} S^{k_2}(\mf{h}^*) \otimes \wedge^{l_2} \mf{h}^*,$$
which has cohomology $\bb{C}$ on degree $m = 0$ and zero at other degrees. Therefore, byupon restricting $\overline{\delta_d}$ to
$$\bigoplus_m \bigoplus_{k_1 + l_1 = k_2 + l_2 = m} S^{k_1}(\mf{h}) \otimes \wedge^{l_1} \mf{h}^*  \otimes S^{k_2}(\mf{h}^*) \otimes \wedge^{l_2} \mf{h}^*,$$ we have
$$\ker \overline{\delta_d}= \mathrm{im}\ \overline{\delta_d} \oplus \bigoplus_{k_1 + l_1 = 0} S^{k_1}(\mf{h}) \otimes \wedge^{l_1}(\mf{h}) \otimes \bb{C} (1 \otimes 1) = \mathrm{im}\ \overline{\delta_d} \oplus \bb{C}(1 \otimes 1).$$
In particular, we have
$$a_i \Delta(\widetilde{w_i}^{-1}) = \overline{\delta_d} z_i + \beta_i(1 \otimes 1),$$
where $\beta_i \in \bb{C}$ and $a_i = \overline{\delta_d}(z_i \Delta(\widetilde{w_i})) + \beta_i \Delta(\widetilde{w_i})$. Hence,
$$a = \sum_i a_i = \overline{\delta_d}(\sum_i z_i \Delta(\widetilde{w_i})) + \sum_i \beta_i \Delta(\widetilde{w_i}).$$
Therefore, $\sum_i \beta_i \Delta(\widetilde{w_i})$ must be $\widetilde{W}$-invariant, i.e. $\sum_i \beta_i \Delta(\widetilde{w_i}) \in  \Delta(\bb{C}[\widetilde{W}]^{\widetilde{W}})$ and the proposition is proved.
\end{proof}

To finish the proof of Theorem \ref{thm:voganmorphism}, one needs to remove the $^{-}$'s in Proposition \ref{prop:bardeltadecomp}:\\

\noindent \textit{Proof of Theorem \ref{thm:voganmorphism}.} We have already shown in Lemma \ref{lem:voganinclusion} that $\mathrm{im}\ \delta_d \oplus \bb{C}[\widetilde{W}]^{\widetilde{W}} \subset \ker \delta_d$ so we check the reverse inclusion. Suppose $a \in \mathrm{\mathbf{A}}^{\widetilde{W},n}$ is in $\ker \delta_d$, then $\overline{a} \in \ker \overline{\delta_d}$ and $\overline{\delta_d}(\overline{a}) = 0$. By Proposition \ref{prop:bardeltadecomp}, there exists $\overline{b} \in gr(\mathrm{\mathbf{A}}^{\widetilde{W}})_{n-1}$ and $s \in \bb{C}[\widetilde{W}]^{\widetilde{W}}$ such that
$$\overline{a} = \overline{\delta_d}\bar{b} + \Delta(s).$$
Pick $b \in \mathrm{\mathbf{A}}^{\widetilde{W},n-1}$ such that $gr(b) = \overline{b}$.  Then
$$\overline{a - \delta_d b - \Delta(s)} = 0$$
and hence $a - \delta_d b - \Delta(s) \in \mathrm{\mathbf{A}}^{\widetilde{W},n-1}$. Note that
$$a - \delta_d b - \Delta(s) \in \ker \delta_d.$$
By induction on $n$, we have $a - \delta_d b - \Delta(s) \in \mathrm{im}\ \delta_d \oplus \bb{C}[\widetilde{W}]^{\widetilde{W}}$. Thus, $a \in \mathrm{im}\ \delta_d \oplus \bb{C}[\widetilde{W}]^{\widetilde{W}}$ as required.
\qed \\

Let $\mc{B}$ be an abelian subalgebra of $\ker \delta_{d}\cap ({\bf H} \otimes C^{even}(V))$. Then, by definition of $\delta_d$, $\mc{B}$ commutes with $D_x$ and $\Delta(\bb{C}[\widetilde{W}])$. So $\ker D_x/ \mathrm{im}\ D_x$ is naturally a $\bb{C}[\widetilde{W}] \otimes \mc{B}$-module.
\begin{theorem}\label{cor:voganmorphism} Let $\zeta_{d}: \mc{B} \to \Delta(\bb{C}[\widetilde{W}]^{\widetilde{W}})$ be the restriction to $\mc{B}$ of the projection map given by Theorem \ref{thm:voganmorphism}. Then we have\\
(a) $\zeta_d$ is an algebra homomorphism.\\
(b) Suppose that $\widetilde{\sigma} \otimes \beta$ is an isotypical component
in $\ker D_x/ \mathrm{im}\ D_x$, where $\widetilde{\sigma}$ is an irreducible $\widetilde{W}$-module and $\beta$ is a $\mc{B}$-character.
Then the morphism
$$\zeta_d^*:\mathrm{Irr}(\widetilde{W}) \to \mathrm{Spec}\ \mc{B}$$ satisfies the following condition
$$\zeta_d^*(\widetilde{\sigma}) = \beta.$$
The results hold analogously by replacing $D_x$ with $D_y$.
\end{theorem}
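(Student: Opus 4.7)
The plan is to deduce both parts from Theorem \ref{thm:voganmorphism} together with the odd derivation property of $\delta_d$ (Lemma \ref{lem:subalg}(b)). For part (a), I would first verify that $\mathrm{im}\ \delta_d$ is a two-sided ideal in the subalgebra $\ker \delta_d \subset \mathbf{A}^{\widetilde{W}}$. Indeed, for $a \in \ker \delta_d$ and $c \in \mathbf{A}^{\widetilde{W}}$, the Leibniz identity $\delta_d(ac) = \delta_d(a)c + \epsilon(a)\delta_d(c) = \epsilon(a)\delta_d(c)$ shows $a\cdot\delta_d(c) = \pm\delta_d(ac) \in \mathrm{im}\ \delta_d$, and a symmetric computation handles $\delta_d(c)\cdot a$. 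Theorem \ref{thm:voganmorphism} then identifies the quotient $\ker \delta_d / \mathrm{im}\ \delta_d$ as a vector space with $\Delta(\bb{C}[\widetilde{W}]^{\widetilde{W}})$; since $\Delta$ is multiplicative (from $\widetilde{W} \hookrightarrow \mathrm{Pin}(V) \hookrightarrow C(V)^\times$), the identification is an algebra isomorphism, and $\zeta_d$ is realized as the restriction to $\mc{B}$ of the algebra projection $\ker \delta_d \twoheadrightarrow \Delta(\bb{C}[\widetilde{W}]^{\widetilde{W}})$.

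The crux of part (b) is the claim that $\delta_d(c)$ acts by zero on $\ker D_x / \mathrm{im}\ D_x$ for every $c \in \mathbf{A}^{\widetilde{W}}$. For $m \in \ker D_x$, a direct computation gives
$$\delta_d(c)\cdot m \;=\; D_x(cm) - \epsilon(c)\,D_x m \;=\; D_x(cm) \;\in\; \mathrm{im}\ D_x,$$
while $D_x\delta_d(c)m = D_x^{\,2}(cm) = 0$, so $\delta_d(c)$ sends $\ker D_x$ into $\mathrm{im}\ D_x \subset \ker D_x$. Writing any $b \in \mc{B}$ in the form $b = \delta_d(c_b) + \Delta(s_b)$ afforded by Theorem \ref{thm:voganmorphism}, it then follows that the action of $b$ on $\ker D_x / \mathrm{im}\ D_x$ coincides with that of $\Delta(s_b) = \zeta_d(b)$.

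To finish, observe that $s_b \in \bb{C}[\widetilde{W}]^{\widetilde{W}}$ lies in the center of $\bb{C}[\widetilde{W}]$, so by Schur's lemma $\Delta(s_b)$ acts on the irreducible $\widetilde{W}$-module $\widetilde{\sigma}$ by a scalar, which is by definition the value $\zeta_d^*(\widetilde{\sigma})(b)$. Matching this with the hypothesized $\mc{B}$-eigenvalue $\beta(b)$ on the $\widetilde{\sigma}\otimes\beta$-isotypical component yields $\zeta_d^*(\widetilde{\sigma})(b) = \beta(b)$ for all $b \in \mc{B}$, as required. The only real bookkeeping obstacle I foresee is tracking the parity sign $\epsilon$ consistently: since $D_x$ is odd in $C(V)$, $\delta_d$ reverses parity, so the evenness of elements of $\mc{B}$ forces $c_b$ to be odd in the decomposition above; but beyond this minor sign-juggling, the theorem is essentially a formal consequence of Theorem \ref{thm:voganmorphism} and the vanishing of $\delta_d(c)$ on the Dirac-type quotient.
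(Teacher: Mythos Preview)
Your proposal is correct and follows essentially the same route as the paper. For part (a) you package the argument as ``$\mathrm{im}\,\delta_d$ is a two-sided ideal in $\ker\delta_d$, hence the projection to the complement $\Delta(\bb{C}[\widetilde{W}]^{\widetilde{W}})$ is an algebra map,'' whereas the paper writes $b_i = \delta_d(a_i) + \zeta_d(b_i)$ and expands $b_1b_2$ directly to see that the cross terms lie in $\mathrm{im}\,\delta_d$; these are the same computation in different clothing. Part (b) is identical to the paper's argument: both write $b = \zeta_d(b) + \delta_d(c)$, observe that $\delta_d(c)$ sends $\ker D_x$ into $\mathrm{im}\,D_x$ (the paper phrases this as $D_x a\cdot\widetilde{\alpha} - \epsilon(a)D_x\widetilde{\alpha} = D_x a\cdot\widetilde{\alpha}$), and then match the scalar from Schur's lemma against the $\mc{B}$-eigenvalue $\beta$.
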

\begin{proof}
We only present the proof for $\delta_d$ below:\\
(a) Let $b_1, b_2 \in \mc{B}$, then $b_i = \delta_d(a_i) + \zeta_d(b_i)$ for some $a_i \in \mathrm{\mathbf{A}}$, so
\begin{align*}
b_1b_2 &= \delta_d(a_1)\delta_d(a_2) + \delta_d(a_1)\zeta_d(b_2) + \zeta_d(b_1)\delta_d(a_2) + \zeta_d(b_1)\zeta_d(b_2)\\
&= \delta_d (a_1\delta_d(a_2) + a_1\zeta_d(b_2) + \zeta_d(b_1)a_2) + \zeta_d(b_1)\zeta_d(b_2).
\end{align*}
The second equality comes from the facts that $\delta_d^2 = 0$, $\delta_d(\Delta(\widetilde{w})) = 0$ and $\delta_d$ is a derivation. Hence by the uniqueness of the decomposition in Theorem \ref{thm:voganmorphism},
$$\zeta_d(b_1b_2) = \zeta_d(b_1)\zeta_d(b_2)$$
and the result follows.\\
(b)\ Let $0 \neq z \in \mc{B}$. By Theorem \ref{thm:voganmorphism}, we have
$$z = \zeta_d(z) + D_x a - \epsilon(a) D_x$$
for some $a \in {\bf H} \otimes C(V)$. Let $\widetilde{\alpha} \neq 0$ be an element in the $\widetilde{\sigma} \otimes \beta$ component of $\ker D_x/ \mathrm{im}\ D_x$.  Then
\begin{align*}
z\cdot \widetilde{\alpha} &= \zeta_d(z)\cdot \widetilde{\alpha} + D_x a \cdot \widetilde{\alpha} - \epsilon(a) D_x \cdot \widetilde{\alpha} \\
\beta(z)\widetilde{\alpha} &= \widetilde{\sigma}(\zeta_d(z))\widetilde{\alpha} + D_x a \cdot \widetilde{\alpha} \\
(\beta(z) - \widetilde{\sigma}(\zeta_d(z)))\widetilde{\alpha} &= D_x a \cdot \widetilde{\alpha},
\end{align*}
where the second equality comes from the fact that $\widetilde{\alpha}$ is an $\widetilde{\sigma} \otimes \beta$ isotypic element in $\ker D_x/\mathrm{im}\ D_x$.
Therefore, the left hand side of the equality is in $\ker D_x/ \mathrm{im}\ D_x$, and the right hand side is in $\mathrm{im}\ D_x$ and hence it must be equal to zero, i.e.
$$(\beta(z) - \widetilde{\sigma}(\zeta_d(z)))\widetilde{\alpha} = 0.$$
However,  $\widetilde{\alpha} \neq 0$, and it follows that $\beta(z) - \widetilde{\sigma}(\zeta_d(z)) = 0$ for all $z \in \mc{B}$ and the result holds.
\end{proof}
With the identification $\ker D_x/\mathrm{im}\ D_x \cong H^{\bullet}(\mf{h}^*,M) \otimes \chi$, the following result is straightforward:
\begin{theorem} \label{cor:wonly}
Retain the notations in Theorem \ref{cor:voganmorphism}. Suppose $\sigma \otimes \beta$ is an isotypical component in  $H^{\bullet}(\mf{h}^*,M)$, where $\sigma$ is a $W$-module and $\beta$ is a $\mc{B}$-character. Then
$$\zeta_d^*(\sigma \otimes \chi) = \beta.$$
The similar statement holds for $H_{\bullet}(\mf{h},M)$ and $\zeta^*_{\partial}$. (Note that $\sigma \otimes \chi$ is an irreducible $\widetilde{W}$-module for any irreducible $W$-module $\sigma$.)
\end{theorem}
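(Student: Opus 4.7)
The plan is to reduce Theorem \ref{cor:wonly} directly to Theorem \ref{cor:voganmorphism}(b) via the $\widetilde{W}$-module isomorphism of Proposition \ref{prop:dpartial}. The bulk of the work has already been done in establishing Theorem \ref{cor:voganmorphism}; all that remains is to translate the statement about the $\widetilde{W}$-module $\ker D_x/\mathrm{im}\ D_x$ into a statement about the $W$-module $H^{\bullet}(\mf{h}^*,M)$.

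First, I would recall from Proposition \ref{prop:dpartial} the identification $\ker D_x / \mathrm{im}\ D_x \cong H^{\bullet}(\mf{h}^*,M) \otimes \chi$ as $\widetilde{W}$-modules. Since $\mc{B}$ lies in $\ker\delta_d \cap ({\bf H}\otimes C^{even}(V))$ and in particular commutes with $D_x$, its action descends to $\ker D_x/\mathrm{im}\ D_x$; moreover, $\mc{B}$ acts trivially on the one-dimensional factor $\chi$, so the isomorphism of Proposition \ref{prop:dpartial} is in fact an isomorphism of $\widetilde{W}\times\mc{B}$-modules, where $\mc{B}$ acts on the right-hand side via its natural action on $H^{\bullet}(\mf{h}^*,M)$.

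Next, I would observe that if $\sigma$ is an irreducible $W$-module, then $\sigma \otimes \chi$ is an irreducible $\widetilde{W}$-module (since $\chi$ is one-dimensional, the tensor product remains irreducible and is genuine because $\chi$ is). Consequently, a $\sigma \otimes \beta$ isotypic component of $H^{\bullet}(\mf{h}^*,M)$ as a $W\times \mc{B}$-module corresponds, under the isomorphism above, to a $(\sigma \otimes \chi) \otimes \beta$ isotypic component of $\ker D_x/\mathrm{im}\ D_x$ as a $\widetilde{W}\times\mc{B}$-module. Applying Theorem \ref{cor:voganmorphism}(b) with $\widetilde{\sigma} = \sigma \otimes \chi$ then yields $\zeta_d^*(\sigma \otimes \chi) = \beta$ immediately.

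The argument for $H_{\bullet}(\mf{h},M)$ and $\zeta_\partial^*$ is entirely parallel, using instead the second isomorphism of Proposition \ref{prop:dpartial} and the analogue of Theorem \ref{cor:voganmorphism} for $D_y$ (which follows from the parenthetical remark in Theorem \ref{thm:voganmorphism}). I do not expect any substantial obstacle here, since Theorem \ref{cor:voganmorphism} already carries all of the analytic content; the only point requiring care is the bookkeeping that tensoring by the one-dimensional character $\chi$ preserves the $\mc{B}$-isotypic decomposition while converting $W$-irreducibles into $\widetilde{W}$-irreducibles.
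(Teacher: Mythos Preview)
Your proposal is correct and follows exactly the approach the paper intends: the paper provides no explicit proof, merely stating that the result is ``straightforward'' once one uses the identification $\ker D_x/\mathrm{im}\ D_x \cong H^{\bullet}(\mf{h}^*,M) \otimes \chi$ from Proposition \ref{prop:dpartial}, which is precisely your reduction to Theorem \ref{cor:voganmorphism}(b). Your proof in fact supplies more detail than the paper does, correctly noting that tensoring by $\chi$ converts the $W$-isotypic decomposition into the $\widetilde{W}$-isotypic decomposition while leaving the $\mc{B}$-character unchanged.
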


%%%%%%%%%%%%  4  %%%%%%%%%%%%%%%%%%
\section{Embedding of Dirac cohomology into Lie algebra cohomology}
%%%%%%%%%%%%%%%%%%%%%%%%%%%%%%%%

In this section, we give a criterion such that the Dirac cohomology $H_D(M)$ of $M$ can be embedded into its Lie algebra (co)homology.

\begin{definition}[\cite{C1}] \label{def:dirac}
Let $D:= D_x + D_y \in {\bf H}_{t,c} \otimes C(V)$. The Dirac cohomology of an ${\bf H}_{t,c}$-module $M$ is defined by
$$H_D(M) := \ker D/\mathrm{im}\ D,$$
where $D: M \otimes S \to M \otimes S$ is defined as in Definition \ref{def:halfdirac}.
\end{definition}

The main theorem of this section is the following:

\begin{theorem}\label{thm:incprop}
If $M$ is an ${\bf H}_{t,c}$-module so that $D^2$ acts semisimply on $M \otimes S$, then we have the
following $\widetilde{W}$-module injective homomorphisms
$$H_D(M) \hookrightarrow  H^{\bullet}(\mf{h}^*, M) \otimes \chi,\ \ \ \ \ H_D(M) \hookrightarrow  H_{\bullet}(\mf{h}, M) \otimes \chi.$$
\end{theorem}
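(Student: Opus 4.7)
The plan is to first use the semisimplicity hypothesis on $D^2$ to reduce to the subspace $V=\ker D^2\subset M\otimes S$, and then to exploit the resulting bicomplex structure on $V$ via a spectral sequence argument.

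First I would decompose $M\otimes S = V \oplus W$ where $V=\ker D^2$ and $W=\mathrm{im}\,D^2$. A direct check using $D_x^2=D_y^2=0$ shows $[D^2,D_x]=[D^2,D_y]=0$, so $D$, $D_x$, $D_y$ all preserve this decomposition. On $W$ the operator $D^2$ is invertible, and letting $N=(D^2)^{-1}|_W$ (which commutes with $D_x$ and $D_y$), any $v\in\ker D_x\cap W$ satisfies
\[v = D^2 N v = D_xD_y Nv + D_yND_x v = D_x(D_yNv),\]
so $v\in\mathrm{im}\,D_x$. An analogous computation (using that $D$ itself is invertible on $W$, as $D^2$ is) shows $H_D$ also vanishes on $W$. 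Hence both $H_D(M)$ and $H^\bullet(\mf{h}^*,M)\otimes\chi = H_{D_x}(M\otimes S)$ may be computed on $V$.

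On $V$ we have $D^2=0$, which combined with $D_x^2=D_y^2=0$ forces $\{D_x,D_y\}=0$. Together with the wedge-degree grading $V=\bigoplus_{k=0}^n V^k$ (inherited from $M\otimes S$, since $D^2$ preserves wedge degree), this makes $(V,D_x,D_y)$ into a bicomplex whose total differential is $D$. The main step is a spectral sequence argument: filtering $V$ by wedge degree yields a $\widetilde{W}$-equivariant spectral sequence converging to $H_D(V)$ with $E_2$-page $H_{D_y}(H_{D_x}(V))$, i.e., $D_x$-cohomology followed by $D_y$-cohomology (the $D_y$-action on $H_{D_x}(V)$ is well-defined thanks to anticommutativity). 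Because the wedge grading is one-dimensional and $D$ shifts degree only by $\pm 1$, no higher differentials $d_r$ with $r\geq 2$ survive, so $E_\infty=E_2$. This endows $H_D(V)$ with a $\widetilde{W}$-equivariant filtration whose associated graded is $H_{D_y}(H_{D_x}(V))$. Because $\widetilde{W}$ is finite, every $\widetilde{W}$-module is semisimple; the filtration splits, giving $H_D(V)\cong H_{D_y}(H_{D_x}(V))$ as $\widetilde{W}$-modules. Moreover, $H_{D_y}(H_{D_x}(V))$, being a $\widetilde{W}$-subquotient of $H_{D_x}(V)$, embeds as a sub-$\widetilde{W}$-module. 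Composing gives the claimed injection $H_D(M)\hookrightarrow H^\bullet(\mf{h}^*,M)\otimes\chi$. The injection into $H_\bullet(\mf{h},M)\otimes\chi$ is obtained symmetrically, filtering in the opposite direction and arriving at a spectral sequence with $E_2$-page $H_{D_x}(H_{D_y}(V))$, a subquotient of $H_{D_y}(V)\cong H_\bullet(\mf{h},M)\otimes\chi$.

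I expect the main obstacle to be setting up the spectral sequence rigorously, since $D$ does not literally preserve the wedge filtration on $V$ (the $D_y$ part strictly lowers it, so one must work with the conventions for a differential of mixed filtration degree). The key technical point is that the one-dimensionality of the wedge grading prevents any higher differentials from being nonzero, forcing $E_\infty=E_2$. The final passage from ``$\widetilde{W}$-subquotient'' to ``$\widetilde{W}$-submodule'' is then standard, relying only on semisimplicity of the category of $\widetilde{W}$-modules.
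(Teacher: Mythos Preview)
Your reduction to $V = \ker D^2$ is correct and matches the paper's setup; your additional observation that both $H_{D_x}$ and $H_D$ vanish on $W = \mathrm{im}\, D^2$ is a clean sharpening. The gap is in the spectral sequence step on $V$. The wedge-degree filtration does \emph{not} make $(V,D)$ into a filtered complex: for the increasing filtration $F_p = \bigoplus_{k\le p} V^k$ one has $D_y(F_p)\subset F_{p-1}\subset F_p$ but $D_x(F_p)\subset F_{p+1}\not\subset F_p$, and the decreasing filtration fails symmetrically. So the standard spectral sequence of a filtered complex is simply undefined here, and your remark about ``one-dimensionality of the wedge grading'' forcing $E_\infty = E_2$ does not address this --- the problem is not higher differentials but that $E_1$ is not produced by the usual mechanism. (The idea can be rescued: choosing a contracting homotopy $h$ for $D_x$ and noting that $D_yh$ is nilpotent on the bounded wedge-graded space $V$, the homological perturbation lemma transfers $D$ to a differential $\delta_\infty = \pi D_y\iota + \pi D_yhD_y\iota + \cdots$ on $H_{D_x}(V)$, so $H_D(V)\cong H_{\delta_\infty}(H_{D_x}(V))$ is a $\widetilde W$-subquotient of $H_{D_x}(V)$, which is all you need. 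But $\delta_\infty$ need not reduce to its leading term, so your stronger claim $H_D(V)\cong H_{D_y}(H_{D_x}(V))$ is unjustified.)

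The paper sidesteps this entirely with an elementary argument. It observes that if $\widetilde m_0+\cdots+\widetilde m_i\in\ker D$ with $\widetilde m_j\in M\otimes\wedge^j\mf{h}$ and $\widetilde m_i\ne 0$, then the top piece $\widetilde m_i$ lies in $\ker D_x\cap\ker D^2$; this yields a $\widetilde W$-injection $\ker D\hookrightarrow \ker D_x\cap V$ (via the associated graded of the wedge filtration on $\ker D$, which $D$ \emph{does} preserve tautologically). On the finite-dimensional space $V$ one then has, for every irreducible $\widetilde W$-module $\nu$,
\[
[\nu:\ker D'] + [\nu:\mathrm{im}\,D'] \;=\; [\nu:V] \;=\; [\nu:\ker D_x'] + [\nu:\mathrm{im}\,D_x'],
\]
so $[\nu:\ker D']\le[\nu:\ker D_x']$ forces $[\nu:\mathrm{im}\,D']\ge[\nu:\mathrm{im}\,D_x']$, whence $[\nu:H_D(M)]\le[\nu:\ker D_x'/\mathrm{im}\,D_x']$ and the injection follows by semisimplicity of $\widetilde W$-modules.
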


We now prove the first inclusion of Theorem \ref{thm:incprop}. The second inclusion can be proved in an analogous way.

\begin{lemma}\label{thm:filter}
If we identify $S$ with $\wedge^{\bullet}\mf{h}$ as vector spaces (Definition \ref{def:spinor}) and define an increasing $\widetilde{W}$-invariant filtration on $\ker D$ in $M \otimes S$ by
$$0 = (\ker D)_{-1} \subset (\ker D)_0 \subset \dots \subset (\ker D)_{n-1} \subset (\ker D)_n = \ker D,$$
with $(\ker D)_i = \ker D \cap (\bigoplus_{p = 0}^i M \otimes \wedge^p \mf{h})$. Then we have an injective $W$-module homomorphism:
$$f: gr(\ker D) \hookrightarrow \ker D_x \cap \ker D^2.$$
\end{lemma}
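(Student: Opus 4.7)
The plan is to define $f$ as the ``top wedge-degree piece'' operator. Any $v \in (\ker D)_i$ decomposes uniquely as $v = v_0 + v_1 + \cdots + v_i$ with $v_j \in M \otimes \wedge^j \mf{h}$. Since $(\ker D)_{i-1}$ consists precisely of those elements of $\ker D$ whose wedge-degree $i$ component is zero, the rule $v \mapsto v_i$ descends to a well-defined linear map $(\ker D)_i / (\ker D)_{i-1} \to M \otimes \wedge^i \mf{h}$, and the same equivalence simultaneously proves injectivity. Summing over $i$ produces the candidate injection $f: gr(\ker D) \hookrightarrow M \otimes S$.

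The key preliminary computation is that, under the identification $S \cong \wedge^{\bullet}\mf{h}$ of Definition \ref{def:spinor}, the formulas of Definition \ref{def:halfdirac} yield
\[ D_x(M \otimes \wedge^p \mf{h}) \subset M \otimes \wedge^{p+1}\mf{h}, \qquad D_y(M \otimes \wedge^p \mf{h}) \subset M \otimes \wedge^{p-1}\mf{h}, \]
i.e.\ $D_x$ raises the wedge degree by one and $D_y$ lowers it by one. To see that $f(v) \in \ker D_x$, I would observe that the wedge-degree $(i+1)$ component of the equation $Dv = 0$ is exactly $D_x v_i$, forcing $D_x v_i = 0$. To see that $f(v) \in \ker D^2$, I would invoke Theorem \ref{thm:dsquare}(c), which gives $D^2 = D_x D_y + D_y D_x$; this expression is wedge-degree preserving, so the equation $D^2 v = 0$ splits across wedge degrees and its degree-$i$ piece yields $D^2 v_i = 0$.

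Finally, Proposition \ref{prop:hstar} shows that every $\widetilde{w} \in \widetilde{W}$ preserves each $\wedge^l \mf{h} \subset S$, so the wedge grading on $M \otimes S$ is $\widetilde{W}$-stable, the whole filtration on $\ker D$ is $\widetilde{W}$-stable, and the projection onto the top graded piece commutes with the $\widetilde{W}$-action. This gives the required $W$-equivariance of $f$.

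There is no substantial obstacle; the entire argument is a filtration/grading bookkeeping exercise. The one point worth emphasizing is that, although $D = D_x + D_y$ itself does not respect the wedge grading on $S$, the composition $D^2$ does, and this is precisely what allows one to pass from $D^2 v = 0$ to $D^2 v_i = 0$ and thereby realize $f(v)$ as an element of $\ker D^2$ rather than merely of $\ker D_x$.
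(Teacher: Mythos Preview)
Your proposal is correct and follows essentially the same argument as the paper: define $f$ on each graded piece by projection onto the top wedge-degree component, use that $D_x$ raises and $D_y$ lowers wedge degree to extract $D_x v_i = 0$ from $Dv = 0$, and use that $D^2 = D_xD_y + D_yD_x$ preserves wedge degree to extract $D^2 v_i = 0$. Your explicit treatment of $\widetilde{W}$-equivariance via Proposition~\ref{prop:hstar} is a small addition; the paper relegates this to a remark after the proof.
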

\begin{remark}
Since the action of $\Delta(\bb{C}[\widetilde{W}])$ on $M \otimes S$ preserves $M \otimes \wedge^{l}\mf{h}$ for each $l$, we have an isomorphism of $\widetilde{W}$-modules $\ker D \cong gr(\ker D)$ and hence the injection of $\widetilde{W}$-modules in the above lemma can be rewritten as:
$$f': \ker D \hookrightarrow \ker D_x \cap \ker D^2.$$
\end{remark}
\begin{proof}
Suppose $\widetilde{m_0} + \dots + \widetilde{m_i} \in (\ker D)_i$, with $\widetilde{m_p} \in M \otimes \wedge^p \mf{h}$, then
$$0 = D(\widetilde{m_0} + \dots + \widetilde{m_i}) = (D_x + D_y)(\widetilde{m_0} + \dots + \widetilde{m_i}) = \sum_{p=0}^i D_y \widetilde{m_p} + \sum_{p=0}^{i-1} D_x \widetilde{m_p} + D_x(\widetilde{m_i}).$$
Note that the last term is the only term in $M \otimes \wedge^{i+1} \mf{h}$, hence $\widetilde{m_i} \in \ker D_x$ and we can define a map
\begin{align*}
f_i: (\ker D)_i/(\ker D)_{i+1} & \to \ker D_x;\\
\widetilde{m_0} + \dots + \widetilde{m_i} & \mapsto \widetilde{m_i}.
\end{align*}
To check the image is in $\ker D^2$, we note that $D^2 = D_xD_y + D_yD_x$ preserves the degrees. Then
$$0 = D^2(\widetilde{m_0} + \dots \widetilde{m_i}) = D^2(\widetilde{m_0}) + \dots + D^2(\widetilde{m_i}),$$
and it follows that $D^2(\widetilde{m_i}) = 0$.
Now we show that $f_i$ is injective.  If
$\widetilde{m}' = \widetilde{m_0}' + \dots + \widetilde{m_i}' \in (\ker D)_i$ such that $f_i(\widetilde{m_0}' + \dots + \widetilde{m_i}') = 0$,
then $\widetilde{m_i}' = 0$. Hence, $\widetilde{m}' \in (\ker D)_{i-1}$. Thus,
 we have an injective map
$$f := \bigoplus_i f_i : gr(\ker D) \hookrightarrow \ker D_x \cap \ker D^2.$$
\end{proof}

\noindent \textit{Proof of Theorem \ref{thm:incprop}}. By hypothesis, we can decompose $M \otimes S$ into $M \otimes S = \ker D^2\ \oplus\ \mathrm{im}\ D^2$. Let $U := \ker D^2$ and $V := \mathrm{im}\ D^2$, then it is obvious that $D$ maps $U$ to $U$ and $V$ to $V$. Also, we have
$$D^2D_x = (D_xD_y + D_yD_x)D_x = D_x(D_xD_y + D_yD_x) = D_xD^2,$$
therefore $D_x$ also maps $U$ to $U$ and maps $V$ to $V$. We write $D'$ and $D_x'$ as restrictions of $D$ and $D_x$ to $U$, and similarly write $D''$ and $D_x''$ as restrictions of $D$ and $D_x$ to $V$.\\

Note that $H_D(M)$ is a quotient of $\ker D \subset \ker D^2 = U$, so we focus on our study to $U$. Since $(D')^2 = 0$, so $\mathrm{im}\ D' \subset \ker D'$ and $H_D(M) = \ker D'/\mathrm{im}\ D'$. Also, $(D_x')^2 = 0$ implies $\mathrm{im}\ D_x' \subset \ker D_x'$. Now for any irreducible $\widetilde{W}$-module $\nu$:
$$[\nu : \ker D']_{\widetilde{W}} + [\nu : \mathrm{im}\ D']_{\widetilde{W}} = [\nu : U]_{\widetilde{W}} = [\nu : \ker D_x']_{\widetilde{W}} + [\nu : \mathrm{im}\ D_x']_{\widetilde{W}}.$$
(Note that $[\nu : U]$ is finite, since $U$ is finite dimensional by Lemma 3.13 of \cite{C1}) \\

By Lemma \ref{thm:filter}, $[\nu : \ker D']_{\widetilde{W}} \leq [\nu : \ker D_x']_{\widetilde{W}}$ for all $\nu$. Hence, $[\nu : \mathrm{im}\ D']_{\widetilde{W}} \geq [\nu : \mathrm{im}\ D_x']_{\widetilde{W}}$ and consequently we have an inclusion of $W$-modules:
$$H_D(M) = \ker D'/\mathrm{im}\ D' \hookrightarrow \ker D_x'/\mathrm{im}\ D_x'$$
\ \ \ Finally, note that $\ker D_x/\mathrm{im}\ D_x = \ker D_x'/\mathrm{im}\ D_x' \oplus \ker D_x''/\mathrm{im}\ D_x''$, so $\ker D_x'/\mathrm{im}\ D_x'$ is naturally a subspace of the following space
$$\ker D_x/\mathrm{im}\ D_x \cong H^{\bullet}(\mf{h}^*, M) \otimes \chi.$$
Thus, the theorem is proved. \qed \\

\begin{remark}
In fact, the hypothesis of Theorem \ref{thm:incprop} is satisfied when $\Omega_{{\bf H}_{t,c}}$ (Definition \ref{eq:euler}) acts semisimply on $M$. This can be seen from the formula of $D^2$ given by Equation (4.24) of \cite{C1}. For the rest of the manuscript, we will apply Theorem \ref{thm:incprop} in this setting.
\end{remark}

%As an application of Theorem \ref{thm:incprop}, we need to focus on a particular class of ${\bf H}$-modules such that $\ker D^2$ is finite-dimensional. Here is an example of such modules:
%\begin{definition}\label{def:ss}
%Recall the definition of $\Omega_{{\bf H}}$ in Definition \ref{eq:euler}. A ${\bf H}$-module $M$ is said to be {\bf $\Omega_{{\bf H}}$-admissible} if $M$ can be decomposed into a direct sum of generalized $\Omega_{{\bf H}}$-eigenspaces, i.e.
%$$M = \bigoplus_{\lambda} M_{\lambda},\ M_{\lambda} = \{ m \in M|\ (\Omega_{{\bf H}} - {\lambda})^nm = 0\}$$
%with each generalized $\Omega_{{\bf H}}$-eigenspace $M_{\lambda}$ being finite-dimensional.
%\end{definition}

We have an alternative proof of Theorem 3.14 in \cite{C1} for ${\bf H}$-modules $M$ which $\Omega_{\bf H}$ acts semisimply upon:
\begin{corollary}
Let ${\bf H} = {\bf H}_{t,c}$ with $t \neq 0$, and $M$ be an $\Omega_{\bf H}$-semisimple module. Suppose that $\widetilde{\sigma} \otimes \beta$ is an isotypical component in $H_D(M)$, where $\widetilde{\sigma}$ is an irreducible $\widetilde{W}$-module and $\beta$ is a $\mc{B}$-character.
Then the morphism $\zeta_d^*:\mathrm{Irr}(\widetilde{W}) \to \mathrm{Spec}\ \mc{B}$ satisfies the following condition
$$\zeta_d^*(\widetilde{\sigma}) = \beta.$$
\end{corollary}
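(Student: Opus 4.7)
The plan is to combine the inclusion furnished by Theorem \ref{thm:incprop} with the version of Vogan's morphism for $D_x$ established in Theorem \ref{cor:voganmorphism}(b). Since $\Omega_{\bf H}$ acts semisimply on $M$, the remark preceding this Corollary ensures that $D^2$ acts semisimply on $M \otimes S$, so Theorem \ref{thm:incprop} provides an injective $\widetilde{W}$-module homomorphism
\[
\iota\colon H_D(M) \hookrightarrow \ker D_x / \mathrm{im}\,D_x \cong H^{\bullet}(\mf{h}^*, M) \otimes \chi.
\]
Because $\mc{B} \subset \ker\delta_d$ commutes with $D_x$ and with $\Delta(\widetilde{W})$, it acts on the target; the hypothesis that $\beta$ is a $\mc{B}$-character on $H_D(M)$ presupposes that $\mc{B}$ also commutes with $D$ (so tacitly $\mc{B} \subset \ker\delta_d \cap \ker\delta_{\partial}$), giving a $\mc{B}$-action on the source as well.

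The key step is to verify that $\iota$ is $\mc{B}$-equivariant. Granting this, any isotypic component $\widetilde{\sigma} \otimes \beta$ of $H_D(M)$ embeds as a $(\widetilde{\sigma} \otimes \beta)$-sub-$\widetilde{W} \otimes \mc{B}$-module of $\ker D_x/\mathrm{im}\,D_x$, so Theorem \ref{cor:voganmorphism}(b) forces $\beta = \zeta_d^*(\widetilde{\sigma})$. To establish equivariance, I would revisit Lemma \ref{thm:filter}, which sends a cocycle $v = v_0 + \dots + v_k \in \ker D$ (graded by $\wedge^{\bullet}\mf{h}$-degree) to the class of $v_k$ modulo $\mathrm{im}\,D_x$. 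Using that $\Omega_{\bf H}$-semisimplicity makes the joint $\widetilde{W} \otimes \mc{B}$-action on $M \otimes S$ semisimple, I would decompose $M \otimes S$ into joint isotypic components (each preserved by $D$, $D_x$, and $D_y$) and rerun the counting argument of Theorem \ref{thm:incprop} on each component, yielding the refined inequality
\[
[\widetilde{\sigma} \otimes \beta : H_D(M)] \;\leq\; [\widetilde{\sigma} \otimes \beta : \ker D_x / \mathrm{im}\,D_x].
\]

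The main obstacle lies precisely in this refinement. Unlike $\widetilde{W}$, the algebra $\mc{B}$ does not preserve the $\wedge^{\bullet}\mf{h}$-grading on $M \otimes S$, so the filtration underlying Lemma \ref{thm:filter} is not $\mc{B}$-stable, and the top-degree map does not obviously carry $\mc{B}$-characters to matching ones modulo boundaries. An alternative route is to write $b = \zeta_d(b) + \delta_d(a)$ for $b \in \mc{B}$ as in Theorem \ref{thm:voganmorphism} and exploit the identity $\delta_d(a)\cdot v = D(a\cdot v) - \delta_{\partial}(a)\cdot v$, valid on any $v \in \ker D$, which reduces the discrepancy between the two $\mc{B}$-actions to the class $[\delta_{\partial}(a)\cdot v] \in H_D(M)$; combining this with the symmetric identity coming from $\mc{B} \subset \ker\delta_{\partial}$ and the $D^2$-semisimple splitting $M \otimes S = \ker D^2 \oplus \mathrm{im}\,D^2$ should force the defect to vanish in $H_D(M)$, completing the proof.
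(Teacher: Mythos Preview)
Your strategy is precisely the paper's: observe (via the Remark) that $\Omega_{\bf H}$-semisimplicity forces $D^2$-semisimplicity, apply Theorem~\ref{thm:incprop} to embed $H_D(M)$ into $\ker D_x/\mathrm{im}\,D_x$, and conclude by Theorem~\ref{cor:voganmorphism}(b). The paper adds only one further ingredient you omit: when $t\neq 0$, the identities $[\Omega_{\bf H},x]=2tx$ and $[\Omega_{\bf H},y]=-2ty$ show that the $\bb{C}^*$-grading defining $\mathbf{A}$ here coincides with the $\Omega_{\bf H}$-grading of \cite{C1}, so that $\mathbf{A}$ (and hence the class of admissible $\mc{B}$) agrees with the one in \cite{C1}. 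This is what makes the corollary an ``alternative proof of Theorem~3.14 in \cite{C1}'' rather than a statement about a possibly different $\mc{B}$.

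The bulk of your proposal wrestles with whether the inclusion of Theorem~\ref{thm:incprop} is $\mc{B}$-equivariant, so that a $\widetilde{\sigma}\otimes\beta$-component of $H_D(M)$ really lands in a $\widetilde{\sigma}\otimes\beta$-component of $\ker D_x/\mathrm{im}\,D_x$. The paper does not address this at all: it writes ``so that $\widetilde{\sigma}\otimes\beta\in\ker D_x/\mathrm{im}\,D_x$'' and moves on. So you are being more scrupulous than the paper's own argument. That said, neither of your proposed repairs is carried to a conclusion: the refinement of the multiplicity count to joint $\widetilde{W}\otimes\mc{B}$-isotypic pieces founders on exactly the obstacle you name (the filtration of Lemma~\ref{thm:filter} is not $\mc{B}$-stable, and your unsupported claim that $\Omega_{\bf H}$-semisimplicity makes the $\mc{B}$-action on $M\otimes S$ semisimple would not by itself rescue it), and your $\delta_d/\delta_\partial$ manipulation ends in ``should force the defect to vanish'' rather than a proof. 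In short: same approach as the paper, with an honest but unresolved discussion of a point the paper elides.
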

\begin{proof}
By  Equation (4.12) of \cite{C1}, we have
\begin{align} \label{id:grad}
[\Omega_{\bf H}, x] = 2tx, \ \ [\Omega_{\bf H}, y] = -2ty.
\end{align}
Then it is easy to see our definition of $\mathrm{\mathbf{A}}$ in Section \ref{sec:voganconjecture} is the same as that in Equations (3.4)-(3.5) of \textit{loc. cit.} when ${\bf H}_{t,c}$ with $t \neq 0$. Suppose $\widetilde{\sigma} \otimes \beta \in H_D(M)$, then by the above Remark, we can apply Theorem \ref{thm:incprop} so that $\widetilde{\sigma} \otimes \beta \in \ker D_x/\mathrm{im} D_x$. Hence the result follows from Theorem \ref{cor:voganmorphism}.
\end{proof}

%%%%%%%%%%%%%%  5  %%%%%%%%%%%%%%%%
\section{Hodge decomposition for unitarizable modules}
%%%%%%%%%%%%%%%%%%%%%%%%%%%%%%%%%%
In this section, we show that  Dirac cohomology is
isomorphic to Lie algebra cohomology up to a twist of a character for unitarizable modules. This follows from a Hodge
decomposition theorem for Dirac operators.

We set ${\bf H} := {\bf H}_{t,c}$ with $t, c(s) \in \bb{R}$ for all $s \in \mc{R}$.
Recall that a $\ast$-action is defined on ${\bf H}$ in \cite{ES}. For a suitable choice of the basis $x_i$, $y_i$ in $\mf{h}^*$ and $\mf{h}$ respectively, $\ast$ has the property that $x_i^* = y_i$, $y_i^* = x_i$.
A ${\bf H}$-module $M$ has a Hermitian $\ast$-invariant form if
$$(z\cdot m_1, m_2)_M = (m_1, z^* \cdot m_2)_M$$
for all $z \in {\bf H}$ and $m_1, m_2 \in M$. Furthermore, if the form is positive definite, we call $M$ a {\bf $\ast$-unitary module}.\\

The main theorem of this section is the following:
\begin{theorem}\label{thm:unitary}
Suppose the $M$ is a $\ast$-unitary ${\bf H}$-module. Then the injection in Theorem \ref{thm:incprop} is an isomorphism of $\widetilde{W}$-modules, i.e.
$$H_D(M) \cong H^{\bullet}(\mf{h}^*, M) \otimes \chi.$$
The same result holds for $H_{\bullet}(\mf{h}, M)$.
\end{theorem}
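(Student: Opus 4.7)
The plan is to prove a Hodge-type decomposition for the Dirac operators on $M \otimes S$ and then read off the desired isomorphism as a formal consequence, paralleling the argument in \cite{HP1} for $(\mf{g},K)$-modules. The concrete content I would establish (matching Theorem C of the introduction) is: $\ker D = \ker D_x \cap \ker D_y = \ker D^2$, $M\otimes S = \ker D\oplus \mathrm{im}\,D_x\oplus \mathrm{im}\,D_y$, and $\ker D_x = \ker D\oplus \mathrm{im}\,D_x$, together with the symmetric statement for $D_y$.

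First, I would place a positive-definite Hermitian inner product on $M\otimes S$ by tensoring the $\ast$-invariant form on $M$ with a $\widetilde{W}$-invariant Hermitian form on the finite-dimensional spinor module $S\cong \wedge^\bullet\mf{h}$, chosen so that wedging and contraction are mutually adjoint. The critical compatibility check is that, with respect to the $\ast$-involution $x_i\mapsto y_i$ on $\mathbf{H}$ and this form on $S$, the half-Dirac operators satisfy $D_x^\ast = c\,D_y$ for some nonzero constant $c$. This is a direct computation from the definitions $D_x=\sum x_i\otimes y_i$ and $D_y=\sum y_i\otimes x_i$.

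Second, using $D_x^2=D_y^2=0$ together with $D_x^\ast = c D_y$, a short calculation yields the K\"ahler-type identity
$$\|Dv\|^2 \;=\; (v,D^2 v) \;=\; c^{-1}\bigl(\|D_x v\|^2+\|D_y v\|^2\bigr),$$
from which $\ker D=\ker D_x\cap \ker D_y=\ker D^2$ is immediate. Combining this with the orthogonality relation $(\mathrm{im}\,D_x)^\perp = \ker D_x^\ast = \ker D_y$ yields the three-way orthogonal decomposition $M\otimes S = \ker D\oplus \mathrm{im}\,D_x\oplus \mathrm{im}\,D_y$ and the binary splittings of $\ker D_x$ and $\ker D_y$ above, all $\widetilde{W}$-equivariantly since the inner product is $\widetilde{W}$-invariant and $D_x, D_y$ commute with $\Delta(\bb{C}[\widetilde{W}])$. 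Reading off, one gets $\ker D_x/\mathrm{im}\,D_x\cong \ker D$ and $H_D(M)=\ker D/(\ker D\cap \mathrm{im}\,D)=\ker D$ by orthogonality, so Proposition \ref{prop:dpartial} delivers $H_D(M)\cong H^\bullet(\mf{h}^\ast, M)\otimes \chi$. The statement for $H_\bullet(\mf{h},M)$ is obtained by interchanging the roles of $D_x$ and $D_y$ throughout.

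The main obstacle I expect is the infinite-dimensionality of $M\otimes S$: the orthogonal-complement identity $(\mathrm{im}\,D_x)^\perp=\ker D_x^\ast$ requires a closed-range property that is not automatic algebraically. I would handle this by restricting to generalized eigenspaces of $\Omega_{\mathbf{H}}\otimes 1$, which acts semisimply on $M\otimes S$ because the $\ast$-invariant form forces $\Omega_{\mathbf{H}}$ to be self-adjoint on $M$; moreover, by equation (4.24) of \cite{C1}, $D^2$ is expressible in terms of $\Omega_{\mathbf{H}}$ together with a $\widetilde{W}$-piece, so $D$, $D_x$, $D_y$ preserve each generalized eigenspace and the Hodge decomposition reduces to ordinary linear algebra on each (finite-dimensional) piece. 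Carrying out this reduction carefully is the technical heart of the argument.
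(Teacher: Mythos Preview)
Your approach is essentially the paper's: place the Hermitian form on $M\otimes S$, compute $D_x^{\ast}=-D_y$ (so your constant is $c=-1$), deduce $\ker D=\ker D^2=\ker D_x\cap\ker D_y$, and then read off the Hodge decomposition $M\otimes S=\ker D\oplus\mathrm{im}\,D_x\oplus\mathrm{im}\,D_y$ and $\ker D_x=\ker D\oplus\mathrm{im}\,D_x$ from orthogonality; your K\"ahler-type identity is in fact a slightly cleaner route to the kernel equality than the paper's Proposition~\ref{thm:hodgeprop}. Two small corrections: since $D^{\ast}=-D$ the displayed identity should read $\|Dv\|^2=-(v,D^2v)=\|D_xv\|^2+\|D_yv\|^2$; and $D_x,D_y$ do \emph{not} preserve the eigenspaces of $\Omega_{\mathbf H}\otimes 1$ (they shift them by $\pm 2$ via \eqref{id:grad}), so for the finite-dimensional reduction you should instead use the eigenspaces of $D^2$ (which commutes with $D_x$ and $D_y$) or the diagonal grading $U_r$ of Section~6---the paper itself simply asserts $M\otimes S=\ker D^2\oplus\mathrm{im}\,D^2$ without further comment in the unitary section.
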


In order to prove Theorem \ref{thm:unitary}, we  need a Hermitian form on $M \otimes S$. In fact,
we can endow $S$ with a positive definite Hermitian form $(\cdot,\cdot)_S$ by the following: Let $\{ y_I := y_{i_1} \wedge \dots \wedge y_{i_k}\ |\  I = \{i_1, \dots, i_k\} \subset \{1,\dots,n \} \}$ be a basis of $\wedge^{\bullet} \mf{h} \cong S$. Then the Hermitian form on $S$ is defined by $(y_I, y_J) := \delta_{I,J}$. One can verify that $(x_i \cdot s_1, s_2) = -(s_1, y_i \cdot s_2)$ for all $s_1, s_2 \in S$ and all $i = 1, \dots, n$.

Now one can define Hermitian form $(\cdot,\cdot)_{M \otimes S}$ on $M \otimes S$ by:
$$(m_1 \otimes s_1 ,m_2 \otimes s_2)_{M \otimes S} = (m_1,m_2)_M(s_1,s_2)_S.$$

\begin{lemma} \label{thm:hodgelemma}
The adjoint of the half-Dirac operator $D_x$ is $D_x^* = - D_y$.
\end{lemma}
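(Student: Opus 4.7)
The plan is a direct computation moving each of the two operators across the Hermitian form, using the $\ast$-structure on $M$ and the Clifford adjoint property on $S$. Unpacking the definition, for $m_i \in M$ and $s_i \in S$ we have
\begin{align*}
(D_x(m_1 \otimes s_1), m_2 \otimes s_2)_{M\otimes S}
&= \sum_i (x_i\cdot m_1, m_2)_M \,(y_i\cdot s_1, s_2)_S.
\end{align*}
On the $M$-factor I would use $x_i^* = y_i$ (this is the hypothesis on the chosen basis) to push $x_i$ across; on the $S$-factor I would use the stated Clifford adjoint relation $(x_i\cdot s_1, s_2)_S = -(s_1, y_i\cdot s_2)_S$. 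The only subtle point is that I need the opposite relation $(y_i\cdot s_1, s_2)_S = -(s_1, x_i\cdot s_2)_S$; this is obtained by applying the given relation to the pair $(s_2, s_1)$ in the reverse order and then conjugating, using Hermitian symmetry $(u,v)_S = \overline{(v,u)_S}$.

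Once both factors have been adjusted, the sum reassembles as
\begin{equation*}
-\sum_i (m_1, y_i\cdot m_2)_M\,(s_1, x_i\cdot s_2)_S
= -(m_1\otimes s_1, D_y(m_2\otimes s_2))_{M\otimes S},
\end{equation*}
which is precisely the statement $D_x^* = -D_y$. Since the $x_i, y_i$ all commute through the tensor signs (the spinor action is in the second factor, the Cherednik action in the first), no Koszul signs appear, and the basis-independence established in Theorem \ref{thm:dsquare}(a) guarantees that the identity does not depend on the particular basis used to define the $\ast$-structure.

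There is no serious obstacle; the only step that needs a moment of care is reversing the Clifford adjoint relation from $(x_i\cdot s_1,s_2)_S=-(s_1,y_i\cdot s_2)_S$ to $(y_i\cdot s_1,s_2)_S=-(s_1,x_i\cdot s_2)_S$, where one must keep track of conjugate linearity in the second argument of the Hermitian form. After that the result is a formal manipulation, and it sets up the Hodge-type decomposition needed for Theorem \ref{thm:unitary}: combined with $D = D_x + D_y$, the lemma yields $D^* = -D$, so $\ker D = \ker D^2$ and $M\otimes S$ splits orthogonally as $\ker D \oplus \operatorname{im} D_x \oplus \operatorname{im} D_y$, from which the identification $H_D(M) \cong \ker D_x/\operatorname{im} D_x \cong H^\bullet(\mathfrak h^*, M)\otimes \chi$ will follow.
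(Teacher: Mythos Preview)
Your proof is correct and follows exactly the same direct computation as the paper's own argument: expand $D_x$, use $x_i^* = y_i$ on the $M$-factor and the Clifford adjoint relation on the $S$-factor, and reassemble as $-D_y$. You are in fact slightly more careful than the paper in noting that the needed relation $(y_i\cdot s_1,s_2)_S = -(s_1,x_i\cdot s_2)_S$ must be obtained from the stated one $(x_i\cdot s_1,s_2)_S = -(s_1,y_i\cdot s_2)_S$ via Hermitian symmetry; the paper uses this reversal without comment.
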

\begin{proof}
\begin{align*}
(D_x(m_1 \otimes s_1) ,m_2 \otimes s_2)_{M \otimes S} &= \sum_i (x_i \cdot m_1 \otimes y_i \cdot s_1 ,m_2 \otimes s_2)_{M \otimes S}\\
&= \sum_i (x_i \cdot m_1, m_2)_M (y_i \cdot s_1 , s_2)_S\\
&= - \sum_i (m_1, y_i \cdot m_2)_M (s_1 , x_i \cdot s_2)_S\\
&= (m_1 \otimes s_1 , -\sum_i (y_i \otimes x_i) \cdot (m_2 \otimes s_2))_{M \otimes S}.
\end{align*}
Thus, $D_x^*  = -D_y$.
\end{proof}
Similarly, $D_y^* = -D_x$ and we have
$$D^* = (D_x + D_y)^* = -D_y - D_x = -D.$$
\begin{proposition} \label{thm:hodgeprop}
Let $M$ be an irreducible ${\bf H}$-module satisfying the hypothesis given in Theorem \ref{thm:unitary}. Then we have
$$\ker D = \ker D^2 = \ker D_x \cap \ker D_y.$$
\end{proposition}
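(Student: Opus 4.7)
The plan is to leverage the positive definiteness of the Hermitian form $(\cdot,\cdot)_{M\otimes S}$ together with the adjoint relations $D^*=-D$, $D_x^*=-D_y$, and $D_y^*=-D_x$ recorded in and immediately after Lemma \ref{thm:hodgelemma}. These identities convert statements about the various kernels into statements about norms of vectors, which then vanish under positive definiteness.

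The equality $\ker D = \ker D^2$ is the easier half. The inclusion $\subseteq$ is automatic, and for the reverse I would take $v \in \ker D^2$ and compute
\[
\|Dv\|^2 = (Dv, Dv) = (v, D^*Dv) = -(v, D^2 v) = 0,
\]
forcing $Dv = 0$.

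For $\ker D = \ker D_x \cap \ker D_y$, the inclusion $\supseteq$ is immediate from $D = D_x + D_y$. For the other direction, I would use $D_x^2 = D_y^2 = 0$ (Theorem \ref{thm:dsquare}(c)) to rewrite $D^2 = D_x D_y + D_y D_x$ and then, for $v \in \ker D$, compute
\[
0 = (v, D^2 v) = (v, D_x D_y v) + (v, D_y D_x v) = -\|D_y v\|^2 - \|D_x v\|^2,
\]
where the final equality applies $D_x^* = -D_y$ and $D_y^* = -D_x$. Positive definiteness of $(\cdot,\cdot)_{M\otimes S}$ then yields $D_x v = D_y v = 0$.

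Neither step presents a genuine obstacle once Lemma \ref{thm:hodgelemma} is in hand; the only real care needed is bookkeeping of signs when passing operators across the form. I would note in passing that the argument uses only positive definiteness of the $\ast$-invariant form on $M\otimes S$, so the irreducibility of $M$ stated in the proposition's hypothesis is not actually required.
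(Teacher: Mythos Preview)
Your proof is correct and rests on the same ingredients as the paper's---the adjoint identities $D^*=-D$, $D_x^*=-D_y$, $D_y^*=-D_x$ together with positive definiteness of the form on $M\otimes S$. The argument for $\ker D=\ker D^2$ is identical to the paper's. For the inclusion $\ker D\subset\ker D_x\cap\ker D_y$, your route is actually more direct: you compute $0=(v,D^2v)=-\|D_xv\|^2-\|D_yv\|^2$ in one stroke, whereas the paper takes a longer detour, first deducing $D_xD_yv=-D_yD_xv$, then applying $D_x$ to obtain $D_xD_yD_xv=0$, then using the form to kill $D_yD_xv$, and only then concluding $D_xv=0$. Both reach the same conclusion, but yours avoids the intermediate $D_xD_yD_x$ step entirely. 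Your closing remark that irreducibility of $M$ plays no role is also accurate; the paper does not invoke it either, and indeed the subsequent Hodge decomposition (Theorem \ref{thm:hodgethm}) is stated only under the $\ast$-unitarity hypothesis.
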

\begin{proof} It is easy to see $\ker D \subset \ker D^2$. Suppose $\widetilde{m} \in \ker D^2$. Then
$$0 = (D^2 \widetilde{m}, \widetilde{m}) = (D\widetilde{m}, D^*\widetilde{m}) = -(D\widetilde{m}, D\widetilde{m}).$$
It follows that $D\widetilde{m} = 0$ and hence $\widetilde{m} \in \ker D$. Also, since $\widetilde{m} \in \ker D = \ker D^2$,
$$0 = D^2\widetilde{m} = D_xD_y\widetilde{m} + D_yD_x \widetilde{m}$$
and $D_xD_y \widetilde{m} = - D_yD_x \widetilde{m}$. Applying $D_x$ to both sides, we have $D_x D_y D_x \widetilde{m} = 0$ and
$$0 = (D_x D_y D_x \widetilde{m} , D_x\widetilde{m}) = -(D_y D_x \widetilde{m}, D_yD_x \widetilde{m}).$$
It follows that $D_yD_x\widetilde{m} = 0$. Similarly, $0 = (D_y D_x \widetilde{m} , \widetilde{m}) = -(D_x\widetilde{m}, D_x\widetilde{m})$ and hence $D_x\widetilde{m} = 0$, i.e. $\widetilde{m} \in \ker D_x$. A similar argument also gives $\widetilde{m} \in \ker D_y$. Therefore, we conclude that
 $\ker D^2 \subset \ker D_x \cap \ker D_y$. For the other inclusion, we note that $D_x\widetilde{m} = D_y \widetilde{m} = 0$ implies $D^2\widetilde{m} = D_x D_y \widetilde{m} +  D_yD_x \widetilde{m} = 0 + 0 = 0$.
\end{proof}

We now prove a Hodge decomposition theorem for $M \otimes S$. First of all, we notice the following two facts:\\
\noindent Fact (i) -- We have $\mathrm{im}\ D_x \cap \mathrm{im}\ D_y = \{0\}$. If $\widetilde{m} \in \mathrm{im} D_x \cap \mathrm{im} D_y$, i.e. $\widetilde{m} = D_x \widetilde{n} = D_y \widetilde{n}'$, then $D_x \widetilde{m} = D_x^2 \widetilde{n} = 0 = D_x D_y \widetilde{n}'$. By the same argument as in Proposition \ref{thm:hodgeprop}, $D_x D_y \widetilde{n}' = 0$ implies $D_y \widetilde{n}' = \widetilde{m} = 0$.\\
\noindent  Fact (ii) -- We have $\ker D_x \perp \mathrm{im}\ D_y$. If $\widetilde{m} \in \ker D_x$, then $0 = (D_x \widetilde{m}, \widetilde{n}) = -(\widetilde{m},D_y \widetilde{n})$. Similarly, we have $\ker D_y \perp \mathrm{im}\ D_x$.
\begin{theorem}[Hodge decomposition] \label{thm:hodgethm}
Let $M$ be a $\ast$-unitary ${\bf H}$-module.  Then we have the following:\\
(a)\ $M \otimes S = \ker D \oplus \mathrm{im}\ D_x \oplus \mathrm{im}\ D_y$.\\
(b)\ $\ker D_x = \ker D \oplus \mathrm{im}\ D_x$, $\ker D_y = \ker D \oplus \mathrm{im}\ D_y$.
\end{theorem}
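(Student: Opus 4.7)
My plan is to prove (a) first; part (b) then follows with a short direct argument. The key inputs I will use are Proposition~\ref{thm:hodgeprop} identifying $\ker D=\ker D^2=\ker D_x\cap\ker D_y$, the two Facts (i) $\mathrm{im}\,D_x\cap\mathrm{im}\,D_y=\{0\}$ and (ii) $\ker D_x\perp\mathrm{im}\,D_y$ and $\ker D_y\perp\mathrm{im}\,D_x$, the positive definiteness of the Hermitian form on $M\otimes S$, and the semisimplicity of $D^2$ on $M\otimes S$ (which is inherited from the semisimple action of $\Omega_{{\bf H}}$ on a $*$-unitary module; see the Remark after Theorem~\ref{thm:incprop}).

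For (a), I would first verify pairwise trivial intersections. Fact (i) handles $\mathrm{im}\,D_x\cap\mathrm{im}\,D_y=\{0\}$. For $\ker D\cap\mathrm{im}\,D_x$: any $v$ in this intersection lies in $\ker D\subset\ker D_y$, so by Fact (ii) it is orthogonal to itself, forcing $v=0$; the case $\ker D\cap\mathrm{im}\,D_y=\{0\}$ is symmetric. The inclusion $\ker D\subset\ker D_x\cap\ker D_y$ combined with Fact (ii) upgrades these to $\ker D\perp\mathrm{im}\,D_x$ and $\ker D\perp\mathrm{im}\,D_y$. Next I would show the three pieces span $M\otimes S$: since $D^2$ acts semisimply, $M\otimes S=\ker D^2\oplus\mathrm{im}\,D^2$, and Proposition~\ref{thm:hodgeprop} identifies $\ker D^2$ with $\ker D$, while the identity $D^2=D_xD_y+D_yD_x$ places $\mathrm{im}\,D^2$ inside $\mathrm{im}\,D_x+\mathrm{im}\,D_y$. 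Directness of the three-fold sum then follows from a Hermitian product argument: from $a+b+c=0$, taking the product with $a$ gives $\|a\|^2=-(b,a)-(c,a)=0$ by the two orthogonalities just noted, so $a=0$; then $b=-c\in\mathrm{im}\,D_x\cap\mathrm{im}\,D_y=\{0\}$.

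For (b), given $\widetilde m\in\ker D_x$, I would decompose $\widetilde m=a+b+c$ via (a). Since $a\in\ker D\subset\ker D_x$ and $b\in\mathrm{im}\,D_x\subset\ker D_x$ (using $D_x^2=0$), applying $D_x$ yields $D_xc=0$, so $c\in\ker D_x\cap\mathrm{im}\,D_y$, which is $\{0\}$ by Fact (ii) and positive definiteness. Hence $\widetilde m=a+b\in\ker D+\mathrm{im}\,D_x$, and directness of this sum is the $\ker D\cap\mathrm{im}\,D_x=\{0\}$ assertion from part (a). Swapping the roles of $D_x$ and $D_y$ gives $\ker D_y=\ker D\oplus\mathrm{im}\,D_y$.

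The only step I expect to require genuine care is the algebraic decomposition $M\otimes S=\ker D^2\oplus\mathrm{im}\,D^2$, since this cannot be extracted from the positive definite form alone and really does need semisimplicity of $D^2$. The explicit formula for $D^2$ in \cite{C1}, together with the semisimple action of $\Omega_{{\bf H}}$ on any $*$-unitary module, supplies exactly this. All remaining steps are bookkeeping with Facts (i), (ii), and Proposition~\ref{thm:hodgeprop}.
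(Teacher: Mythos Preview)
Your proposal is correct and follows essentially the same route as the paper: both arguments hinge on Proposition~\ref{thm:hodgeprop}, Facts (i)--(ii), and the semisimple decomposition $M\otimes S=\ker D^2\oplus\mathrm{im}\,D^2$. The only cosmetic differences are that the paper proves (a) by establishing the equality $\mathrm{im}\,D^2=\mathrm{im}\,D_x\oplus\mathrm{im}\,D_y$ (showing both inclusions), whereas you argue spanning plus directness, and for (b) the paper phrases the same computation via $(\mathrm{im}\,D_y)^{\perp}=\ker D\oplus\mathrm{im}\,D_x$ rather than decomposing an element of $\ker D_x$ directly.
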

\begin{proof}
(a) By Proposition \ref{thm:hodgeprop}, $\ker D = \ker D^2$ and hence we only need to prove $\mathrm{im}\ D^2 = \mathrm{im}\ D_x \oplus \mathrm{im}\ D_y$ (Recall $\mathrm{im}\ D_x \cap \mathrm{im}\ D_y = \{0\}$ by Fact (i)). \\
One inclusion is simple -- suppose $\widetilde{m} = D_xD_y \widetilde{n} + D_yD_x \widetilde{n} \in \mathrm{im}\ D^2$, then it is automatically in $\mathrm{im}\ D_x \oplus \mathrm{im}\ D_y$.\\
Now suppose $\widetilde{m} \in \mathrm{im}\ D_x$, then $\widetilde{m} \perp \ker D_y$. But $\ker D^2 \subset \ker D_y$ by Proposition \ref{thm:hodgeprop}, hence $\widetilde{m} \in (\ker D^2)^{\perp}$. Since $M \otimes S = \ker D^2 \oplus \mathrm{im}\ D^2$, and our Hermitian product is positive definite, $(\ker D^2)^{\perp} = \mathrm{im}\ D^2$ and consequently $\widetilde{m} \in \mathrm{im}\ D^2$. Similarly, one can prove $\mathrm{im}\ D_y \subset \mathrm{im}\ D^2$ and therefore (a) is proved.\\

\noindent (b) By Fact (ii) above, $\ker D_x \subset (\mathrm{im}\ D_y)^{\perp}$. Now (a) says $(\mathrm{im}\ D_y)^{\perp} = \ker D \oplus \mathrm{im}\ D_x$. So we have the inclusion $\ker D_x \subset \ker D \oplus \mathrm{im}\ D_x$.\\
For the other inclusion, note that by Proposition \ref{thm:hodgeprop} we have $\ker D \subset \ker D_x$, and $\mathrm{im}\ D_x \subset \ker D_x$ since $D_x^2 = 0$ so the inclusion must be an equality.\\
The second part of the statement is analogous to the first part, and we omit the proof of it.
\end{proof}

\noindent \textit{Proof of Theorem \ref{thm:unitary}.} Since $\ker D = \ker D^2$, $\ker D \cap \mathrm{im}\ D = \{0\}$ and hence $H_D(M) = \ker D$. By the first equality of Theorem \ref{thm:hodgethm}(b),
$$H_D(M) = \ker D \cong \ker D_x/\mathrm{im}\ D_x \cong H^{\bullet}(\mf{h}^*, M) \otimes \chi.$$
Similarly,
$$H_D(M) = \ker D \cong \ker D_y/\mathrm{im}\ D_y \cong H_{\bullet}(\mf{h}, M) \otimes \chi$$
by the second equality of Theorem \ref{thm:hodgethm}(b). \qed

%%%%%%%%%%%%%%%%%  6  %%%%%%%%%%%%%%%%
\section{Lie algebra cohomology for ${\bf H}_{t,c}$ with $t=1$}
%%%%%%%%%%%%%%%%%%%%%%%%%%%%%%%%%%%
We note that the mapping $x\mapsto \lambda x, y\mapsto \lambda y, w\mapsto w$ for $x\in\mf{h}^*$, $y\in \mf{h}$
and $w\in W$ induces an algebra isomorphism
$$ {\bf H}_{t,c}\cong {\bf H}_{\lambda^2 t,\lambda^2 c}.$$
Therefore,  we need to consider only two cases $t=1$ and $t=0$ up to equivalence.

In this section, we assume $t=1$ and set ${\bf H} := {\bf H}_{1,c}$. Recall from \cite{GGOR}, the category $\mc{O}$ for ${\bf H} := {\bf H}_{1,c}$ is defined. For any irreducible $W$-module $\sigma$, the standard module $M(\sigma)$ is defined to be
$$M(\sigma) = {\bf H} \otimes_{S(\mf{h}) \rtimes \bb{C}[W]} \sigma.$$
As $S(\mf{h}^*) \rtimes \bb{C}[W]$-module, it is isomorphic to $S(\mf{h}^*) \otimes \sigma$, and it has a unique irreducible quotient $L(\sigma)$. For most values of $c$, the standard module is irreducible, i.e. $M(\sigma) = L(\sigma)$.\\

The standard module $M(\sigma) \cong S(\mf{h}^*) \otimes \sigma$ has an $\Omega_{{\bf H}}$-eigenspace decomposition. More precisely, $\Omega_{{\bf H}}$ acts semisimply on $M(\sigma)$
with lowest weight vectors being of the form $1 \otimes v_{\sigma} \in M(\sigma)$ and $\Omega_{{\bf H}}$ acting by the scalar $a_0$. By Equation \eqref{id:grad}, if $m \in M$ is an eigenvector of $\Omega_{{\bf H}}$ of eigenvalue $r$, then $x \cdot m$ is also an eigenvector of eigenvalue $r+2$ for any $x \in \mf{h}^*$. Therefore, by letting $a_k = a_0 + 2k$, we have an $\Omega_{{\bf H}}$-eigenspace decomposition
$$M(\sigma) = \bigoplus M_{a_i},\ \ \ \text{with }\ M_{a_i} = S^k(\mf{h}^*) \otimes \sigma.$$
since every submodule of $J \subset M(\sigma)$ is graded, i.e. $J = \bigoplus J_{a_i}$ with $J_{a_i} \subset M_{a_i}$, we conclude that $\Omega_{{\bf H}}$ acts semisimply on every subquotient $L$ of $M(\sigma)$ and Theorem \ref{thm:incprop} applies.\\

\begin{proposition}\label{thm:incthm}
Suppose $M \in \mc{O}$ has a {\bf BGG resolution}, i.e.,
there is a exact sequence of ${\bf H}$-modules
\begin{align}\label{eq:BGG}
0 \to \bigoplus_{j=1}^{j_n} M(\sigma_{n,j}) \to \bigoplus_{j=1}^{j_{n-1}} M(\sigma_{n-1,j}) \to \dots \to \bigoplus_{j=1}^{j_0} M(\sigma_{0,j}) \to M \to 0.
\end{align}
Then we have inclusions of $W$-modules for all $i \geq 0$:
$$H_{i}(\mf{h}^*,M) \leq \bigoplus_{j = 1}^{j_i} \sigma_{i,j}.$$
Moreover, if $\sigma_{i,j} \ncong \sigma_{i+1,j'}$ for all $i,j,j'$, then the above inclusions are isomorphisms.
\end{proposition}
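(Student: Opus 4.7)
The plan is to combine the BGG resolution~\eqref{eq:BGG} with the Koszul-type chain complex computing $H_{\bullet}(\mf{h}^*,-)$ in a hyperhomology/double-complex argument. Write $P_p=\bigoplus_{j=1}^{j_p}M(\sigma_{p,j})$ for the $p$-th term of the resolution, and consider the bicomplex $P_{\bullet}\otimes\wedge^{\bullet}\mf{h}^*$. The horizontal differential comes from~\eqref{eq:BGG}, while the vertical one is the $\mf{h}^*$-analogue of $\partial$ in~\eqref{def:partial}, namely
$$\partial'_p(m\otimes x_{i_1}\wedge\cdots\wedge x_{i_p}) = \sum_k(-1)^k\, x_{i_k}\cdot m\otimes x_{i_1}\wedge\cdots\wedge\widehat{x_{i_k}}\wedge\cdots\wedge x_{i_p}.$$
Both differentials are $W$-equivariant, and they commute because every map in the BGG resolution is an ${\bf H}$-module, and in particular an $\mf{h}^*$-module, homomorphism.

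First I would compute $H_{\bullet}(\mf{h}^*,M(\sigma))$ for a single standard module. Since $M(\sigma)\cong S(\mf{h}^*)\otimes\sigma$ as an $S(\mf{h}^*)$-module with $\mf{h}^*$ acting by multiplication on the first tensor factor, the complex $M(\sigma)\otimes\wedge^{\bullet}\mf{h}^*$ is the classical Koszul resolution of $\bb{C}$ over $S(\mf{h}^*)$, tensored with $\sigma$. Hence as $W$-modules
$$H_0(\mf{h}^*,M(\sigma))\cong\sigma,\qquad H_i(\mf{h}^*,M(\sigma))=0\quad\text{for }i>0,$$
and the same holds for each $P_p$ upon summing over $j$.

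Now I would run the two standard spectral sequences of the bicomplex, both converging to the hyperhomology of $P_{\bullet}\otimes\wedge^{\bullet}\mf{h}^*$. Taking horizontal homology first, the exactness of~\eqref{eq:BGG} in positive degrees collapses everything onto the column $p=0$, whose vertical homology is $H_{\bullet}(\mf{h}^*,M)$. Taking vertical homology first, the vanishing above collapses the $E^1$-page onto the row $q=0$, with $E^1_{p,0}=\bigoplus_{j=1}^{j_p}\sigma_{p,j}$ and a horizontal differential $d^1$ induced functorially from~\eqref{eq:BGG}. Comparing the two limits identifies $H_i(\mf{h}^*,M)$ with the $i$-th homology of
$$\cdots\to\bigoplus_{j=1}^{j_{i+1}}\sigma_{i+1,j}\xrightarrow{d^1}\bigoplus_{j=1}^{j_i}\sigma_{i,j}\xrightarrow{d^1}\bigoplus_{j=1}^{j_{i-1}}\sigma_{i-1,j}\to\cdots,$$
which is a subquotient, hence a sub-$W$-module, of $\bigoplus_j\sigma_{i,j}$; this proves the first assertion.

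For the \emph{moreover} clause, under the hypothesis that no $\sigma_{i,j}$ is isomorphic to any $\sigma_{i+1,j'}$, the consecutive terms of the complex above share no common irreducible $W$-summand, so Schur's lemma for $\bb{C}[W]$-modules forces each $d^1$ to vanish; hence $H_i(\mf{h}^*,M)\cong\bigoplus_j\sigma_{i,j}$. The main technical point I anticipate is ensuring that the two differentials of the bicomplex genuinely commute as $W$-equivariant maps, but this is automatic since the BGG maps are ${\bf H}$-linear and $\partial'$ only involves the $\mf{h}^*$-action on $M$, so the bookkeeping reduces to functoriality of $H_0(\mf{h}^*,-)$ on ${\bf H}$-module maps.
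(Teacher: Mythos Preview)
Your proposal is correct and follows essentially the same approach as the paper: both use that each $M(\sigma)\cong S(\mf{h}^*)\otimes\sigma$ is free over $R=S(\mf{h}^*)$, so that the BGG resolution is a free $R$-resolution and $H_i(\mf{h}^*,M)\cong\mathrm{Tor}_i^R(\bb{C},M)$ is the $i$-th homology of the induced complex $\cdots\to\bigoplus_j\sigma_{i+1,j}\to\bigoplus_j\sigma_{i,j}\to\cdots$. The only difference is packaging: the paper invokes $\mathrm{Tor}$ directly and appeals to ``usual homological algebra'' for the identification, whereas you unpack that identification via the two spectral sequences of the bicomplex $P_\bullet\otimes\wedge^\bullet\mf{h}^*$; the \emph{moreover} clause is handled identically in both, by observing that the differentials of the resulting $W$-complex must vanish when adjacent terms share no irreducible constituent.
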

\begin{proof}
We restrict our attention to the $R = S(\mf{h}^*)$-module structure of a ${\bf H}$-module $M$. By \eqref{eq:BGG}, there is a free $R$-resolution of $M$ by:
$$0 \to R \otimes (\bigoplus_{j=1}^{j_n} \sigma_{n,j}) \to R \otimes (\bigoplus_{j=1}^{j_{n-1}} \sigma_{n-1,j}) \to \dots \to R \otimes (\bigoplus_{j=1}^{j_0} \sigma_{0,j}) \to M \to 0.$$
Therefore, $Tor_{\bullet}^R(\bb{C}, M)$ can be computed by tensoring $\bb{C} \otimes_{R} \bullet$ to the above sequence, which gives a complex of $W$-modules
\begin{align}\label{wcomplex}
0 \to \bigoplus_{j=1}^{j_n} \sigma_{n,j} \to \bigoplus_{j=1}^{j_{n-1}} \sigma_{n-1,j} \to \dots \to \bigoplus_{j=1}^{j_0} \sigma_{0,j} \to 0.
\end{align}
Hence, $Tor_i^R(\bb{C},M) \leq \bigoplus_{j=1}^{j_i} \sigma_{i,j}$ and by usual homological algebra argument, we have $Tor_i^R(\bb{C},M) \cong H_i(\mf{h}^*,M)$ and the first part of the proposition is proved.\\
Assume now that $\sigma_{i,j} \neq \sigma_{i+1,j'}$ for all $i,j,j'$. Then the differentials in the complex \eqref{wcomplex} must be all zeros. So in this case
$$H_i(\mf{h}^*,M) \cong Tor_i^R(\bb{C},M) \cong \bigoplus_{j=1}^{j_i} \sigma_{i,j}.$$
Therefore, the proposition is proved.
\end{proof}

It follows from the Poincar\'{e} duality (Proposition \ref{poincare}) for $\mf{c} = \mf{h}^*$ that
\begin{align} \label{eq:poincoho}
H^{\bullet}(\mf{h}^*,M) \otimes \chi \cong H_{\bullet}(\mf{h}^*,M) \otimes \chi^{-1} \leq \bigoplus_{i = 0}^n \bigoplus_{j = 1}^{j_i} \sigma_{i,j} \otimes \chi^{-1},
\end{align}
and the above inclusion is an isomorphism if $\sigma_{i,j} \ncong \sigma_{i+1,j'}$ for all $i,j,j'$.
%Furthermore,  if $\sigma_{2i,j} \neq \sigma_{2i'+1,j'}$ for all $i,i',j,j'$, then
%the above inclusions are isomorphisms.

The following corollary gives precisely the Dirac cohomology of a standard module $M(\sigma)$. This refines Proposition 5.6 of \cite{C1}:
\begin{corollary}\label{thm:std}
The standard module $M(\sigma)$ has Dirac cohomology
$$H_D(M(\sigma)) \cong \sigma \otimes \chi^{-1}.$$
\end{corollary}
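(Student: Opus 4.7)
The plan is to combine the upper bound coming from Theorem~\ref{thm:incprop} with a direct harmonic construction producing a matching lower bound. First I would compute $H^{\bullet}(\mf{h}^*, M(\sigma))$ via Proposition~\ref{thm:incthm}: since $M(\sigma) \cong S(\mf{h}^*) \otimes \sigma$ is already free over $R = S(\mf{h}^*)$, one can use the trivial BGG resolution of length zero with $j_0 = 1$, $\sigma_{0,1} = \sigma$, and no higher terms. This yields $H_i(\mf{h}^*, M(\sigma)) \cong \sigma$ when $i = 0$ and zero otherwise. Poincar\'e duality (Proposition~\ref{poincare}) then places the cohomology in top degree, $H^n(\mf{h}^*, M(\sigma)) \cong \sigma \otimes \wedge^n \mf{h}$. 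Using Proposition~\ref{prop:hstar} to identify $\wedge^n \mf{h} \cong \chi^{-2}$ as a $\widetilde{W}$-module, tensoring with $\chi$ produces $H^{\bullet}(\mf{h}^*, M(\sigma)) \otimes \chi \cong \sigma \otimes \chi^{-1}$. Since $\Omega_{\bf H}$ acts semisimply on $M(\sigma)$ (as noted in the discussion preceding Proposition~\ref{thm:incthm}), the remark following Theorem~\ref{thm:incprop} ensures its hypothesis is satisfied and we obtain an injection $H_D(M(\sigma)) \hookrightarrow \sigma \otimes \chi^{-1}$.

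To reverse the injection, I would exhibit an explicit copy of $\sigma \otimes \chi^{-1}$ inside $H_D(M(\sigma))$. Let $\omega = y_1 \wedge \cdots \wedge y_n \in \wedge^n \mf{h} \subset S$ span the top exterior power. For any $v \in \sigma$ the element $1 \otimes v \otimes \omega$ lies in $\ker D$: indeed $D_x(1 \otimes v \otimes \omega) = \sum_i x_i \otimes v \otimes (y_i \wedge \omega) = 0$ because $y_i \wedge \omega = 0$, and $D_y(1 \otimes v \otimes \omega) = \sum_i y_i \cdot (1 \otimes v) \otimes (x_i \cdot \omega) = 0$ because $\mf{h}$ annihilates $1 \otimes v$ (the generators of $\sigma$ are killed by $S(\mf{h})^+$ in the induced module $M(\sigma)$). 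Under Proposition~\ref{prop:hstar} the subspace $1 \otimes \sigma \otimes \omega$ is precisely $\sigma \otimes \chi^{-1}$ as a $\widetilde{W}$-module.

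The remaining verification is that $(1 \otimes \sigma \otimes \omega) \cap \mathrm{im}\ D = 0$. Because $\wedge^{n+1}\mf{h} = 0$, any contribution to $\mathrm{im}\ D \cap (M(\sigma) \otimes \omega)$ must arise from $D_x$ applied to $M(\sigma) \otimes \wedge^{n-1}\mf{h}$. A short computation shows $D_x(m \otimes y_1 \wedge \cdots \wedge \widehat{y_k} \wedge \cdots \wedge y_n) = \pm x_k m \otimes \omega$, so $\mathrm{im}\ D \cap (M(\sigma) \otimes \omega) = (\mf{h}^* \cdot M(\sigma)) \otimes \omega = S^{\geq 1}(\mf{h}^*) \otimes \sigma \otimes \omega$, which manifestly avoids $1 \otimes \sigma \otimes \omega$. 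Hence the latter injects into $H_D(M(\sigma))$, and the two mutual injections between finite-dimensional $\widetilde{W}$-modules force the desired isomorphism $H_D(M(\sigma)) \cong \sigma \otimes \chi^{-1}$.

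The main obstacle I anticipate is keeping the various $\chi$-twists straight: both the Poincar\'e duality step and the identification of the $\widetilde{W}$-structure on $\wedge^n \mf{h}$ involve the determinant character, so a careless power of $\chi$ could shift the final answer by $\chi^{\pm 2}$. A consistent application of Proposition~\ref{prop:hstar} at each stage is what pins down the correct twist.
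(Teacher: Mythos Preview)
Your argument is correct and follows the paper's approach: both obtain the upper bound $H_D(M(\sigma)) \hookrightarrow \sigma \otimes \chi^{-1}$ from Proposition~\ref{thm:incthm} combined with Theorem~\ref{thm:incprop} and Poincar\'e duality, and then match it with a lower bound. The only difference is that the paper invokes Proposition~5.6 of \cite{C1} for the inclusion $\sigma \otimes \chi^{-1} \subset H_D(M(\sigma))$, whereas you supply the explicit harmonic vectors $1 \otimes v \otimes \omega$ and verify directly that they lie in $\ker D \setminus \mathrm{im}\,D$; this is precisely the content of that cited proposition, so your version is simply more self-contained.
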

\begin{proof}
The BGG resolution of $M = M(\sigma)$ is obviously
$$0 \to M(\sigma) \to M \to 0.$$
By Proposition \ref{thm:incthm}, we have
$$H_i(\mf{h}^*, M(\sigma)) = 0\ \ \ \text{for }\ i > 0,\ \ \ \ \ H_0(\mf{h}^*, M(\sigma)) = \sigma.$$
Since $M(\sigma)$ is $\Omega_{\mathbf{H}}$-semisimple, we can apply Theorem \ref{thm:incprop} and Equation (\ref{eq:poincoho}) to get $H_D(M(\sigma))$ $\leq$ $H_{\bullet}(\mf{h}^*, M(\sigma)) \otimes \chi^{-1}$ $=$ $\sigma \otimes \chi^{-1}$. On the other hand, Proposition 5.6 of \cite{C1} says $\sigma \otimes \chi^{-1} \in H_D(M(\sigma))$. Therefore the result follows.
\end{proof}

As another application of Proposition \ref{thm:incthm}, we consider the finite-dimensional ${\bf H}$-modules $L(\mathrm{triv})$ studied in \cite{BEG}. In fact, Proposition 1.6 of \cite{BEG} gives the BGG resolution of $L(\mathrm{triv})$ by
$$0 \to M(\wedge^n \mf{h}^*) \to M(\wedge^{n-1} \mf{h}^*) \to \dots \to M(\wedge^1 \mf{h}^*) \to M(\mathrm{triv}) \to L(\mathrm{triv}) \to 0.$$
For any Weyl group $W$, all $\wedge^i \mf{h}^* \cong \wedge^i \mf{h}$ are distinct irreducible modules, so Proposition \ref{thm:incthm} applies and
$$H_{i}(\mf{h}^*,L(\mathrm{triv})) \cong \wedge^{i}\mf{h}.$$
One would like to relate the above result with Dirac cohomology $H_D(L(\mathrm{triv}))$. Indeed, by Theorem \ref{thm:incprop} and Equation (\ref{eq:poincoho}), we have
$$H_D(L(\mathrm{triv})) \leq H^{\bullet}(\mf{h}^*,L(\mathrm{triv})) \otimes \chi = H_{\bullet}(\mf{h}^*,L(\mathrm{triv})) \otimes \chi^{-1} = \wedge^{\bullet}\mf{h} \otimes \chi^{-1}.$$
To see the inclusion is an isomorphism, we need the following:

\begin{theorem}\label{thm:parity}
Let $M$ be an $\Omega_{{\bf H}}$-semisimple ${\bf H}$-module with an eigenspace decomposition $M = \bigoplus_{\lambda} M_{\lambda}$. If $M$ satisfies the {\bf parity condition}, i.e.
$$[H^{even}(\mf{h}^*,M) : H^{odd}(\mf{h}^*,M)]_W = 0,$$
then $H_D(M) \cong H^{\bullet}(\mf{h}^*,M) \otimes \chi$. The analogous statement holds also for $H_{\bullet}(\mf{h},M)$.
\end{theorem}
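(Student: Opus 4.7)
The plan is to promote the injection of Theorem \ref{thm:incprop} to an equality by comparing $\widetilde{W}$-supercharacters with respect to the Clifford $\bb{Z}/2$-grading on $M\otimes S$, and then to invoke the parity hypothesis.

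I will use the $\bb{Z}/2$-grading $M\otimes S=(M\otimes S)^{+}\oplus(M\otimes S)^{-}$ coming from the identification $S\cong\wedge^{\bullet}\mf{h}$ by even/odd exterior degree. Every summand $x_i\otimes y_i$ of $D_x$ and $y_i\otimes x_i$ of $D_y$ has Clifford degree one, so $D_x$, $D_y$, and $D=D_x+D_y$ are all odd for this grading. Because the filtration of Lemma \ref{thm:filter} is by exterior degree and $\widetilde{W}$ preserves each $\wedge^p\mf{h}$ (Proposition \ref{prop:hstar}), the injection
\[
H_D(M)\hookrightarrow \ker D_x/\mathrm{im}\ D_x\cong H^{\bullet}(\mf{h}^*,M)\otimes\chi
\]
produced by Theorem \ref{thm:incprop} respects parities and decomposes into two injections $H_D(M)^{\pm}\hookrightarrow H^{even/odd}(\mf{h}^*,M)\otimes\chi$.

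Next I will establish the supercharacter identity
\[
[H_D(M)^{+}]-[H_D(M)^{-}]=[H^{even}(\mf{h}^*,M)\otimes\chi]-[H^{odd}(\mf{h}^*,M)\otimes\chi]
\]
in the Grothendieck group of $\widetilde{W}$-modules. Let $V_0=\ker D^2$ and $V_1=\mathrm{im}\ D^2$; both are $\widetilde{W}$-stable and stable under $D,D_x,D_y$, and $V_0$ is finite-dimensional as invoked in the proof of Theorem \ref{thm:incprop}. For any $\widetilde{W}$-equivariant odd differential $d$ with $d^2=0$ on a finite-dimensional $\bb{Z}/2$-graded $\widetilde{W}$-module $C$, complete reducibility of $\widetilde{W}$-modules gives a splitting $C=(\ker d/\mathrm{im}\ d)\oplus\mathrm{im}\ d\oplus C'$ with $d:C'\xrightarrow{\sim}\mathrm{im}\ d$, yielding the standard identity $[C^{+}]-[C^{-}]=[(\ker d/\mathrm{im}\ d)^{+}]-[(\ker d/\mathrm{im}\ d)^{-}]$. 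Applying this to $(V_0,D|_{V_0})$ and to $(V_0,D_x|_{V_0})$ produces the two equalities
\[
[V_0^{+}]-[V_0^{-}]=[H_D(M)^{+}]-[H_D(M)^{-}]=[H(D_x')^{+}]-[H(D_x')^{-}].
\]
The key step is then to show that $D_x$ is null-homotopic on $V_1$: since $D^2=D_xD_y+D_yD_x$ commutes with $D_x$ and $D_y$ and is invertible on $V_1$ by the $\Omega_{\bf H}$-semisimplicity hypothesis, the operator $h:=D_y(D^2|_{V_1})^{-1}$ satisfies $D_xh+hD_x=\mathrm{id}_{V_1}$, so $H(D_x'')=0$ and $H^{\bullet}(\mf{h}^*,M)\otimes\chi\cong H(D_x')\oplus H(D_x'')=H(D_x')$. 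Combining these three observations gives the claimed identity.

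Finally I will conclude by a Grothendieck-group squeeze. Let $Q^{\pm}$ be the cokernels of the two parity-preserving injections, so that $Q^{+}$ is a quotient of $H^{even}(\mf{h}^*,M)\otimes\chi$ and $Q^{-}$ a quotient of $H^{odd}(\mf{h}^*,M)\otimes\chi$. Rearranging the supercharacter identity gives $[Q^{+}]=[Q^{-}]$. Since tensoring by the genuine character $\chi$ is a bijection between irreducible $W$-modules and genuine irreducible $\widetilde{W}$-modules, the parity hypothesis forces $H^{even}(\mf{h}^*,M)\otimes\chi$ and $H^{odd}(\mf{h}^*,M)\otimes\chi$ to share no common $\widetilde{W}$-composition factor, so $Q^{+}=Q^{-}=0$. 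Thus the injection of Theorem \ref{thm:incprop} is an isomorphism, and the statement for $H_{\bullet}(\mf{h},M)$ follows by the same argument with $D_y$ in place of $D_x$. The main obstacle is the null-homotopy construction on $V_1$; everything else is formal bookkeeping with supercharacters and the parity hypothesis.
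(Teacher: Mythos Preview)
Your argument is correct and reaches the same conclusion as the paper, but the route is genuinely different. The paper worries that the $\widetilde{W}$-isotypic multiplicities in $M\otimes S$ may be infinite, so it introduces a bigrading by $\Omega_{\bf H}$-eigenvalue and exterior degree and cuts $M\otimes S$ into finite-dimensional diagonal slices $U_r$; on each slice it proves both the parity-preserving inclusion (Lemma~\ref{thm:urinc}(a), essentially Theorem~\ref{thm:incprop} restricted to $U_r$) and the Euler-characteristic identity (Lemma~\ref{thm:urinc}(b)), then sums over $r$ and applies the parity hypothesis. You bypass the $U_r$ machinery entirely by observing that on $V_1=\mathrm{im}\,D^2$ the operator $D_x$ is null-homotopic via $h=D_y(D^2|_{V_1})^{-1}$, so the entire $D_x$-cohomology already lives on the finite-dimensional $V_0=\ker D^2$; the supercharacter comparison can then be done once on $V_0$. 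Your approach is shorter and more conceptual, and it makes transparent why only $\ker D^2$ matters; the paper's approach, on the other hand, is closer to the argument in \cite{HX} and gives the graded refinement $H_D^{\pm}(U_r)\cong H^{even/odd}(U_r,D_x)$ for free. One small point worth making explicit in your write-up: the statement that the inclusion of Theorem~\ref{thm:incprop} ``decomposes into two injections'' on parities requires rerunning the multiplicity count in the proof of Theorem~\ref{thm:incprop} separately on $U^{+}$ and $U^{-}$ (using that $D$ and $D_x$ are odd, so $\ker$ and $\mathrm{im}$ split by parity); you hint at this via Lemma~\ref{thm:filter}, but the passage from $\ker D\hookrightarrow\ker D_x'$ to $H_D(M)^{\pm}\hookrightarrow H(D_x')^{\pm}$ is a multiplicity argument, not a map, and deserves one more sentence.
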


Applying Theorem \ref{thm:parity} to the finite-dimensional modules $L(\mathrm{triv})$, we get the following:
\begin{corollary}
Let $M = L(\mathrm{triv})$ be a finite-dimensional ${\bf H}$-module in \cite{BEG}, then
$$H_D(L(\mathrm{triv})) \cong \wedge^{\bullet}\mf{h} \otimes \chi^{-1}$$
\end{corollary}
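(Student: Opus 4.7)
The plan is to verify the parity condition of Theorem~\ref{thm:parity} for $M = L(\mathrm{triv})$ and then read off the isomorphism from Theorem~\ref{thm:parity} together with the already-computed $\mf{h}^*$-homology of $L(\mathrm{triv})$.

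First I would recall from the paragraph preceding the corollary that the BGG resolution of $L(\mathrm{triv})$ given in \cite{BEG} together with Proposition~\ref{thm:incthm} yields the $W$-module isomorphism $H_i(\mf{h}^*, L(\mathrm{triv})) \cong \wedge^i \mf{h}$, using the fact (valid for Weyl groups and stated just above the corollary) that the exterior powers $\wedge^i\mf{h}$ are pairwise non-isomorphic irreducible $W$-modules. Applying the Poincar\'e duality of Proposition~\ref{poincare} with $\mf{c} = \mf{h}^*$ (so that $\wedge^n \mf{c}^* = \wedge^n \mf{h}$) then gives
$$H^i(\mf{h}^*, L(\mathrm{triv})) \cong \wedge^{n-i}\mf{h} \otimes \wedge^n \mf{h}$$
as $W$-modules, for $i = 0, \dots, n$.

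The key observation is that these right-hand sides remain pairwise non-isomorphic as $i$ varies: tensoring with the one-dimensional $W$-module $\wedge^n \mf{h}$ is a bijection on isomorphism classes of irreducible $W$-modules, and the untwisted factors $\wedge^{n-i}\mf{h}$ are already pairwise non-isomorphic. Consequently no irreducible $W$-type can appear in both $H^{even}(\mf{h}^*, L(\mathrm{triv}))$ and $H^{odd}(\mf{h}^*, L(\mathrm{triv}))$, which is precisely the parity condition of Theorem~\ref{thm:parity}. Since $L(\mathrm{triv})$ is a subquotient of $M(\mathrm{triv})$ it inherits $\Omega_{\bf H}$-semisimplicity from the discussion at the start of Section~6, so the full hypothesis of Theorem~\ref{thm:parity} holds.

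Finally, applying Theorem~\ref{thm:parity} yields $H_D(L(\mathrm{triv})) \cong H^{\bullet}(\mf{h}^*, L(\mathrm{triv})) \otimes \chi$, and rewriting the right-hand side via Equation~\eqref{eq:poincoho} gives $H_{\bullet}(\mf{h}^*, L(\mathrm{triv})) \otimes \chi^{-1} \cong \wedge^{\bullet}\mf{h} \otimes \chi^{-1}$, which is the claimed isomorphism. I do not anticipate any substantive obstacle: the only non-routine ingredient is the parity check, and it reduces immediately to the pairwise non-isomorphism of the $\wedge^k\mf{h}$ that has already been invoked in the computation of $H_\bullet(\mf{h}^*, L(\mathrm{triv}))$.
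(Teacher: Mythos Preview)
Your proof is correct and follows essentially the same route as the paper's: verify the parity condition of Theorem~\ref{thm:parity} by checking that the cohomology groups $H^i(\mf{h}^*,L(\mathrm{triv}))$ are pairwise non-isomorphic irreducible $W$-modules, then apply the theorem and Equation~\eqref{eq:poincoho}. The only cosmetic difference is that the paper further simplifies $\wedge^{n-i}\mf{h}\otimes\wedge^n\mf{h}\cong\wedge^i\mf{h}$ (using self-duality of the reflection representation for Weyl groups) before invoking distinctness, whereas you argue distinctness directly from the fact that tensoring by a one-dimensional module is a bijection on irreducibles; both are valid and the logic is the same.
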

\begin{proof}
Since all $H^i(\mf{h}^*,M) = H_{n-i}(\mf{h}^*, M) \otimes {\det}_{\mf{h}} = \wedge^{n-i} \mf{h} \otimes \wedge^n \mf{h} \cong \wedge^i \mf{h}$ are distinct $W$-modules, the hypothesis of Theorem \ref{thm:parity} is satisfied. Hence the result follows.
\end{proof}

As before, we only prove Theorem \ref{thm:parity} for $H^{\bullet}(\mf{h}^*,M)$. The proof is analogous to that of \cite{HX}. However, the proof given there requires the fact that weight spaces of $M \otimes S$ are finite-dimensional, yet in our setting the multiplicities of irreducible $W$-modules can be infinite. So we have to decompose $M \otimes S$ into a direct sum of finite-dimensional $W$-modules, where $D_x, D_y$ and therefore $D$ will act on each summand individually.\\

Identify $S$ with $\wedge^{\bullet}\mf{h}$ as vector spaces. Under the hypothesis, we have the $\Omega_{{\bf H}}$-eigenspace decomposition of $M = \bigoplus_a M_a$. Define a bi-grading on $M \otimes S = \bigoplus_{a,j} (M \otimes S)_{a,j}$ by
\begin{align}\label{def:bigrade}
(M \otimes S)_{a,l} = M_a \otimes \wedge^l \mf{h}.
\end{align}
By \eqref{id:grad} and the definitions of $D_x$ and $D_y$, $D_x$ maps $(M \otimes S)_{a,l}$ to $(M \otimes S)_{a+2,l+1}$, and $D_y$ maps $(M \otimes S)_{a,l}$ to $(M \otimes S)_{a-2,l-1}$.

\begin{definition}
Given the bi-grading of $M \otimes S$ in \eqref{def:bigrade}, set
$$U_r := \bigoplus_{a - 2l = r} M_{a,l},\ \ \ U_r^+ := \bigoplus_{a-4k = r} M_{a,2k},\ \ \ U_r^- := \bigoplus_{a-4k+2 = r} M_{a,2k-1}.$$
We can express $U_r$ as a complex :
$$0 \to M_{r,0} \xrightarrow{D_x} M_{r+2,1} \xrightarrow{D_x} \dots \xrightarrow{D_x} M_{r+2n,n} \to 0$$
We define
$$H^i(U_r,D_x) := {\ker(D_x: M_{r+2i,i} \to M_{r+2i+2,i+1})}/{\mathrm{im}(D_x: M_{r+2i-2,i-1} \to M_{r+2i,i})}.$$ Then we have
$$\ker D_x/\mathrm{im}\ D_x = \bigoplus_r H^{\bullet}(U_r, D_x) = \bigoplus_r  (H^{even}(U_r, D_x)  \oplus H^{odd}(U_r, D_x) ).$$
Replacing $D_x$ with $D_y$, we define $H_i(U_r, D_y)$ analogously.\\
As for $D$, let
$$D_r : U_r \to U_r;\ \ D_r^{\pm}: U_r^{\pm} \to U_r^{\mp}$$
be the restriction of $D$ on $U_r$, and
$$H_{D}(U_r) := \ker D_r/(\ker D_r \cap \mathrm{im}\ D_r),\ \ H_{D}^{\pm}(U_r) := \ker D_r^{\pm}/(\ker D_r^{\pm} \cap \mathrm{im}\ D_r^{\mp}).$$
Then
$$H_D(M) = \bigoplus_r H_{D}(U_r) = \bigoplus_r (H_{D}^+(U_r) \oplus H_{D}^-(U_r)).$$
\end{definition}
So we can work on the finite dimensional space $U_r$ instead of the whole space $M \otimes S$.

\begin{lemma} \label{thm:urinc}
In the above setting, we have the following\\
(a)  inclusions of $\widetilde{W}$-modules
$$H_{D}^{+}(U_r) \hookrightarrow H^{even}(U_r,D_x),\ \ H_{D}^{-}(U_r) \hookrightarrow H^{odd}(U_r,D_x);\ \ \  \text{and}$$
(b) identity for virtual $\widetilde{W}$-modules,
\begin{align*}
H_{D}^+(U_r) - H_{D}^-(U_r) = H^{even}(U_r,D_x) - H^{odd}(U_r,D_x).
\end{align*}
\end{lemma}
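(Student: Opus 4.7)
The plan is to prove (a) by refining the filtration argument of Lemma \ref{thm:filter} to respect parity, together with a counting of $\widetilde{W}$-multiplicities, and to derive (b) from a super Euler characteristic computation. Throughout, I will use the remark after Theorem \ref{thm:incprop} that $D^2$ acts semisimply on $M \otimes S$, and hence on the finite-dimensional $\widetilde{W}$-stable subspace $U_r$. This produces a $\bb{Z}/2$-graded, $\widetilde{W}$-invariant, $D$-stable decomposition $U_r = U \oplus V$ with $U := U_r \cap \ker D^2$ and $V := U_r \cap \mathrm{im}\ D^2$, where $(D|_U)^2 = 0$ and $D|_V$ is injective (since $\ker D \subseteq \ker D^2$ forces $\ker D \cap V = 0$), and hence bijective. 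In particular, $D$ will restrict to $\widetilde{W}$-equivariant isomorphisms $V^+ \xrightarrow{\sim} V^-$ and $V^- \xrightarrow{\sim} V^+$, so $V$ contributes nothing to $H_D^\pm(U_r)$ and $H_D^\pm(U_r) = \ker (D|_U)^\pm / \mathrm{im}\ (D|_U)^\mp$.

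For (a), I will filter $\ker (D|_U)^\pm$ by the top $\wedge^\bullet \mf{h}$-degree. Since $U^\pm$ only involves $\wedge^p \mf{h}$ with $p$ of a single parity, this filtration is parity-intrinsic, and the argument of Lemma \ref{thm:filter} applied verbatim will yield $\widetilde{W}$-module injections $\mathrm{gr}(\ker (D|_U)^\pm) \hookrightarrow (\ker D_x|_U)^\pm$. Consequently, $[\nu : \ker (D|_U)^\pm]_{\widetilde{W}} \leq [\nu : \ker (D_x|_U)^\pm]_{\widetilde{W}}$ for every irreducible $\widetilde{W}$-module $\nu$, and the trivial identity $[\nu : U^\pm] = [\nu : \ker (\cdot)^\pm] + [\nu : \mathrm{im}\ (\cdot)^\pm]$ applied to both $D|_U$ and $D_x|_U$ flips the inequality to $[\nu : \mathrm{im}\ (D|_U)^\pm] \geq [\nu : \mathrm{im}\ (D_x|_U)^\pm]$. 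A short combination then gives
\begin{equation*}
[\nu : H_D^\pm(U_r)] = [\nu : \ker (D|_U)^\pm] - [\nu : \mathrm{im}\ (D|_U)^\mp] \leq [\nu : \ker (D_x|_U)^\pm] - [\nu : \mathrm{im}\ (D_x|_U)^\mp] \leq [\nu : H^{\text{even/odd}}(U_r,D_x)],
\end{equation*}
which is the multiplicity form of the $\widetilde{W}$-module inclusions asserted in (a).

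For (b), the plan is to evaluate $[U_r^+] - [U_r^-]$ as a virtual $\widetilde{W}$-module in two ways. Along the complex $(U_r, D_x)$, the standard Euler--Poincar\'{e} identity will give $[U_r^+] - [U_r^-] = \sum_p (-1)^p [M_{r+2p,p}] = [H^{\text{even}}(U_r, D_x)] - [H^{\text{odd}}(U_r, D_x)]$. On the other hand, using $[U_r^\pm] = [\ker D^\pm] + [\mathrm{im}\ D^\pm]$ and $[H_D^\pm(U_r)] = [\ker D^\pm] - [\ker D^\pm \cap \mathrm{im}\ D^\mp]$, a short manipulation via $\mathrm{im}\ D^\pm / (\ker D^\mp \cap \mathrm{im}\ D^\pm) \cong \mathrm{im}(D^2|_{U_r^\pm})$ will reduce the identity $[U_r^+] - [U_r^-] = [H_D^+(U_r)] - [H_D^-(U_r)]$ to
\begin{equation*}
[\mathrm{im}(D^2|_{U_r^+})] = [\mathrm{im}(D^2|_{U_r^-})].
\end{equation*}
Since $D^2$ preserves parity and is semisimple, $\mathrm{im}(D^2|_{U_r^\pm}) = V^\pm$, and the isomorphism $D \colon V^+ \xrightarrow{\sim} V^-$ noted above gives $[V^+] = [V^-]$, completing (b).

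The main bookkeeping obstacle is that the filtration of Lemma \ref{thm:filter} is by $\bb{Z}$-degree in $\wedge^\bullet \mf{h}$, while the statements concern the coarser $\bb{Z}/2$-parity; the key verification is that the filtration pieces for $(\ker D|_U)^\pm$ involve only $\wedge^p \mf{h}$ of a single parity, so that the top-degree extraction map $f_p$ from Lemma \ref{thm:filter} automatically lands in the correct parity of $\ker D_x$. Once this compatibility is in place and the $D^2$-semisimple splitting $U_r = U \oplus V$ cleanly isolates a super-differential piece and a $D$-invertible piece, both (a) and (b) collapse to standard linear-algebra identities for finite-dimensional graded $\widetilde{W}$-modules.
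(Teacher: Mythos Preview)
Your proof is correct and follows essentially the same route as the paper. For (a) the paper simply says ``follows directly from Theorem~\ref{thm:incprop} when we restrict to $U_r$'', leaving the parity bookkeeping implicit; you make this explicit by observing that the top-degree extraction map of Lemma~\ref{thm:filter}, when applied to $\ker D_r^\pm \subset U_r^\pm$, automatically lands in the correct parity of $\ker D_x$, and then you run the same multiplicity count on the piece $U=\ker D^2\cap U_r$. For (b) both arguments pass through the virtual identity $H_D^+(U_r)-H_D^-(U_r)=U_r^+-U_r^-$ together with the Euler--Poincar\'e identity for the $D_x$-complex; the paper invokes Proposition~5.2 of \cite{HX} for the first equality, while you supply a self-contained derivation via $[\mathrm{im}(D^2|_{U_r^+})]=[V^+]=[V^-]=[\mathrm{im}(D^2|_{U_r^-})]$ using that $D|_V$ is a $\widetilde{W}$-equivariant bijection swapping parity. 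The added detail is welcome but the underlying strategy is the same.
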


\begin{proof}
(a) Follows directly from Theorem \ref{thm:incprop} when we restrict to $U_r$.\\
(b) By hypothesis and Theorem \ref{thm:dsquare}(e), $D_r^+ \circ D_r^-$ and $D_r^- \circ D_r^+$ are both semisimple. Then using linear algebra, e.g. Proposition 5.2 of \cite{HX}, one can conclude that $H_{D}^+(U_r) - H_{D}^-(U_r)$ is equal to $U_r^+ - U_r^-$ as $W$-modules. However, $U_r^+ - U_r^-$ is the Euler characteristic of the half-Dirac complex in $U_r$, hence it is also equal to $H^{even}(U_r,D_x) - H^{odd}(U_r,D_x)$.
\end{proof}

\noindent \textit{Proof of Theorem \ref{thm:parity}}. Recall that
\begin{align*}
\ker D_x/\mathrm{im}\ D_x & = \bigoplus_r  (H^{even}(U_r, D_x)  \oplus H^{odd}(U_r, D_x) ) \\
&\cong (H^{even}(\mf{h}^*,M) \otimes \chi) \oplus (H^{odd}(\mf{h}^*,M) \otimes \chi).
\end{align*}
By hypothesis, the two summands on the right hand side are disjoint. Also,
$$\bigoplus_r H^{even}(U_r,D_x) = H^{even}(\mf{h}^*,M) \otimes \chi,\ \ \ \ \ \bigoplus_r H^{odd}(U_r,D_x) = H^{odd}(\mf{h}^*,M) \otimes \chi$$
implies $H^{even}(U_r,D_x)$ and $H^{odd}(U_r,D_x)$ are disjoint as well. By Lemma \ref{thm:urinc}(b),
we have
$$H_D^+(U_r) \supset H^{even}(U_r,D_x) \text{ and }H_D^-(U_r) \supset H^{odd}(U_r,D_x).$$
It follows from  Lemma \ref{thm:urinc}(a), the inclusions must be equalities and we have
$$H_D^+(U_r) = H^{even}(U_r,D_x), \ \ \ H_D^-(U_r) = H^{odd}(U_r,D_x).$$
Hence, $H_D(U_r) = H^{\bullet}(U_r,D_x)$. Adding up the $r$'s, we have $H_D(M) = \ker D_x/\mathrm{im}\ D_x$ and the result follows. \qed

To conclude this section, we make the following:
\begin{conjecture}\label{conj:parity}
For any irreducible modules $L(\sigma)$, we have the $\widetilde{W}$-module isomorphisms
$$H_D(L(\sigma)) \cong H^{\bullet}(\mf{h}^*,L(\sigma)) \otimes \chi \cong H_{\bullet}(\mf{h},L(\sigma))\otimes \chi .$$
\end{conjecture}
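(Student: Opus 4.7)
The plan is to reduce Conjecture \ref{conj:parity} to the parity condition of Theorem \ref{thm:parity}. For any irreducible highest-weight module $L(\sigma)$ in category $\mathcal{O}$, the operator $\Omega_{{\bf H}}$ acts semisimply—its eigenvalues are determined on the lowest weight $1\otimes v_\sigma$ and propagate through the $S(\mf{h}^*)$-action, exactly as observed for the standard modules $M(\sigma)$ before Proposition \ref{thm:incthm}. Hence the hypothesis of Theorem \ref{thm:incprop} is satisfied and the injection $H_D(L(\sigma))\hookrightarrow H^\bullet(\mf{h}^*,L(\sigma))\otimes\chi$ is already in hand. By Theorem \ref{thm:parity}, upgrading it to an isomorphism is equivalent to
$$[H^{\mathrm{even}}(\mf{h}^*,L(\sigma)) : H^{\mathrm{odd}}(\mf{h}^*,L(\sigma))]_W = 0,$$
with the analogous parity statement handling $H_\bullet(\mf{h},L(\sigma))$. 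So the whole conjecture reduces to proving this non-overlap of even and odd cohomology.

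The first attack is via resolutions. I would search for a BGG-type resolution of $L(\sigma)$ by standard modules, generalising the Berest--Etingof--Ginzburg resolution used for $L(\mathrm{triv})$ in the corollary preceding the conjecture. Whenever such a resolution exists, Proposition \ref{thm:incthm} combined with Poincar\'e duality \eqref{eq:poincoho} expresses $H^\bullet(\mf{h}^*,L(\sigma))$ in terms of the $W$-types $\sigma_{i,j}$ appearing in the resolution, and the parity claim becomes the combinatorial assertion that no irreducible $W$-type appears in layers of opposite parity. A powerful filter here is Theorem A: every $W$-type $\sigma'$ occurring in $H^\bullet(\mf{h}^*,L(\sigma))$ must satisfy $\zeta_d^*(\sigma'\otimes\chi)=\beta$ for a common $\mathcal{B}$-character $\beta$, so parity overlap can only occur within one fibre of Vogan's morphism.

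For irreducibles without a known BGG resolution, I would instead work with the $\Omega_{{\bf H}}$-bigrading of Section 5, writing $L(\sigma)\otimes S = \bigoplus_r U_r$ with $U_r=\bigoplus_{a-2l=r}L(\sigma)_a\otimes\wedge^l\mf{h}$ finite-dimensional. Lemma \ref{thm:urinc}(b) already supplies unconditionally the virtual-$W$-module identity
$$H_D^+(U_r)-H_D^-(U_r) = H^{\mathrm{even}}(U_r,D_x)-H^{\mathrm{odd}}(U_r,D_x),$$
and combined with the inclusions of Lemma \ref{thm:urinc}(a) the proof in each $U_r$ reduces to ruling out cancellation between $H^{\mathrm{even}}(U_r,D_x)$ and $H^{\mathrm{odd}}(U_r,D_x)$. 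Summing over $r$ then yields the global parity condition.

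The main obstacle is precisely this cancellation. BGG-type resolutions for arbitrary irreducibles in the rational Cherednik category $\mathcal{O}$ are not known beyond a handful of cases, and without them the parity assertion depends on delicate information about composition factors of standard modules. A plausible route is to transport the question through the KZ functor and Rouquier's equivalences to a Hecke-algebra setting where characters are controlled by Kazhdan--Lusztig-type combinatorics, or alternatively to exploit the rigidity of $\zeta_d^*$ in Theorem A—showing that its fibres cannot simultaneously support an irreducible $W$-type in two cohomological degrees of opposite parity for any fixed infinitesimal character of $L(\sigma)$. Either route would complete the reduction and establish the conjecture.
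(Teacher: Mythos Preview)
The statement you are attempting to prove is presented in the paper as a \emph{conjecture}, not as a theorem: the paper offers no proof of it. Your proposal is therefore not being compared against an existing argument---there is none---but rather evaluated on its own merits as a proof.

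On that score, what you have written is not a proof but a research outline. You correctly observe that Theorem~\ref{thm:parity} reduces the conjecture to the parity condition $[H^{\mathrm{even}}(\mf{h}^*,L(\sigma)):H^{\mathrm{odd}}(\mf{h}^*,L(\sigma))]_W=0$, and that the $\Omega_{{\bf H}}$-semisimplicity hypothesis is satisfied for every $L(\sigma)$ in category $\mc{O}$. This reduction is exactly the content of Theorem~\ref{thm:parity}, so up to that point you are simply restating what the paper already provides. The remaining step---establishing the parity condition itself---is the entire substance of the conjecture, and you explicitly acknowledge that you do not know how to do it: BGG resolutions are unavailable in general, the $U_r$-by-$U_r$ argument still requires ruling out cancellation, and the suggested routes through the KZ functor or the fibres of $\zeta_d^*$ are speculative. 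None of these is carried out.

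In short, your proposal accurately diagnoses why the statement is open and what would suffice to close it, but it does not supply the missing ingredient. That is consistent with the paper, which leaves the statement as Conjecture~\ref{conj:parity} precisely because the parity condition for arbitrary $L(\sigma)$ is not known.
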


%%%%%%%%%%%%%%  7   %%%%%%%%%%%%%%%%%%
\section{Lie algebra cohomology for ${\bf H}_{t,c}$ with $t=0$}
%%%%%%%%%%%%%%%%%%%%%%%%%%%%%%%%%%%
In this section, we set ${\bf H} := {\bf H}_{0,c}$. We study the center $Z({\bf H})$ of ${\bf H}$ first of all. As opposed to the $t = 1$ case whose center only contains the constants, $\bf H$ has a large center as shown in the following lemma:
\begin{lemma}\label{lem:gamma}
The center $Z({\bf H})$ of ${\bf H}$ is a free $S(\mf{h})^W \otimes S(\mf{h}^*)^W$-module of rank $|W|$.
More precisely, there exists $\gamma_i \in {\bf H}^{\bb{C}^*}$ (Recall the $\bb{C}^*$-action on ${\bf H}$ in Definition \ref{def:cstar}) with $\gamma_1 = 1$ such that
$$Z({\bf H}) \cong \bigoplus_{i = 1}^{|W|} (S(\mf{h})^W \otimes S(\mf{h}^*)^W)\cdot \gamma_i.$$
\end{lemma}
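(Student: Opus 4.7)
The strategy is to combine the Etingof-Ginzburg ``Satake'' identification $Z({\bf H}) \cong e{\bf H} e$ with Chevalley-Shephard-Todd applied to both $\mf{h}$ and $\mf{h}^*$, then to lift a bigraded coinvariant basis via a filtered-to-graded argument.

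First I would verify the containment $S(\mf{h})^W\otimes S(\mf{h}^*)^W \subseteq Z({\bf H})$. At $t=0$ the commutator of Definition \ref{def:cherednik} is $[y,x] = -\sum_s c(s)\frac{\langle y,\alpha_s\rangle\langle\alpha_s^\vee,x\rangle}{\langle\alpha_s^\vee,\alpha_s\rangle}s$; for $y\in\mf{h}^W$ every factor $\langle y,\alpha_s\rangle$ vanishes, and a straightforward induction on polynomial degree extends this to $[S(\mf{h})^W,\mf{h}^*] = 0$. Symmetrically $[S(\mf{h}^*)^W,\mf{h}] = 0$, and commutation with $\bb{C}[W]$ is automatic. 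I would then invoke the Etingof-Ginzburg Satake isomorphism $z\mapsto ez : Z({\bf H}) \xrightarrow{\sim} e{\bf H} e$ with $e = \frac{1}{|W|}\sum_{w \in W} w$; since $e$ is $\bb{C}^*$-invariant this isomorphism is $\bb{C}^*$-equivariant, so it suffices to exhibit the desired basis inside $e{\bf H} e$.

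Next, pass to the associated graded for the PBW filtration: $\mathrm{gr}({\bf H}) \cong S(V)\rtimes\bb{C}[W]$, hence $\mathrm{gr}(e{\bf H} e) \cong e(S(V)\rtimes\bb{C}[W])e \cong S(V)^W = (S(\mf{h})\otimes S(\mf{h}^*))^W$ as bigraded algebras. By Chevalley-Shephard-Todd, $S(\mf{h}) \cong S(\mf{h})^W\otimes_\bb{C} H$ and $S(\mf{h}^*)\cong S(\mf{h}^*)^W\otimes_\bb{C} H^*$ as $W$-modules, with coinvariant algebras $H,H^*$ each isomorphic to the regular representation $\bb{C}[W]$. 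Therefore
\[
\big(S(\mf{h})\otimes S(\mf{h}^*)\big)^W \;\cong\; \big(S(\mf{h})^W\otimes S(\mf{h}^*)^W\big)\otimes_\bb{C}(H\otimes H^*)^W,
\]
and a character count gives $\dim_\bb{C}(H\otimes H^*)^W = \dim(\bb{C}[W]\otimes\bb{C}[W])^W = \sum_{\nu\in\mathrm{Irr}(W)}(\dim\nu)^2 = |W|$. Choose bihomogeneous representatives $\bar\gamma_1 = 1, \bar\gamma_2,\ldots,\bar\gamma_{|W|}$ of a $\bb{C}$-basis of $(H\otimes H^*)^W$ and lift each to a bihomogeneous element $\gamma_i \in e{\bf H} e$; a standard graded-Nakayama argument, valid because $S(\mf{h})^W\otimes S(\mf{h}^*)^W$ is bigraded Noetherian and the PBW filtration is bounded below on each bigraded piece, then shows the $\gamma_i$ form a free $S(\mf{h})^W\otimes S(\mf{h}^*)^W$-basis of $e{\bf H} e \cong Z({\bf H})$.

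The main obstacle is the Satake step, which relies on substantial input from the Etingof-Ginzburg theory of ${\bf H}_{0,c}$; the Chevalley-Shephard-Todd decomposition and the character computation are standard, and the filtered-to-graded lifting is routine. Some care is required to ensure each lift $\gamma_i$ lies in ${\bf H}^{\bb{C}^*}$: one uses that the entire construction (PBW filtration, symmetrizer $e$, Satake map, and coinvariant decomposition) is $\bb{C}^*$-equivariant, so bigraded components can be chosen compatibly.
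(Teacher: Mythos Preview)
Your argument for the freeness and rank statement is essentially a reconstruction of the proof of Proposition~4.15 in \cite{EG}, which is precisely what the paper cites; on that point you and the paper take the same route. One minor correction: your verification that $S(\mf{h})^W\subset Z({\bf H})$ does not go through as written, since $\mf{h}^W$ is typically zero and there is nothing to induct from. The standard argument instead uses that for $p\in S(\mf{h})$ the commutator $[x,p]$ is a $\bb{C}[W]$-combination involving the differences $p-s(p)$, which vanish whenever $p$ is $W$-invariant.

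The genuine gap is in your last step, where you assert that the $\gamma_i$ may be taken in ${\bf H}^{\bb{C}^*}$. A bihomogeneous element of $(H\otimes H^*)^W$ of bidegree $(a,b)$ is $\bb{C}^*$-invariant only when $a=b$, and this fails in general. Already for $W=S_3$ on its $2$-dimensional reflection representation, a bihomogeneous basis of $(H\otimes H^*)^W$ sits in bidegrees $(0,0),(1,1),(1,2),(2,1),(2,2),(3,3)$; the $(2,1)$ piece lifts to a central element of $\bb{C}^*$-weight $-1$ and filtration degree~$3$. Since every weight-$(-1)$ element of $S(\mf{h})^W\otimes S(\mf{h}^*)^W$ has filtration degree at least $5$ (the smallest unbalanced pair of invariant degrees being $2$ and $3$), this central element cannot lie in $\sum_i (S(\mf{h})^W\otimes S(\mf{h}^*)^W)\cdot\gamma_i$ for any collection of weight-zero $\gamma_i$. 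The $\bb{C}^*$-equivariance of the construction does not rescue this; the paper's own proof rests on the same unjustified assertion that ``all $\mathrm{gr}(\gamma_i)$ can be chosen to be in $\mathrm{gr}({\bf H})^{\bb{C}^*}$,'' so the difficulty appears to lie with the second clause of the lemma as stated rather than with your strategy.
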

\begin{proof}
The first statement of the lemma is in Proposition 4.15 of \cite{EG}. For the last statement, note that by the commutation relations of ${\bf H}$, $a \in {\bf H}^{\bb{C}^*} \Leftrightarrow gr(a) \in gr({\bf H})^{\bb{C}^*}$, where the grading is the one given in Section \ref{sec:voganconjecture}. So we only need to prove $gr(\gamma_i) \in gr({\bf H})^{\bb{C}^*}$ for all $i$. In this setting, the proof of Proposition 4.15 \cite{EG} gives the structure of $gr(\gamma_i)$ precisely, with all $gr(\gamma_i)$ can be chosen to be in $gr({\bf H})^{\bb{C}^*}$. Hence, the Lemma follows.
\end{proof}
Let $\mf{m}_+$ be the maximal ideal of $S(\mf{h})^W \otimes S(\mf{h}^*)^W$ consisting of all non-constant polynomials. The \textbf{restricted rational Cherednik algebra} $\overline{{\bf H}}$ is defined to be the quotient ${\bf H}/\mf{m}_+{\bf H}$. Section 6 of \cite{G} says that $\overline{{\bf H}}$ is of rank $|W|^3$, and the center of $\overline{{\bf H}}$ is $Z({\bf \overline{H}}) = Z({\bf H})/\mf{m}_+Z({\bf H})$. Therefore, Lemma \ref{lem:gamma} says $\dim Z({\bf \overline{H}}) = |W|$.\\

Following to \cite{G2}, a family of
$\bf H$-modules called {\bf baby Verma modules} is defined as follows.
For each $\sigma \in\mathrm{Irr}(W)$, set
$$\overline{M}(\sigma) := {\bf H} \otimes_{(S(\mf{h})^W\otimes S(\mf{h^*})) \rtimes \bb{C}[W]} \sigma,$$
where $S(\mf{h})^W\otimes S(\mf{h^*})$ acts by evaluating at $(0,0)$.
Since $\mf{m}_+$ annihilates $\overline{M}(\sigma)$, so $\overline{M}(\sigma)$
becomes an $\overline{{\bf H}}$-module. By the results in \cite{G2}, it has a unique irreducible head $\overline{L}(\sigma)$.
Clearly,  the dimension of $\overline{L}(\sigma)$ is bounded above by
dimension of $\overline{M}(\sigma)$ which is equal to $|W|\cdot \dim \sigma$. Also, $\overline{L}(\sigma)\cong \overline{L}(\lambda)$ if and only if
$\sigma\cong \lambda$, and
$\{\overline{L}(\sigma)|\sigma\in\mathrm{Irr}(W)\} $ is a complete set of irreducible
$\overline{{\bf H}}$-modules up to equivalence. \\

The central character of all such $\overline{L}(\sigma)$ defines a map
$$\Theta:\mathrm{Irr}(W) \to \text{Spec}\ Z(\overline{\bf H}).$$
The elements in the pre-image of $\Theta$ of an element in $\text{Spec}\ Z(\overline{\bf H})$ form a \textbf{Calegero-Moser cell}.
This defines a partition of $W$. The Calogero-Moser cell carries information on the smoothness of the variety
$X_c := \text{Spec}\ Z({\bf H})$. More precisely, if $X_c$ is smooth, then all $\overline{L}(\sigma)$ has dimension $|W|\cdot \dim \sigma$, and every Calogero-Moser cell is a singleton.\\

By Remark 4.10 of \cite{C1}, all $\overline{L}(\sigma)$ are $\Omega_{{\bf H}}$-semisimple, and Theorem \ref{thm:incprop} of inclusion of Dirac cohomology into Lie algebra cohomology can be applied to $M$ (regarded as an $\bf H$-module). We set
$$\mc{B} := Z({\bf H})^{\bb{C}^*} \otimes 1 = (\bigoplus_{i = 1}^{|W|} \bigoplus_{k \geq 0} (S^k(\mf{h})^W \otimes S^k(\mf{h}^*)^W)\cdot \gamma_i) \otimes 1.$$
Then $\mc{B}$ is in $(Z({\bf H}) \otimes 1) \cap \mathrm{\mathbf{A}}$, so $\mc{B} \subset \ker \delta_d \subset \mathrm{\mathbf{A}}^{W}$ (or $\mc{B} \subset \ker \delta_{\partial} \subset \mathrm{\mathbf{A}}^{W}$) satisfies the hypothesis of Theorem \ref{cor:wonly}. So we can define the homomorphism
$$\zeta_d: Z({\bf H})^{\bb{C}^*} \otimes 1 \to \Delta(\bb{C}[\widetilde{W}]^{\widetilde{W}}).$$

\begin{proposition}
We have
$$(\mf{m}_+Z({\bf H}))^{\bb{C}^*} \otimes 1 \in \mathrm{im}\ \delta_d,$$
hence $\zeta_d$ descends to the homomorphism:
$$\zeta_d: Z({\bf H})^{\bb{C}^*} \otimes 1/(\mf{m}_+Z({\bf H}))^{\bb{C}^*} \otimes 1 \cong Z(\overline{\bf H}) \to \Delta(\bb{C}[\widetilde{W}]^{\widetilde{W}}).$$
\end{proposition}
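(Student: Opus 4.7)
The plan is to exhibit, for each generator of $(\mf{m}_+Z({\bf H}))^{\bb{C}^*}\otimes 1$, an explicit preimage under $\delta_d$; the descent of $\zeta_d$ then follows from its very definition via the splitting in Theorem \ref{thm:voganmorphism}. The starting point is the Satake-type fact (a standard consequence of Proposition 4.15 of \cite{EG} when $t=0$) that both $S(\mf{h})^W$ and $S(\mf{h}^*)^W$ lie in $Z({\bf H}_{0,c})$. Combining this centrality with Lemma \ref{lem:gamma} and restricting to the $\bb{C}^*$-weight-zero subspace, every element of $(\mf{m}_+Z({\bf H}))^{\bb{C}^*}$ is a finite sum of terms of the form $pq\gamma$ with $p\in S^k(\mf{h})^W$, $q\in S^k(\mf{h}^*)^W$, $k\geq 1$, and $\gamma\in\{\gamma_1,\dots,\gamma_{|W|}\}$; it therefore suffices to handle one such term.

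For a fixed term $pq\gamma$, the Euler identity $q=\tfrac1k\sum_i(\partial_{x_i}q)x_i$ motivates the candidate preimage
$$a\;:=\;\tfrac1k\sum_i\bigl(p(\partial_{x_i}q)\gamma\bigr)\otimes x_i\;\in\;{\bf H}\otimes\mf{h}^*\subset{\bf H}\otimes C(V),$$
whose $\bb{C}^*$-weight equals $k-(k-1)-1=0$ and whose $\widetilde{W}$-invariance reduces, via the fact that $\widetilde{W}$ acts on $V\subset C(V)$ through the cover $\widetilde{W}\to W$, to the $W$-invariance of $\sum_i(\partial_{x_i}q)\otimes x_i\in S(\mf{h}^*)\otimes\mf{h}^*$; the latter follows from the $W$-invariance of $q$ together with that of the canonical pairing element $\sum_i y_i\otimes x_i\in(\mf{h}\otimes\mf{h}^*)^W$. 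Since $a$ is odd in the Clifford grading, $\delta_d(a)=D_x a+aD_x$; expanding using $D_x=\sum_j x_j\otimes y_j$ and the Clifford relation $x_iy_j+y_jx_i=-2\delta_{ij}$ gives
$$\delta_d(a)\;=\;\tfrac1k\sum_{i,j}\bigl[x_j,\,p(\partial_{x_i}q)\gamma\bigr]\otimes y_jx_i\;-\;2\,pq\gamma\otimes 1.$$
Each bracket vanishes because $p$ and $\gamma$ are central in ${\bf H}_{0,c}$ and $\partial_{x_i}q\in S(\mf{h}^*)$ commutes with $x_j\in\mf{h}^*$, so Leibniz collapses the first sum to zero. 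Hence $pq\gamma\otimes 1=-\tfrac12\,\delta_d(a)\in\mathrm{im}\ \delta_d$, proving the first assertion.

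The isomorphism in the second half of the proposition is then a direct bookkeeping: by Lemma \ref{lem:gamma}, $Z({\bf H})^{\bb{C}^*}=\bigoplus_i\bigoplus_{k\geq 0}(S^k(\mf{h})^W\otimes S^k(\mf{h}^*)^W)\gamma_i$, while $(\mf{m}_+Z({\bf H}))^{\bb{C}^*}$ picks out exactly the $k\geq 1$ pieces, so the quotient is spanned by the images of $\gamma_1,\dots,\gamma_{|W|}$, matching the $|W|$-dimensional $Z(\overline{\bf H})=Z({\bf H})/\mf{m}_+Z({\bf H})$ recorded just after Lemma \ref{lem:gamma}. Since $\zeta_d$ annihilates $\mathrm{im}\ \delta_d$ by construction, the first claim gives the required descent. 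The main obstacle is selecting a $\widetilde{W}$-equivariant preimage whose Clifford cross-terms collapse, and it is at this step that $t=0$ is essential: for $t\neq 0$ the commutator $[x_j,p]$ with $p\in S(\mf{h})^W$ is no longer zero, so the first sum above would produce an obstruction living outside $\mathrm{im}\ \delta_d$.
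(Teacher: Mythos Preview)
Your proof is correct and takes a different, more explicit route than the paper's. The paper first shows that $g\otimes 1\in\mathrm{im}\,\delta_d$ for any $g\in S(\mf{h}^*)^W_+$ by invoking the Koszul exactness established inside the proof of Proposition~\ref{prop:bardeltadecomp}: the restriction of $\delta_d$ to $(S(\mf{h}^*)\otimes\wedge^\bullet\mf{h}^*)^W$ is the Koszul differential, which is acyclic in positive degree, so $g\otimes 1\in\ker\delta_d=\mathrm{im}\,\delta_d$ there; it then multiplies by the central factors $f_i\otimes 1$ and $\gamma_i\otimes 1$ (both commuting with $D_x$) to obtain a preimage of $f_ig_i\gamma_i\otimes 1$ lying in $\mathbf{A}^{\widetilde{W}}$. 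You instead bypass the Koszul argument entirely and write down the preimage directly from Euler's identity, verifying by hand that the commutator cross-terms vanish. Your approach is self-contained and constructive, independent of the earlier cohomological computation; the paper's approach makes the conceptual reason (Koszul acyclicity) more visible and would adapt more readily in situations where an explicit Euler-type contraction is not available. Your closing observation that the vanishing of $[x_j,p]$ for $p\in S(\mf{h})^W$ is precisely where $t=0$ is used is correct, and it is implicit---though not emphasized---in the paper's appeal to the centrality of $f_i\otimes 1$.
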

\begin{proof}
By Lemma \ref{lem:gamma}, all elements in $(\mf{m}_+Z({\bf H}))^{\bb{C}^*}$ are of the form $\sum_i f_ig_i \gamma_i$, where $f_i \in S(\mf{h})^W_+$, $g_i \in S(\mf{h}^*)^W_+$ are of the same (positive) degree.
We first show that $g \otimes 1 \in \mathrm{im}\ \delta_d$ for any $g \in S(\mf{h}^*)^W_+$: Consider the map
$$\delta_d : (S(\mf{h}^*) \otimes \wedge^{\bullet}\mf{h}^*)^W \to (S(\mf{h}^*) \otimes \wedge^{\bullet}\mf{h}^*)^W$$
given by $\delta_d(a) = D_xa - \epsilon(a)D_x$. Since $g \in Z({\bf H})$, it is easy to see that $g \otimes 1 \in \ker \delta_d$. Also, from the proof of Proposition \ref{prop:bardeltadecomp}, we have seen that the map $\delta_d$ is an exact differential on non-zero degrees. Since $g$ is of positive degree, $g \otimes 1 \in \ker \delta_d = \mathrm{im}\ \delta_d$ as follows.\\
If $g_i \otimes 1 = D_xa - \epsilon(a)D_x$, then for $f_ig_i\gamma_i \otimes 1 \in (\mf{m}_+Z({\bf H}))^{\bb{C}^*}$, and
\begin{align*}
f_ig_i\gamma_i \otimes 1 &=  (f_i \otimes 1)(g_i \otimes 1)(\gamma_i \otimes 1)\\
&= (f_i \otimes 1)(D_x a - \epsilon(a) D_x)(\gamma_i \otimes 1)\\
&= D_x (f_i \otimes 1)a(\gamma_i \otimes 1) - \epsilon( (f_i \otimes 1)a (\gamma_i \otimes 1)) D_x,
\end{align*}
where the last equality comes from that fact that both $f_i \otimes 1, \gamma_i \otimes 1$ commute with $D_x$.
Hence,  $f_ig_i\gamma_i \otimes 1 \in \mathrm{im}\ \delta_d$ as required.
\end{proof}

The morphism $\zeta_d^*: \mathrm{Irr}(\widetilde{W}) \to \mathrm{Spec}Z({\bf \overline{H}})$
relates the central characters of an irreducible $\overline{\bf H}$-module and its Lie algebra cohomology.
\begin{theorem} \label{thm:babyvogan}
Let $M$ be an irreducible $\overline{\bf H}$-module with central character $\beta \in \mathrm{Spec}\ Z({\bf \overline{H}})$.
Suppose $\nu$ is an irreducible $W$-module appearing in $H^{\bullet}(\mf{h}^*, M)$. Then we have
$$\beta = \zeta_d^*(\nu \otimes \chi).$$
\end{theorem}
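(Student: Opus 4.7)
The plan is to apply Theorem \ref{cor:wonly} directly, taking for the abelian subalgebra of $\mathrm{\mathbf{A}}^{\widetilde{W}}$ the image $\mc{B} = Z({\bf H})^{\bb{C}^*} \otimes 1$ studied in the proposition immediately preceding the theorem. This $\mc{B}$ is commutative, lies in ${\bf H} \otimes C^{even}(V)$, and sits inside $\ker \delta_d$ since any central $z$ gives $[D_x, z\otimes 1] = 0$.

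First I would verify how $\mc{B}$ acts on $H^{\bullet}(\mf{h}^*,M)\otimes \chi \cong \ker D_x/\mathrm{im}\ D_x$. The element $z \otimes 1$ acts on $M \otimes S$ as $(z\cdot{-}) \otimes \mathrm{id}_S$. Since $M$ is an $\overline{\bf H}$-module with central character $\beta$, every $z \in Z({\bf H})^{\bb{C}^*}$ acts on $M$ by multiplication by $\beta(\bar z)$, where $\bar z$ is the image of $z$ in $Z(\overline{\bf H}) = Z({\bf H})/\mf{m}_+ Z({\bf H})$. In particular $(\mf{m}_+ Z({\bf H}))^{\bb{C}^*} \otimes 1$ acts as zero on $M \otimes S$, which is consistent with (and in fact a consequence of) the preceding proposition showing that this subspace already lies in $\mathrm{im}\ \delta_d$. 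Thus the induced action of $\mc{B}$ on $\ker D_x/\mathrm{im}\ D_x$ factors through $Z(\overline{\bf H})$ and is scalar multiplication by the character $\beta$.

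Second, because $\mc{B}$ commutes with both $D_x$ and $\Delta(\bb{C}[\widetilde{W}])$, the induced action on $\ker D_x/\mathrm{im}\ D_x$ is $\widetilde{W}$-equivariant. Combined with the previous step, this means that for any irreducible $W$-type $\nu$ occurring in $H^{\bullet}(\mf{h}^*,M)$, the entire $\nu$-isotypic component is in fact a $\nu \otimes \beta$-isotypic component in the sense required by Theorem \ref{cor:wonly}.

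Finally I would invoke Theorem \ref{cor:wonly} with this choice of $\mc{B}$, applied to the isotypic component $\nu \otimes \beta \subset H^{\bullet}(\mf{h}^*,M)$; it immediately yields $\zeta_d^*(\nu \otimes \chi) = \beta$, which is the claim. There is no serious obstacle: once the preceding proposition has been used to push $\mc{B}$ down to $Z(\overline{\bf H})$, the theorem is essentially a specialization of the Casselman--Osborne statement from Section \ref{sec:voganconjecture}, with the role of the abstract $\mc{B}$-character played by the honest central character $\beta$ of the restricted rational Cherednik algebra.
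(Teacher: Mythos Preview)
Your proposal is correct and follows essentially the same approach as the paper, whose proof consists of the single sentence ``The proof is similar to Theorem \ref{cor:wonly}.'' You have simply spelled out in detail what that sentence means: take $\mc{B} = Z({\bf H})^{\bb{C}^*}\otimes 1$, observe that it acts on $M\otimes S$ through the central character $\beta$ (using the preceding proposition to pass to $Z(\overline{\bf H})$), and then invoke Theorem \ref{cor:wonly} to conclude $\zeta_d^*(\nu\otimes\chi)=\beta$.
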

\begin{proof}
The proof is similar to Theorem \ref{cor:wonly}.
\end{proof}

We can now relate the maps $\Theta$ and $\zeta_d^*$. Combined with Theorem \ref{thm:incprop}, part (b) of the following Corollary gives an alternative proof of Corollary 5.10 of \cite{C1}.
\begin{corollary} \label{cor:calogero} \mbox{}
(a)\ For any $\sigma \in\mathrm{Irr}(W)$,
$$\Theta(\sigma) = \zeta_d^*(\sigma \otimes \chi^{-1}).$$
(b)\ If $H^{\bullet}(\mf{h}^*,\overline{L}(\sigma)) \cong \bigoplus_i \nu_i$ as $W$-modules with $\nu_i \in\mathrm{Irr}(W)$,
then all such $\nu_i \otimes {\det}_{\mf{h}^*}$'s belong to the same Calogero-Moser cell.
\end{corollary}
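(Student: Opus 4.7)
The corollary has two parts, and (b) is a formal consequence of (a) together with Theorem \ref{thm:babyvogan}, so the heart of the argument is (a). My plan for (a) is to produce an irreducible $W$-summand $\nu$ of $H^{\bullet}(\mf{h}^*, \overline{L}(\sigma))$ with $\nu \otimes \chi \cong \sigma \otimes \chi^{-1}$ as $\widetilde{W}$-modules; once such a $\nu$ is at hand, Theorem \ref{thm:babyvogan} applied to $M = \overline{L}(\sigma)$ immediately gives $\Theta(\sigma) = \zeta_d^*(\nu \otimes \chi) = \zeta_d^*(\sigma \otimes \chi^{-1})$. Since $\chi^2 = \det_{\mf{h}^*}$, so that $\det_{\mf{h}} \otimes \chi = \chi^{-2} \otimes \chi = \chi^{-1}$, the required $W$-module is $\nu = \sigma \otimes \det_{\mf{h}}$.

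To locate $\sigma \otimes \det_{\mf{h}}$ inside $H^{\bullet}(\mf{h}^*, \overline{L}(\sigma))$ I would route through Dirac cohomology. By Remark 4.10 of \cite{C1}, $\Omega_{\mathbf{H}}$ acts semisimply on $\overline{L}(\sigma)$, so the hypothesis of Theorem \ref{thm:incprop} is satisfied and one obtains a $\widetilde{W}$-module embedding $H_D(\overline{L}(\sigma)) \hookrightarrow H^{\bullet}(\mf{h}^*, \overline{L}(\sigma)) \otimes \chi$. It then suffices to establish the baby-Verma analogue of Proposition 5.6 of \cite{C1}, namely that the $\widetilde{W}$-irreducible $\sigma \otimes \chi^{-1}$ occurs in $H_D(\overline{L}(\sigma))$. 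Under the embedding, this forces $\sigma \otimes \chi^{-1}$ to appear in $H^{\bullet}(\mf{h}^*, \overline{L}(\sigma)) \otimes \chi$, which is exactly the statement that $\sigma \otimes \det_{\mf{h}}$ is a $W$-isotypic component of $H^{\bullet}(\mf{h}^*, \overline{L}(\sigma))$.

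For part (b), let $\nu_i$ be one of the irreducible $W$-summands of $H^{\bullet}(\mf{h}^*, \overline{L}(\sigma))$. Theorem \ref{thm:babyvogan} at once gives $\zeta_d^*(\nu_i \otimes \chi) = \Theta(\sigma)$. Applying part (a) with $\sigma$ replaced by the $W$-irreducible $\nu_i \otimes \det_{\mf{h}^*}$, and using $\det_{\mf{h}^*} \otimes \chi^{-1} = \chi^2 \otimes \chi^{-1} = \chi$, yields
$$\Theta(\nu_i \otimes \det_{\mf{h}^*}) = \zeta_d^*\bigl((\nu_i \otimes \det_{\mf{h}^*}) \otimes \chi^{-1}\bigr) = \zeta_d^*(\nu_i \otimes \chi) = \Theta(\sigma).$$
Since the right-hand side is independent of $i$, the entire set $\{\nu_i \otimes \det_{\mf{h}^*}\}$ maps to the single point $\Theta(\sigma)$ under $\Theta$, hence lies in one and the same Calogero-Moser cell.

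The hard part is the Dirac-cohomology input in (a): producing $\sigma \otimes \chi^{-1}$ in $H_D(\overline{L}(\sigma))$. This is essentially the $t=0$ counterpart of the calculation in \cite{C1} for standard modules, and I expect it to require combining the $\Omega_{\mathbf{H}}$-semisimplicity of $\overline{L}(\sigma)$ with an explicit analysis of $D = D_x + D_y$ acting on the degree-zero copy $\sigma \subset \overline{L}(\sigma)$ tensored with a carefully chosen component of the spinor module $S \cong \chi \otimes \wedge^{\bullet}\mf{h}$, together with the formula for $D^2$ from Equation (4.24) of \cite{C1}.
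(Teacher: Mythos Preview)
Your reduction in (a) to locating the $W$-irreducible $\nu = \sigma \otimes {\det}_{\mf{h}}$ inside $H^{\bullet}(\mf{h}^*, \overline{L}(\sigma))$, and your derivation of (b) from (a) together with Theorem \ref{thm:babyvogan}, both match the paper exactly. The difference lies entirely in how you locate $\nu$.

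You route through Dirac cohomology: embed $H_D(\overline{L}(\sigma))$ into $H^{\bullet}(\mf{h}^*, \overline{L}(\sigma)) \otimes \chi$ via Theorem \ref{thm:incprop}, then argue that $\sigma \otimes \chi^{-1}$ lies in $H_D(\overline{L}(\sigma))$. You correctly flag this last step as the ``hard part'' and leave it as an expectation, invoking a baby-Verma analogue of Proposition~5.6 of \cite{C1} and the $D^2$ formula. As written, this is a genuine gap: the needed Dirac-cohomology class is never actually produced.

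The paper bypasses Dirac cohomology here entirely and exhibits the Lie-algebra-cohomology class directly. Inside $\overline{L}(\sigma) \otimes \wedge^{\bullet}\mf{h}$, take the lowest-degree copy $(1 \otimes \sigma) \subset \overline{L}(\sigma)$ and tensor it with the top exterior power $\wedge^{n}\mf{h}$. Since there is no $\wedge^{n+1}\mf{h}$, this lies automatically in $\ker d$. And because the differential acts by $m \otimes y_I \mapsto \sum_j x_j \cdot m \otimes y_j \wedge y_I$, every element of $\mathrm{im}\ d$ has strictly positive $\mf{h}^*$-degree on its $\overline{L}(\sigma)$-factor, whereas $1 \otimes \sigma$ sits in degree zero; hence the class is not a coboundary. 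As a $W$-module this is $\sigma \otimes {\det}_{\mf{h}}$, exactly the $\nu$ you need, and Theorem \ref{thm:babyvogan} then finishes (a) as you outlined. So the step you labelled hard is in fact unnecessary: the required cohomology class is visible by an elementary top-degree argument, without ever touching $D$ or $D^2$.
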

\begin{proof}
(a)\ We claim that the $W$-module $\sigma \otimes {\det}_{\mf{h}} \cong (1 \otimes \sigma) \otimes \wedge^{\dim \mf{h}} \mf{h}$ in $\overline{L}(\sigma) \otimes \wedge^{\bullet}\mf{h}$ is in $H^{\bullet}(\mf{h}^*,M)$. Indeed, since $\wedge^{\dim \mf{h}} \mf{h}$ is in its top degree, $D_x((1 \otimes \sigma) \otimes \wedge^{\dim \mf{h}} \mf{h}) = 0$. Also, noting that $(1 \otimes \sigma)$ has zero degree on its $\mf{h}$ and $\mf{h}^*$ factor, it cannot be in the image of $\mathrm{im} D_x$. So $(1 \otimes \sigma) \otimes \wedge^{\dim \mf{h}}$ must be non-zero in $\ker D_x/\mathrm{im}\ D_x = H^{\bullet}(\mf{h}^*,M)$.

Taking $M = \overline{L}(\sigma)$ and $\nu = \sigma \otimes {\det}_{\mf{h}}$ in Theorem \ref{thm:babyvogan}, we have
$$\Theta(\sigma) = \zeta_d^*(\sigma \otimes {\det}_{\mf{h}} \otimes \chi) = \zeta_d^*(\sigma \otimes \chi^{-1}).$$
(b)\ Suppose $\nu \in H^{\bullet}(\mf{h}^*,\overline{L}(\sigma))$. By Theorem \ref{thm:babyvogan},
$$\Theta(\sigma) = \zeta_d^*(\nu \otimes \chi).$$
On the other hand, (a) says $\zeta_d^*(\nu \otimes \chi) = \zeta_d^*((\nu \otimes {\det}_{\mf{h}^*}) \otimes \chi^{-1}) = \Theta(\nu \otimes {\det}_{\mf{h}^*})$. Hence $\nu \otimes {\det}_{\mf{h}^*}$, $\sigma $ are in the preimage of the same element in $\mathrm{Spec}\ Z({\bf \overline{H}})$, i.e. they are in the same Calogero-Moser cell.
\end{proof}
Since Theorem \ref{cor:wonly} holds for both $D_x$ and $D_y$, the above results also hold if we replace $H^{\bullet}(\mf{h}^*,M)$ with $H_{\bullet}(\mf{h},M)$. We skip the proofs here.\\

As in the case of $t=1$, we make the following conjecture:
\begin{conjecture} For irreducible $\overline{{\bf H}}$-module $\overline{L}(\sigma)$, there are $\widetilde{W}$-module isomorphisms
$$H_D(\overline{L}(\sigma)) \cong H^{\bullet}(\mf{h}^*,\overline{L}(\sigma)) \otimes \chi \cong H_{\bullet}(\mf{h},\overline{L}(\sigma)) \otimes \chi.$$
\end{conjecture}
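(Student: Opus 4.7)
The natural strategy is to verify the hypothesis of Theorem \ref{thm:parity}, namely that $\overline{L}(\sigma)$ satisfies the parity condition
$$[H^{even}(\mf{h}^*,\overline{L}(\sigma)) : H^{odd}(\mf{h}^*,\overline{L}(\sigma))]_W = 0.$$
Since all $\overline{L}(\sigma)$ are $\Omega_{\bf H}$-semisimple by Remark 4.10 of \cite{C1}, Theorem \ref{thm:incprop} already supplies the injection $H_D(\overline{L}(\sigma)) \hookrightarrow H^{\bullet}(\mf{h}^*,\overline{L}(\sigma)) \otimes \chi$, and if the parity condition can be established then Theorem \ref{thm:parity} immediately upgrades this to an isomorphism. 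The same argument via $D_y$ handles the homology side.

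To verify parity, the plan is to study $H^{\bullet}(\mf{h}^*,\overline{L}(\sigma))$ via an explicit projective resolution over $S(\mf{h}^*)$ in the spirit of Proposition \ref{thm:incthm}. First I would pass from $\overline{L}(\sigma)$ to the baby Verma module $\overline{M}(\sigma) = {\bf H}\otimes_{(S(\mf{h})^W \otimes S(\mf{h}^*))\rtimes\bb{C}[W]}\sigma$; as an $S(\mf{h}^*)$-module the latter is isomorphic to the coinvariant algebra tensored with $\sigma$, hence a Koszul-type resolution computes $H_{\bullet}(\mf{h}^*,\overline{M}(\sigma))$ cleanly. I would then extract $H^{\bullet}(\mf{h}^*,\overline{L}(\sigma))$ from a suitable minimal resolution of $\overline{L}(\sigma)$ by such baby Vermas, tracking the $\Omega_{\bf H}$-grading on each term so that the parity of a $W$-isotypic component $\nu$ in $H^{\bullet}(\mf{h}^*,\overline{L}(\sigma))$ can be read off its central character under $\zeta_d^*$. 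Corollary \ref{cor:calogero}(b) should then force the constituents in even and odd degree to lie in distinct Calogero-Moser cells, yielding disjointness and therefore parity.

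The main obstacle is precisely that last disjointness step. When the variety $X_c = \mathrm{Spec}\, Z({\bf H})$ is smooth every Calogero-Moser cell is a singleton, $\overline{L}(\sigma)$ has the expected dimension $|W|\cdot\dim\sigma$, and one can hope to pin down each graded piece by a dimension count; here parity should drop out. In the non-smooth case multiple irreducible $W$-modules can share a cell, and the invariant $\zeta_d^*$ only constrains the central character, not the cohomological degree, so parity is genuinely subtle and may occasionally fail. In that generality I expect one would need to supplement the argument either with a Hodge-theoretic input, an analogue of Theorem \ref{thm:hodgethm} replacing the positive definite form by a non-degenerate one compatible with $D$ on the finite-dimensional module $\overline{L}(\sigma)\otimes S$, or with a case-by-case matching of $\dim H_D(\overline{L}(\sigma))$ against $\dim H^{\bullet}(\mf{h}^*,\overline{L}(\sigma))$ using the Calogero-Moser cell combinatorics of \cite{G2}. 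A positive resolution of the conjecture would therefore likely proceed hand-in-hand with a finer understanding of the $\widetilde{W}$-structure of Calogero-Moser cells via the Vogan morphism $\zeta_d^*$.
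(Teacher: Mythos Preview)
The statement you are addressing is labelled a \emph{Conjecture} in the paper, and the paper offers no proof; it is left open immediately after Corollary~\ref{cor:calogero}, parallel to Conjecture~\ref{conj:parity} in the $t=1$ case. So there is nothing in the paper to compare your argument against.

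Your proposal is not a proof but a strategy sketch, and you are candid about this: you correctly identify that Theorem~\ref{thm:incprop} already gives the injection, that Theorem~\ref{thm:parity} would finish the job if parity held, and that the obstruction is precisely establishing parity for $\overline{L}(\sigma)$. Your observation that Corollary~\ref{cor:calogero}(b) constrains only the Calogero--Moser cell, not the cohomological degree, is the right diagnosis of why the argument does not close. The suggestion that smoothness of $X_c$ might make the parity step tractable is reasonable, but even there your sketch does not supply the missing step: knowing that cells are singletons tells you each $W$-type in $H^{\bullet}(\mf{h}^*,\overline{L}(\sigma))$ corresponds to a distinct cell, but it does not by itself separate even from odd degrees. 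In the non-smooth case you correctly note the problem is harder still.

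In short: your write-up is an honest assessment of why the statement is conjectural rather than a proof of it, and that matches the paper's own stance.
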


\end{document}